\documentclass{amsproc}

\usepackage{amsmath}
\usepackage{amssymb}
\usepackage{rotating}
\usepackage{graphicx}
\usepackage[tableposition=below]{caption}
\usepackage{bm}
\DeclareGraphicsExtensions{.png,.pdf,.eps}
\usepackage[all,2cell,cmtip]{xy}
\usepackage{tikz}
\usepackage{xcolor}
\usepackage{longtable}
\usepackage{soul}
\newtheorem{theorem}{Theorem}[section]
\newtheorem{lemma}[theorem]{Lemma}
\newtheorem{corollary}[theorem]{Corollary}
\newtheorem{proposition}[theorem]{Proposition}

\theoremstyle{definition}

\newtheorem{definition}[theorem]{Definition}

\newtheorem{remark}[theorem]{Remark}
\newtheorem{setting}[theorem]{Setting}

\def\P{{\mathbb P}}
\def\Q{{\mathbb Q}}

\def\Z{{\mathbb Z}}

\def\cE{{\mathcal E}}
\def\cF{{\mathcal F}}

\def\cI{{\mathcal I}}

\def\cL{{\mathcal L}}

\def\cO{{\mathcal O}}

\def\tensor{\otimes}

\def\cOperatorname#1{\mathop{\rm #1}\nolimits}
\def\Pic{\cOperatorname{Pic}}

\def\deg{\cOperatorname{deg}}
\def\det{\cOperatorname{det}}

\begin{document}

\title[Weak Fano bundles of rank $2$ on $Q^4$]
{Rank-two weak Fano bundles on a four-dimensional \\
quadric hypersurface $Q^4$}

\author{Yuta Takahashi}
\date{\today}
\address{Department of Mathematics, Faculty of Science and Engineering, Chuo University.\newline
1-13-27 Kasuga, Bunkyo-ku, Tokyo 112-8551, Japan}
\email{yuta0630takahashi0302@gmail.com}
\subjclass[2020]{14J40, 14J45, 14J60, 14J70}
\keywords{Weak Fano bundle, quadric hypersurface, rank $2$ vector bundle, spinor bundle}

\begin{abstract}
We classify rank $2$ weak Fano bundles on a four-dimensional smooth quadric hypersurface $Q^4$. Up to twisting with a line bundle, such a bundle is either a split bundle, a spinor bundle, or one of the stable bundles with Chern classes $c_1=-1$ and $c_2=(1,1)$ constructed in \cite{APW94}.
\end{abstract}

\maketitle
\tableofcontents
\section{Introduction}

Let $X$ be a smooth projective variety over an algebraically closed field $k$ of characteristic zero. A vector bundle $\cE$ on $X$ is called a \emph{Fano bundle} if its projectivization $\P(\cE)$ is a Fano variety, equivalently if $-K_{\P(\cE)}$ is ample. This notion was introduced by Szurek and Wi\'{s}niewski in their study of ruled Fano varieties. They showed that the base of a Fano bundle is itself a Fano variety and studied rank $2$ Fano bundles on $\P^3$ and on a smooth quadric threefold $Q^3$ \cite{SW90a}. Related low-dimensional cases, including rank $2$ Fano bundles on surfaces and higher-rank Fano bundles on $\P^2$, were also investigated in \cite{SW90b,SW90c}. The classification of rank $2$ Fano bundles on $Q^3$ was completed by Sols, Szurek and Wi\'{s}niewski \cite{SSW91}. Subsequently, Ancona, Peternell and Wi\'{s}niewski classified rank $2$ Fano bundles on projective spaces and quadric hypersurfaces of dimension at least $4$ \cite{APW94}. More generally, Mu\~{n}oz, Occhetta and Sol\'{a} Conde studied rank $2$ vector bundles on smooth Fano varieties of Picard number one from the viewpoint of the nef and pseudoeffective cones of their projectivizations \cite{MOS14a}, and obtained a classification theorem for rank $2$ Fano bundles on smooth Fano varieties with $H^2(X,\Z)\simeq H^4(X,\Z)\simeq \Z$ \cite{MOS14b}.

Langer introduced a weak analogue in \cite{Lan98}. A vector bundle $\cE$ is called a \emph{weak Fano bundle} if $\P(\cE)$ is a weak Fano variety, equivalently if $-K_{\P(\cE)}$ is nef and big. Rank $2$ weak Fano bundles on projective spaces were studied by Yasutake \cite{Yas12}. In dimension three, Ishikawa classified rank $2$ weak Fano bundles on cubic threefolds \cite{Ish16}. Fukuoka, Hara and Ishikawa classified rank $2$ weak Fano bundles on del Pezzo threefolds of degree $4$ in \cite{FHI22} and of degree $5$ in \cite{FHI23}. In the latter paper, they also proved that such bundles on del Pezzo threefolds of degrees one and two split, thereby completing the classification for del Pezzo threefolds of Picard number one. More recently, they obtained a classification of rank $2$ weak Fano bundles on Fano threefolds of Picard number one, including detailed resolutions in the quadric threefold case \cite{FHI25}.

For quadric hypersurfaces, the classification of rank $2$ Fano bundles is covered by \cite{SW90a,SSW91,APW94}, but the weak Fano case requires additional arguments. In \cite{Tak26}, the author classified rank $2$ weak Fano bundles on smooth quadric hypersurfaces $Q^n$ of dimension $n\ge 5$: up to twisting with a line bundle, such a bundle is either a split bundle or the Cayley bundle on $Q^5$. The four-dimensional quadric hypersurface $Q^4$ is not covered by this result. 

The main new feature is that $A^2(Q^4)\simeq \mathbb Z\alpha\oplus\mathbb Z\beta$, where \(\alpha\) and \(\beta\) denote the classes of planes belonging to the two families of planes on \(Q^4\). Therefore, the second Chern class has two independent components. Consequently, the numerical reduction in the style of \cite{APW94} leaves more cases, and several of them must be excluded by restricting the bundles to planes on $Q^4$ and using the classification of linear-uniform bundles on quadrics due to Fritzsche \cite{Fri83}. The main result of this paper is the following theorem.

\begin{theorem}\label{4}
Let $\cE$ be a rank $2$ weak Fano bundle on a smooth quadric hypersurface $Q^4$. Let \(H_{Q^4}\) denote the hyperplane class on \(Q^4\). Then, up to twisting with a line bundle, $\cE$ is one of the following:
\begin{enumerate}
\item a direct sum of two line bundles;
\item one of the two spinor bundles on $Q^4$;
\item an $H_{Q^4}$-stable bundle with $c_1(\cE)=-H_{Q^4}$ and $c_2(\cE)=\alpha+\beta$, that is, one of the bundles described in \cite[Example~2.2]{APW94}.
\end{enumerate}
\end{theorem}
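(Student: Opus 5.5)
We normalize $\cE$ so that $c_1(\cE)=c_1 H_{Q^4}$ with $c_1\in\{0,-1\}$, and write $c_2(\cE)=a\alpha+b\beta$. The anticanonical class of $\P(\cE)$ is $-K_{\P(\cE)}=2\xi+(4-c_1)H$, where $\xi$ is the tautological class. The weak Fano condition says exactly that $\cE(\tfrac{4-c_1}{2})$, formally twisted, is nef on the ruled variety and that $(-K)^5>0$. Nefness of $-K$ bounds the splitting type on every line: if $\cE|_\ell=\cO(a_\ell)\oplus\cO(b_\ell)$, then $|a_\ell-b_\ell|\le 4$. It also controls the second extremal ray of $\overline{NE}(\P(\cE))$. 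Following \cite{APW94} and \cite{Tak26}, we split into two cases: $\cE$ unstable (more precisely, not stable) and $\cE$ stable.

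\textbf{Unstable case.} Let $\cO(-k)\to\cE$ be the maximal destabilizing line subbundle, with $k\le 0$. The quotient is $\cI_Z(c_1+k)$, where $Z$ is empty or of codimension two. We restrict to general lines, and to conics through points of $Z$, and use the nefness bound. This should force $Z=\emptyset$, so $\cE$ splits, giving (1). An alternative route: $-K$ nef and big with $\cE$ unstable forces $h^0(\cE(-1))$ to be large compared with $c_2$. Then $c_2(\cE(-k))=0$, because the class of $Z$ is effective in $A^2=\Z\alpha\oplus\Z\beta$ and has nonpositive intersection with a suitable nef class.

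\textbf{Stable case.} Bogomolov and Schwarzenberger give integrality and positivity constraints on $a$ and $b$. Riemann--Roch on $Q^4$ expresses $\chi(\cE(t))$ in terms of $a$, $b$ and $a+b$. Next, bigness and nefness of $2\xi+(4-c_1)H$ give the numerical inequality $(-K)^5\ge0$, i.e.\ a polynomial inequality in $a+b$, $a^2+b^2$ and $c_1$. Nefness also gives vanishing of certain cohomology via Kawamata--Viehweg on $\P(\cE)$, namely $H^i(\cE(t))=0$ for small $t$ and $i>0$. Combining these with $h^0(\cE)=0$ (stability), a Riemann--Roch sign argument in the style of \cite{APW94} leaves finitely many pairs. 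For $c_1=-1$ these should be $(a,b)\in\{(1,0),(0,1),(1,1),(2,0),(0,2),\dots\}$; for $c_1=0$, small pairs such as $(1,1)$, $(2,1)$, and so on.

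The pairs $(1,0)$ and $(0,1)$ with $c_1=-1$ are identified with the spinor bundles, by the Ottaviani-type characterization of bundles with these Chern classes; this gives (2). The pair $(1,1)$ with $c_1=-1$ gives the bundles of \cite[Example~2.2]{APW94}, which is (3).

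\textbf{Main obstacle.} The hard step is excluding the remaining numerical cases, especially the asymmetric ones such as $c_2=2\alpha$ or $c_1=0$, $c_2=\alpha+2\beta$. The plan here is to restrict $\cE$ to planes in the two families. For a plane $P$ of class $\alpha$, we have $c_2(\cE|_P)=c_2\cdot\alpha$, which is $a$ or $b$ depending on the intersection form, where $\alpha^2=\beta^2=1$ and $\alpha\beta=0$. The line bound $|a_\ell-b_\ell|\le4$ on all lines in $P$, together with the stability information, shows that $\cE$ is uniform on lines, hence linear-uniform in the sense of \cite{Fri83}. Fritzsche's classification of linear-uniform bundles on quadrics then forces $\cE$ to be split, spinor or a twist of these. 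This contradicts the assumed Chern classes, or else forces instability, contradicting the stable case assumption. If uniformity fails, we would instead use jumping lines. A jumping line of large gap makes the section $\tilde\ell$ over it $K$-negative, and this contradicts nefness. Controlling the jumping-line locus when $c_2$ is unbalanced is where we expect the most work.
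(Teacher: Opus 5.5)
Your outline names several correct ingredients: normalization, $-K_{\P(\cE)}=2\xi+(4-c_1)H$, Riemann--Roch, restriction to planes, Fritzsche's theorem, and identifying $(1,0)$, $(0,1)$, $(1,1)$ with $c_1=-1$ via \cite{APW94}. But the steps that carry the proof are missing or would fail as described.

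\textbf{The finite list.} You never derive it, and your guesses point at cases that do not exist. With $c_1=-1$, the pairs $(2,0)$ and $(0,2)$ violate $a^2+b^2-13(a+b)\equiv 0\pmod{12}$. With $c_1=0$, the pairs $(1,1)$, $(2,1)$ and $\alpha+2\beta$ violate $a+b\equiv0\pmod 2$ or $a(a+1)+b(b+1)\equiv0\pmod{12}$. The cases that really survive are $(-1,3)$, $(-3,9)$, $(2,2)$ (and mirrors) for $c_1=0$, and $(0,4)$, $(-2,6)$, $(-2,7)$ (and mirrors) for $c_1=-1$. All of them have $\chi(\cE)\le 0$, so your Riemann--Roch sign argument does not remove them. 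Bogomolov only bounds $a+b$, so stability does not exclude a negative entry. Also, Kawamata--Viehweg on $\P(\cE)$ gives $H^i(\cE(t))=0$ only for $t\ge 2$ (resp.\ $t\ge 3$). At $t=0,-1$ the available vanishing is Le Potier's, and only for $i\ge 2$. The paper builds its list with no stable/unstable split: nefness of $\cE(2)$ (resp.\ $\cE(5/2)$) through \cite[Proposition~1.4~(B)]{APW94}, nefness of $s_3$ (giving $a+b\le 8$), bigness, and integrality.

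\textbf{The unstable case.} This has no working mechanism. For $c_1=0$ and a section of $\cE$ with quotient $\cI_Z$, nefness of $\cE(2)$ on lines only gives $\deg(Z\cap\ell)\le 2$, which does not force $Z=\emptyset$. The missing tool is the test $(\xi-kH)\cdot D^4\ge 0$ for effective $\xi-kH$ against the nef class $D=\xi+\frac{4-c_1}{2}H$, computed via Segre classes. This bounds the largest twist carrying a section. Effectivity of $c_2(\cE(-k))=[Z]$ in $\Z\alpha\oplus\Z\beta$, i.e.\ \cite[Proposition~1.6]{APW94}, then gives splitting.

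\textbf{The plane argument.} The mechanism is wrong. Stability of $\cE$ says nothing about $\cE|_\Pi$, and $|a_\ell-b_\ell|\le 4$ allows several splitting types, so uniformity does not follow. No jumping-line analysis is needed either. What works is the following. In the surviving cases with an entry $a\le 0$, the restriction to every $\alpha$-plane satisfies $c_1^2>4c_2$ on $\P^2$, hence is Bogomolov-unstable. Take a maximal destabilizing section on $\P^2$ and use nefness of $\cE|_\Pi(2)$ (resp.\ $\cE|_\Pi(5/2)$), first on a line avoiding its zeros and then on lines through them. This forces $\cE|_\Pi\simeq\cO(1)\oplus\cO(-1)$, $\cO\oplus\cO(-1)$ or $\cO(1)\oplus\cO(-2)$, and rules out $c_2(\cE|_\Pi)=-3$ altogether. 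Since every line lies in a unique $\alpha$-plane, a twist of $\cE$ is linear-uniform, and Fritzsche finishes.

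\textbf{The case $c_1=0$, $c_2=(2,2)$.} This is absent from your plan, and planes cannot handle it because $c_2\cdot\alpha=c_2\cdot\beta=2$. The paper asserts $H^q(\cE(-2))=0$ for $q>0$ via its Schneider-type vanishing. It then gets $h^0(\cE(-2))=\chi(\cE(-2))=1$, contradicting $(\xi-2H)\cdot D_0^4=-120$. That vanishing, as quoted, cannot be right. Applied to the weak Fano bundle $\cO(2)\oplus\cO(-2)$, it would give $H^4(Q^4,\cO(-4))=0$. Kawamata--Viehweg only yields $H^q(\cE(t))=0$ for $t\ge 2$. In fact, for $(2,2)$ with $H^0(\cE(-2))=0$, Serre duality and $\chi(\cE(-2))=1$ force $h^2(\cE(-2))\ge 1$. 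So this case still needs a valid argument in any write-up.
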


We briefly outline the proof of Theorem~\ref{4}. After tensoring by a line bundle, we may assume that \(\cE\) is normalized. Thus $c_1(\cE)=c_1H_{Q^4}$ with $c_1\in\{0,-1\}$. The weak Fano condition gives the nef and big divisor
$$
\xi+\frac{4-c_1}{2}H
$$
on $\mathbb P(\cE)$, where $\xi$ denotes the tautological divisor corresponding to $\cO_{\P(\cE)}(1)$ and $H$ denotes the pullback via $\pi\colon\mathbb P(\cE)\to Q^4$ of $H_{Q^4}$. 

Applying the numerical inequalities of \cite{APW94} to the nef divisor $\xi+\frac{4-c_1}{2}H$, we have $-4\le a,b\le 12$ for $c_1(\cE)=0$ and $-3\le a,b\le 12$ for $c_1(\cE)=-H_{Q^4}$, where $c_2(\cE)=a\alpha+b\beta$. The Riemann--Roch theorem reduces the problem to a finite list of cases. Then we eliminate the remaining cases one by one. In the arguments on $Q^4$, we follow the method of \cite{APW94}, combining vanishing results and intersection computations on $\mathbb P(\mathcal E)$ with the splitting criterion established in \cite{APW94}. The remaining case with $c_1(\cE)=0$, namely $c_2(\cE)=2\alpha+2\beta$, is excluded by Schneider's vanishing theorem for nef and big vector bundles, in the form recalled by Laytimi and Nagaraj \cite{LN18}.

The paper is organized as follows. In Section~2, we collect the basic definitions and preliminary results used throughout the paper, including the Chow ring of $Q^4$, weak Fano bundles, nef vector bundles, and linear-uniform bundles in \cite{Fri83}. In Section~3, we establish numerical restrictions on the second Chern classes of a normalized rank $2$ weak Fano bundle on $Q^4$, following the method of \cite{APW94}. Section~4 is devoted to several lemmas, including results on nef vector bundles on $\P^2$. In Section~5, we treat the case $c_1(\mathcal E)=0$ and prove that only split bundles occur in this case. In Section~6, we treat the case $c_1(\mathcal E)=-H_{Q^4}$, excluding all remaining numerical possibilities except split bundles, the two spinor bundles and the stable bundles with $c_2(\mathcal E)=\alpha+\beta$. Finally, in Section~7, we combine the results of Sections~5 and~6 to prove the main classification theorem.

\subsection*{Notations}

Throughout this paper, the ground field $k$ is an algebraically closed field of characteristic zero. All varieties are assumed to be defined over $k$. We use standard notation as in \cite{Har77, Laz04a, Laz04b, OSS80}. Let $X$ be an $n$-dimensional smooth projective variety, let $\cE$ be a rank $r$ vector bundle on $X$, and let $H_X$ be an ample divisor on $X$.

\begin{itemize}
\item We denote a projective \(n\)-space by \(\P^n\) and a smooth quadric hypersurface in \(\P^{n+1}\) by \(Q^n\). We denote the hyperplane classes of \(\P^n\) and \(Q^n\) by \(H_{\P^n}\) and \(H_{Q^n}\), respectively.

\item We use Grothendieck's convention and set $\mathbb P(\mathcal E):=\operatorname{Proj}_X\bigl(\operatorname{Sym}^{\bullet}\mathcal E\bigr)$. 

\item We denote the natural projection by $\pi\colon\mathbb P(\mathcal E)\longrightarrow X$ and the tautological divisor corresponding to $\cO_{\P(\cE)}(1)$ by $\xi$. We set $H:=\pi^*H_X$ on $\P(\cE)$.

\item A Cartier divisor $D$ on $X$ is called {\it nef} if for every irreducible curve $C\subset X$, the intersection number $D\cdot C\ge 0$.

\item A vector bundle $\cE$ on $X$ is called {\it nef} if the tautological line bundle $\cO_{\P(\cE)}(1)$ is nef.

\item A nef Cartier divisor $L$ on $X$ is called {\it big} if $L^{\dim X}>0$.

\item A vector bundle $\cE$ on $X$ is called {\it big} if the tautological line bundle $\cO_{\P(\cE)}(1)$ is big.

\item We say that $X$ is {\it Fano} if $-K_X$ is ample. 

\item We say that $X$ is {\it weak Fano} if $-K_X$ is nef and big. 

\item A vector bundle $\cE$ is a {\it split bundle} if $\cE$ is a direct sum of line bundles. 

\item We denote the dual vector bundle of $\cE$ by $\mathcal E^*:=\mathcal{H}om_{\mathcal O_X}(\mathcal E,\mathcal O_X)$.

\item For \(t\in\mathbb Z\), we write $\cE(t):=\cE\otimes\mathcal O_X(tH_X)$. For $t\in\mathbb Q\setminus\mathbb Z$, $\cE(t)$ denotes the corresponding formal $\mathbb Q$-twist.

\item For a rank $r$ vector bundle $\cE$, its slope with respect to $H_X$ is defined by $\mu_{H_X}(\mathcal E):=\frac{c_1(\mathcal E)\cdot H_X^{n-1}}{\operatorname{rk}(\mathcal E)}$. $\mathcal E$ is {\it $H_X$-semistable} (resp.\ {\it $H_X$-stable}) if for every nonzero proper subsheaf $0\neq\mathcal F\subsetneq\mathcal E$ with $0<\operatorname{rk}(\mathcal F)<r$, $\mu_{H_X}(\mathcal F)\leq\mu_{H_X}(\mathcal E)$ (resp.\ $\mu_{H_X}(\mathcal F)<\mu_{H_X}(\mathcal E)$).

\item We denote the group of rational equivalence classes of algebraic $i$-cycles on $X$ by $A_i(X)=A^{n-i}(X)$. We denote the Chow ring of $X$ by $A(X):=\bigoplus_i A_i(X)$.

\item We denote the $i$-th Chern class of $\cE$ by $c_i(\cE)$. Assume that $\Pic(X)\simeq \Z[H_X]$. We identify $c_1(\cE)$ with the integer $c_1$, that is, $c_1(\cE)=c_1H_X\in \Pic(X)\simeq A^1(X)$. 

\item If $r=2$, we say that $\cE$ is {\it normalized} if $c_1=0$ or $-1$.

\item We denote the $i$-th Segre class of $\cE$ by $s_i(\cE)$. It is defined by the equation $s_i(\cE):=\pi_{*}(\xi^{r-1+i})$.
\end{itemize}

\section{Preliminaries}

\subsection{Properties of weak Fano bundles on $Q^4$}

\begin{definition}\label{def}
Let $X$ be a smooth projective variety and $\mathcal E$ be a vector bundle on $X$. We say that $\mathcal E$ is a \emph{weak Fano bundle} if the projective bundle $\mathbb P(\mathcal E)$ is a weak Fano variety, i.e., $-K_{\mathbb P(\mathcal E)}$ is nef and big.
\end{definition}

For a weak Fano bundle, we have the following positivity property.
\begin{lemma}\label{lem:D-nef-big}
Let $\cE$ be a normalized rank $2$ weak Fano bundle on $Q^4$. Then $\xi+\frac{4-c_1}{2}H$ is nef and big. Moreover, $\xi+3H$ is ample. In particular, $\cE(3)$ is ample.
\end{lemma}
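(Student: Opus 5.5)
We begin by computing the canonical divisor of $\P(\cE)$ via the relative Euler sequence. For a rank $2$ bundle $\pi\colon\P(\cE)\to X$ we have
$$
K_{\P(\cE)}=-2\xi+\pi^*\bigl(K_X+\det\cE\bigr).
$$
On $Q^4$ we have $K_{Q^4}=-4H_{Q^4}$ and $\det\cE=c_1H_{Q^4}$. Hence
$$
-K_{\P(\cE)}=2\xi+(4-c_1)H=2\Bigl(\xi+\tfrac{4-c_1}{2}H\Bigr).
$$
By Definition~\ref{def}, $-K_{\P(\cE)}$ is nef and big. Nefness and bigness are preserved under multiplication by the positive rational number $\tfrac12$, so $\xi+\frac{4-c_1}{2}H$ is a nef and big $\Q$-divisor. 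This settles the first assertion.

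For the ampleness of $\xi+3H$, we write it as the sum of a nef divisor and a divisor that is positive on every curve not contracted by $\pi$. Set $D:=\xi+\frac{4-c_1}{2}H$. Since $c_1\in\{0,-1\}$, we have $\frac{4-c_1}{2}\in\{2,\tfrac52\}$, so
$$
\xi+3H=D+\epsilon H,\qquad \epsilon=3-\tfrac{4-c_1}{2}\in\{1,\tfrac12\}>0.
$$
Here $D$ is nef and $H=\pi^*H_{Q^4}$ is nef. We check ampleness by Kleiman's criterion on the closed cone of curves $\overline{NE}(\P(\cE))$. The Picard number of $\P(\cE)$ is $2$, so this cone is a two-dimensional cone with two extremal rays.

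One extremal ray is spanned by the class $f$ of a line in a fiber of $\pi$. It satisfies $H\cdot f=0$ and $\xi\cdot f=1$, so $(\xi+3H)\cdot f=1>0$.

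For the other ray $R$, take any nonzero class $\gamma\in R$ that is not proportional to $f$. Then $H\cdot\gamma>0$. Indeed, $H$ is nef, so $H\cdot\gamma\ge 0$, and $H^\perp\cap\overline{NE}$ is exactly the ray spanned by $f$: it is the face contracted by $\pi$, as $H$ is the pullback of an ample divisor. Since $D\cdot\gamma\ge0$, we get $(\xi+3H)\cdot\gamma=D\cdot\gamma+\epsilon H\cdot\gamma>0$.

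Thus $\xi+3H$ is strictly positive on $\overline{NE}\setminus\{0\}$, and Kleiman's criterion shows that it is ample. Equivalently, $\cO_{\P(\cE)}(1)\otimes\pi^*\cO(3)=\cO_{\P(\cE(3))}(1)$ is ample, so $\cE(3)$ is ample.

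The step requiring the most care is the assertion that $H$ vanishes on $\overline{NE}$ only along the fiber ray. This holds because $\pi^*$ of an ample class is zero exactly on curve classes contracted by $\pi$, and these are the multiples of $f$. Since the cone is two-dimensional, a limit of effective classes on which $H$ vanishes must also lie on the ray $\Q_{\ge0}f$. The bigness of $D$ is not needed for the second statement.
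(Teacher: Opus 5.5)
Your proof is correct and follows the same route as the paper: the same computation $-K_{\P(\cE)}=2\xi+(4-c_1)H$, the same decomposition $\xi+3H=D+\frac{c_1+2}{2}H$, and Kleiman's criterion split according to whether $H\cdot\gamma$ vanishes. The only difference is the case $H\cdot\gamma=0$: you use $\rho(\P(\cE))=2$ (together with pointedness of $\overline{\operatorname{NE}}$) to identify such $\gamma$ with nonnegative multiples of the fiber class $f$, whereas the paper gets $\xi\cdot\gamma=(\xi+mH)\cdot\gamma>0$ directly from the ampleness of $\xi+mH$ for $m\gg0$, without using the Picard number.
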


\begin{proof}
Since $K_{Q^4}=-4H_{Q^4}$, we have
$$
 -K_{\P(\cE)}=2\xi+(4-c_1)H = 2\left(\xi+\frac{4-c_1}{2}H\right).
$$
By Definition~\ref{def}, $-K_{\P(\cE)}$ is nef and big. Hence $\xi+\frac{4-c_1}{2}H$ is also nef and big.

It remains to show that $\xi+3H$ is ample. Set $A:=\xi+\frac{4-c_1}{2}H$. Then $\xi+3H= A+\frac{c_1+2}{2}H$. Since $c_1=0$ or $-1$, the coefficient $(c_1+2)/2$ is positive.

Let $\gamma\in \overline{\operatorname{NE}}\bigl(\mathbb P(\mathcal E)\bigr)\backslash\{0\}$. Suppose that \(H\cdot\gamma>0\). Since \(A\) is nef, $(A+\frac{c_1+2}{2} H)\cdot\gamma>0$. Suppose that \(H\cdot\gamma=0\). By \cite[Proposition~II.7.10~(b)]{Har77}, there exists \(m>0\) such that \(\xi+mH\) is very ample. Hence $0<(\xi+mH)\cdot\gamma=\xi\cdot\gamma$. We obtain
\[
\left(A+\frac{c_1+2}{2} H\right)\cdot\gamma=\xi\cdot\gamma>0.
\]

Therefore \(A+\frac{c_1+2}{2} H\) is ample by Kleiman's criterion \cite[Theorem~1.4.29]{Laz04a}. Thus $\xi+3H$ is ample. Hence $\cE(3)$ is ample.
\end{proof}

We start by recalling the structure of $A^2(Q^4)$ and the notation for $c_2(\mathcal E)$. The structure of $A^2(Q^4)$ is one of the main differences from the higher-dimensional case treated in \cite{Tak26}. We refer to \cite[Section~0]{AS92} for these standard facts about the cycle classes on a smooth $4$-dimensional quadric $Q^4$.

\begin{remark}\label{rem:chow-ring-Q4}
We recall the relevant part of the Chow ring of a smooth $4$-dimensional quadric $Q^4$. Let $\alpha,\beta\in A^2(Q^4)$ be the classes of planes belonging to the two families of planes on $Q^4$. Then
$$
A^2(Q^4)=\mathbb Z\alpha\oplus\mathbb Z\beta.
$$
With our convention, we have $H_{Q^4}^2=\alpha+\beta$. Moreover, the intersection products satisfy
$$
\alpha^2=1,\qquad\beta^2=1,\qquad\alpha\beta=0,
$$
where we identify $A^4(Q^4)$ with $\mathbb Z$ via the degree map. Equivalently, 
$$
\deg(\alpha^2)=\deg(\beta^2)=1,\qquad\deg(\alpha\beta)=0.
$$
In particular, $H_{Q^4}^4=(\alpha+\beta)^2=2$. Accordingly, throughout this paper, if \(\mathcal E\) is a vector bundle on $Q^4$ with $c_2(\mathcal E)=a\alpha+b\beta$ for $a,b\in\mathbb Z$, we simply write $c_2=(a,b)$.
\end{remark}

\subsection{Standard criteria for splitting and stability}

We recall several standard facts used repeatedly. We begin with a positivity result for nef vector bundles on $Q^4$.

\begin{proposition}\cite[Proposition~1.4~(B)]{APW94}\label{prop:APWB}
Let $\mathcal E$ be a rank $2$ nef vector bundle on $Q^4$ with $c_2=(a,b)$. Then the following inequalities hold:
$$
a\ge 0,\qquad b\ge 0, \qquad c_1^2\ge a,\qquad c_1^2\ge b,
$$
and
$$
a^2+b^2-3c_1^2(a+b)+2c_1^4\ge 0.
$$
\end{proposition}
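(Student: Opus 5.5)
The plan is to read off each inequality as the nonnegativity of a top self-intersection of a nef tautological divisor. We use two facts throughout: the restriction of a nef bundle to a subvariety is nef, and if $\xi$ is nef on a projective variety $Y$ then $\xi^{\dim Y}\ge 0$. We also use the Segre class convention $s_i(\mathcal E)=\pi_*(\xi^{1+i})$ for rank $2$. From the relation $\xi^2-\pi^*c_1(\mathcal E)\,\xi+\pi^*c_2(\mathcal E)=0$ one obtains recursively
$$
s_1=c_1(\mathcal E),\quad s_2=c_1(\mathcal E)^2-c_2(\mathcal E),\quad s_3=c_1(\mathcal E)^3-2c_1(\mathcal E)c_2(\mathcal E),\quad s_4=c_1(\mathcal E)^4-3c_1(\mathcal E)^2c_2(\mathcal E)+c_2(\mathcal E)^2 .
$$

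\textbf{Step 1: restriction to planes.} Let $P\subset Q^4$ be a plane of the family with class $\alpha$. Then $\mathcal E|_P$ is a nef rank $2$ bundle on $P\simeq\mathbb P^2$. We have $c_1(\mathcal E|_P)=c_1H_{\mathbb P^2}$. By Remark~\ref{rem:chow-ring-Q4}, $c_2(\mathcal E|_P)=\deg(c_2\cdot\alpha)=a\alpha^2+b\alpha\beta=a$.

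For a nef rank $2$ bundle $\mathcal F$ on $\mathbb P^2$ we have $c_2(\mathcal F)\ge 0$. One way to see this is that $\mathcal F$ is globally generated up to numerical limits, so its Chern classes are nonnegative (Fulton--Lazarsfeld positivity for nef bundles, \cite{Laz04b}). In addition, $\xi_{\mathcal F}^3=s_2(\mathcal F)=c_1(\mathcal F)^2-c_2(\mathcal F)\ge 0$, since $\xi_{\mathcal F}$ is nef on the threefold $\mathbb P(\mathcal F)$. These give $a\ge 0$ and $c_1^2\ge a$. Using a plane of class $\beta$ instead gives $b\ge 0$ and $c_1^2\ge b$ in the same way.

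\textbf{Step 2: the quadratic inequality.} Since $\xi$ is nef on the fivefold $\mathbb P(\mathcal E)$, we have $\xi^5\ge 0$, and $\xi^5=\deg s_4(\mathcal E)$. The individual terms are computed from Remark~\ref{rem:chow-ring-Q4}:
\begin{itemize}
\item $c_1(\mathcal E)^4=c_1^4H_{Q^4}^4=2c_1^4$;
\item $c_1(\mathcal E)^2c_2(\mathcal E)=c_1^2(\alpha+\beta)(a\alpha+b\beta)=c_1^2(a+b)$;
\item $c_2(\mathcal E)^2=a^2+b^2$.
\end{itemize}
Hence $\xi^5=a^2+b^2-3c_1^2(a+b)+2c_1^4\ge 0$, which is the last inequality.

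\textbf{Main obstacle.} The only non-formal input is $c_2\ge 0$ for nef bundles on $\mathbb P^2$. I would quote Fulton--Lazarsfeld (or Demailly--Peternell--Schneider) positivity of Chern classes of nef bundles, rather than reprove it. Everything else is bookkeeping in the Chow ring.
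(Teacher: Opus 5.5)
Your proof is correct: restricting to an $\alpha$-plane and a $\beta$-plane gives the four linear inequalities, using $c_2\ge 0$ and $\xi^3=s_2=c_1^2-c_2\ge 0$ for nef rank $2$ bundles on $\mathbb P^2$, and $\xi^5=\deg s_4(\mathcal E)\ge 0$ gives the quadratic one, with all intersection numbers computed correctly. The paper gives no proof of its own and simply quotes \cite[Proposition~1.4~(B)]{APW94}; your argument is the standard nonnegativity-of-Chern-and-Segre-classes reasoning behind that result, which the paper itself also uses for $s_3$ in Proposition~\ref{prop:numerical-c1-0}.
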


Next, we recall some criteria for splitting and stability on $Q^4$.

\begin{proposition}\cite[Proposition~1.6]{APW94}\label{prop:APW-splitting}
Let $\cE$ be a rank $2$ vector bundle on $Q^4$ with $c_2=(a,b)$, $H^0(Q^4,\cE)\ne 0$ and $H^0(Q^4,\cE(-1))=0$. If
$$
 a<0\quad\text{or}\quad b<0\quad\text{or}\quad a=b=0,
$$
then $\cE$ is a split bundle.
\end{proposition}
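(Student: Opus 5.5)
This is the splitting criterion of \cite[Proposition~1.6]{APW94}. I would prove it by the standard Serre-correspondence argument. Take a nonzero section $s\in H^0(Q^4,\cE)$. Since $H^0(Q^4,\cE(-1))=0$ and $\Pic(Q^4)=\Z H_{Q^4}$, the section cannot vanish along a divisor: otherwise it would factor through $\cE(-D)$ with $D$ effective and nonzero, giving a nonzero section of $\cE(-1)$. Hence the zero scheme $Z=(s)_0$ is either empty or of pure codimension $2$, and we get the Koszul sequence
\[
0\to\cO_{Q^4}\to\cE\to\cI_Z(c_1)\to 0.
\]
Here $[Z]=c_2(\cE)=a\alpha+b\beta$ in $A^2(Q^4)$.

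Next I would use effectivity of $Z$. If $Z\neq\emptyset$, then $[Z]$ is the class of a nonzero effective surface. Intersecting with the nef classes $\alpha$ and $\beta$ computes degrees against planes of each family. Concretely, a general plane of either family meets $Z$ properly, because the planes of one family cover $Q^4$ and move in a homogeneous family, so Kleiman transversality applies. This gives
\[
\deg([Z]\cdot\alpha)=a\ge 0,\qquad \deg([Z]\cdot\beta)=b\ge 0.
\]
Moreover, $\deg([Z]\cdot H^2)=a+b>0$ because $Z$ is a nonempty surface. So each of the three hypotheses $a<0$, $b<0$, or $a=b=0$ contradicts $Z\ne\emptyset$. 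The step I expect to need the most care is exactly this positivity: showing that a nonempty codimension-$2$ cycle has both components nonnegative and not both zero. The cleanest route is via $\alpha^2=\beta^2=1$, $\alpha\beta=0$ and general position of the planes under the $SO$-action.

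Therefore $Z=\emptyset$ and $s$ is nowhere vanishing. The sequence becomes
\[
0\to\cO_{Q^4}\to\cE\to\cO_{Q^4}(c_1)\to 0,
\]
an extension classified by $H^1(Q^4,\cO_{Q^4}(-c_1))$. This group vanishes for every integer by Bott vanishing on quadrics (Kodaira vanishing and Serre duality, since $\dim Q^4=4\ge 2$). So the extension splits and $\cE\simeq\cO_{Q^4}\oplus\cO_{Q^4}(c_1)$. I would also note that $c_2=0$ is then automatic, consistent with the hypotheses.
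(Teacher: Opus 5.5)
Your proof is correct. The paper gives no proof of its own and simply quotes \cite[Proposition~1.6]{APW94}. Your Serre-correspondence argument is the standard proof of that result: the zero scheme of $s$ is empty or a surface of class $a\alpha+b\beta$, general planes of each family force $a,b\ge 0$ and $a+b>0$, and $H^1(Q^4,\cO_{Q^4}(-c_1))=0$ then gives the splitting.

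As a side remark, your argument shows more than is stated. The hypotheses $H^0(Q^4,\cE)\ne0$ and $H^0(Q^4,\cE(-1))=0$ already force $c_2(\cE)=[Z]$ to be effective, so the cases $a<0$ or $b<0$ are vacuous. Only $a=b=0$ with $\cE\simeq\cO_{Q^4}\oplus\cO_{Q^4}(c_1)$ can occur, which is consistent with how the paper applies the proposition.
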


\begin{proposition}\label{prop:stable-section}
Let $\cE$ be a normalized rank $2$ vector bundle on a smooth projective variety $X$ with ${\rm Pic}(X)\simeq \Z[H_X]$, where $H_X$ is an effective ample generator. Then the following hold: 
\begin{enumerate}
\item $\cE$ is $H_X$-stable if and only if $H^0(X, \cE)=0$.
\item If $c_1=0$, then $\cE$ is $H_X$-semistable if and only if $H^0(X,\cE(-1))=0$.
\end{enumerate}
\end{proposition}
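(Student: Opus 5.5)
The argument is the standard one, using only $\operatorname{Pic}(X)\simeq\Z[H_X]$, the fact that $H_X$ is an effective ample generator, and the fact that reflexive rank-one sheaves on $X$ are line bundles. For every integer $k$ we have $\mu_{H_X}(\cO_X(kH_X))=k\,H_X^n$ and $\mu_{H_X}(\cE)=\tfrac{c_1}{2}H_X^n$ with $c_1\in\{0,-1\}$. Since $\operatorname{rk}\cE=2$, the only subsheaves to test are rank-one subsheaves $\mathcal F\subset\cE$. Replacing $\mathcal F$ by its saturation does not decrease the slope, so we may assume $\mathcal F$ is reflexive of rank one. On the smooth variety $X$ it is then a line bundle, so $\mathcal F\simeq\cO_X(kH_X)$ for some $k\in\Z$. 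Thus destabilizing subsheaves correspond exactly to nonzero maps $\cO_X(kH_X)\to\cE$, that is, to nonzero sections of $\cE(-k)$.

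For (1), first suppose $H^0(X,\cE)\neq 0$. A nonzero section gives an injection $\cO_X\hookrightarrow\cE$, and $\mu(\cO_X)=0\ge \tfrac{c_1}{2}H_X^n$. This contradicts stability, which requires strict inequality. Conversely, suppose $H^0(X,\cE)=0$. Since $H_X$ is effective, multiplication by a defining section gives $H^0(\cE(-k))\hookrightarrow H^0(\cE)$ for $k\ge 0$, so $H^0(\cE(-k))=0$ for all $k\ge0$. Hence every rank-one subsheaf $\cO_X(kH_X)$ has $k\le -1$, and its slope is $kH_X^n\le -H_X^n<\tfrac{c_1}{2}H_X^n$ because $c_1\ge -1$. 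So $\cE$ is stable.

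For (2), take $c_1=0$, so $\mu(\cE)=0$. Semistability fails exactly when some $\cO_X(kH_X)\subset\cE$ has $k\ge1$, that is, when $H^0(\cE(-k))\neq0$ for some $k\ge1$. By the same effectivity argument this is equivalent to $H^0(\cE(-1))\ne0$.

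The only point needing care is the reduction to line-bundle subsheaves. This uses the facts that saturation does not decrease the slope and that reflexive rank-one sheaves on a smooth variety are invertible. These are standard, and I would cite \cite{OSS80} for them.
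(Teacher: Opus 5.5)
Your proposal is correct and is essentially the argument of \cite[Lemma~1.2.5]{OSS80} that the paper defers to. You reduce to saturated rank-one subsheaves, which are reflexive and hence isomorphic to some $\cO_X(kH_X)$, identify these with nonzero sections of $\cE(-k)$, and then use the effectivity of $H_X$ together with $c_1\in\{0,-1\}$ to control all twists $\cE(-k)$ for $k\ge 0$ (resp.\ $k\ge 1$) by $H^0(X,\cE)$ (resp.\ $H^0(X,\cE(-1))$).
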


\begin{proof}
This can be proved by the same argument as in \cite[Lemma~1.2.5]{OSS80}.
\end{proof}

\begin{proposition}\cite[Corollary~3.5]{MP97}\label{prop:bogo}
Let $X$ be an $n$-dimensional normal projective variety which is smooth in codimension $2$. If $\cE$ is an $H_X$-semistable torsion-free sheaf of rank $r$, then
$$  
\frac{r-1}{2r}c_1(\cE)^2\cdot H_X^{n-2}\leq c_2(\cE)\cdot H_X^{n-2}.
$$
\end{proposition}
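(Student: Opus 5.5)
This is the Bogomolov-type inequality of \cite[Corollary~3.5]{MP97}, and it is quoted without proof. Still, if I had to justify it, I would reduce to the classical Bogomolov inequality on a smooth surface by taking complete intersections of general hyperplane sections. Replace $H_X$ by a very ample multiple $mH_X$. Semistability is unchanged, and both sides of the inequality scale by $m^{n-2}$, so the statement is unchanged. Then choose general members $D_1,\dots,D_{n-2}\in|mH_X|$ and set $S:=D_1\cap\dots\cap D_{n-2}$.

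By Bertini's theorem, $S$ is a normal projective surface. Since $X$ is smooth in codimension $2$, its singular locus has codimension at least $3$, so a general $S$ misses it and is therefore smooth. Because $\cE$ is torsion-free, it is locally free outside a closed set of codimension at least $2$, so a general $S$ meets that set in finitely many points and $\cE|_S$ is torsion-free on $S$. The Chern class computations are compatible with restriction:
\[
c_1(\cE|_S)^2=c_1(\cE)^2\cdot (mH_X)^{n-2},\qquad c_2(\cE|_S)=c_2(\cE)\cdot(mH_X)^{n-2}.
\]
Here the Chern classes of the torsion-free sheaf are defined on the smooth locus, which is enough because $S$ lies in it.

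The key step, and the main obstacle, is that $\cE|_S$ is still $mH_X|_S$-semistable. This is exactly the Mehta--Ramanathan restriction theorem (in the version of \cite{MP97} on normal varieties). It requires $m$ sufficiently large and $S$ general, and the restriction is done one hyperplane section at a time, preserving normality and semistability at each step. Once this holds, Bogomolov's inequality on the smooth surface $S$ for the semistable torsion-free sheaf $\cE|_S$ gives
\[
\Delta(\cE|_S)=2r\,c_2-(r-1)c_1^2\ge 0.
\]
This is the displayed inequality after dividing by $2r\,m^{n-2}$.

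One last point is the passage from the torsion-free sheaf to its reflexive hull. On a surface, $\cE|_S^{**}$ is locally free and semistable, and it has $c_2(\cE^{**}|_S)\le c_2(\cE|_S)$ with the same $c_1$. So it suffices to apply Bogomolov's inequality to the locally free hull. In our applications $X=Q^4$ is smooth and $\cE$ is locally free, so these technicalities disappear.
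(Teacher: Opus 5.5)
Your sketch is correct. The paper gives no proof of its own and simply quotes \cite[Corollary~3.5]{MP97}, and the standard proof of that result is exactly your reduction: cut down by general members of $|mH_X|$ to a smooth complete-intersection surface lying in the regular locus, keep semistability by the Mehta--Ramanathan restriction theorem, and then apply Bogomolov's inequality on the surface, with the reflexive-hull step you describe.

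One small correction to your closing remark: the paper never applies this proposition on $Q^4$. It uses it only on $X=\mathbb P^2$, for locally free sheaves, in Lemmas~\ref{lem:P2-c1-0-negative} and~\ref{lem:P2-c1-minus-negative}. There $n=2$, so the restriction step is vacuous and the statement is just the classical Bogomolov inequality on a surface.
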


\subsection{Properties of nef vector bundles and vanishing theorems}

Next, we prove some basic properties of nef vector bundles which will be used in the following sections.
\begin{lemma}\label{lem:nef-quotient-curve}
Let $\cF$ be a nef vector bundle on a smooth projective variety $X$. Let $C\subset X$ be a smooth irreducible curve. If $\cF|_C\twoheadrightarrow \cL$ is a quotient line bundle, then $\deg \cL\ge 0$.
\end{lemma}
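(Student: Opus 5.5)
The plan is to reduce the statement to the fact that nefness is preserved under pullback and under passing to quotients. Let $\iota\colon C\hookrightarrow X$ be the inclusion. The first step is to show that $\cF|_C=\iota^*\cF$ is nef on $C$. The projectivization $\P(\cF|_C)$ is the fibre product $\P(\cF)\times_X C$, and the inclusion $j\colon \P(\cF|_C)\to\P(\cF)$ satisfies $j^*\cO_{\P(\cF)}(1)=\cO_{\P(\cF|_C)}(1)$. Every irreducible curve $\Gamma\subset\P(\cF|_C)$ maps isomorphically, via $j$, onto a curve in $\P(\cF)$. By the projection formula,
\[
\cO_{\P(\cF|_C)}(1)\cdot\Gamma=\cO_{\P(\cF)}(1)\cdot j_*\Gamma\ge 0,
\]
since $\cO_{\P(\cF)}(1)$ is nef. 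Hence $\cF|_C$ is nef.

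The second step uses the quotient $\cF|_C\twoheadrightarrow\cL$. With Grothendieck's convention, a surjection onto a line bundle corresponds to a section $s\colon C\to\P(\cF|_C)$ satisfying $s^*\cO_{\P(\cF|_C)}(1)\simeq\cL$. This is the universal property of $\operatorname{Proj}(\operatorname{Sym}^\bullet)$. Let $C':=s(C)$, which is an irreducible curve isomorphic to $C$, because $s$ is a section of the projection. Then
\[
\deg\cL=\deg s^*\cO_{\P(\cF|_C)}(1)=\cO_{\P(\cF|_C)}(1)\cdot C'\ge 0
\]
by the nefness established in the first step.

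Neither step is a real obstacle. The only point requiring care is to use the correct convention: quotients of $\cF$, not subbundles, correspond to sections of $\P(\cF)$, and the tautological bundle restricts to the quotient line bundle. One could also skip the first step and compose $j\circ s\colon C\to\P(\cF)$ directly. This gives $(j\circ s)^*\cO_{\P(\cF)}(1)\simeq\cL$, so $\deg\cL=\cO_{\P(\cF)}(1)\cdot (j\circ s)_*[C]\ge0$.
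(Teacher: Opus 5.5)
Your proof is correct and takes essentially the same approach as the paper, which simply cites that nefness is preserved under restriction and under quotients. You supply the standard justifications of those two facts: the closed embedding $\P(\cF|_C)\hookrightarrow\P(\cF)$, and the section $s\colon C\to\P(\cF|_C)$ with $s^*\cO_{\P(\cF|_C)}(1)\simeq\cL$.
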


\begin{proof}
Since nefness is preserved under restriction and quotients, every quotient line bundle $\cL$ of $\mathcal F|_C$ is nef. Hence $\deg \cL\geq0$.
\end{proof}

\begin{lemma}\label{lem:IZ-nef}
Let $\mathcal F$ be a nef vector bundle on a smooth projective variety $X$ and let $Z\subsetneq X$ be a closed subscheme with ideal sheaf $\mathcal I_Z$. If there is a surjection $\mathcal F\twoheadrightarrow\mathcal I_Z$, then $Z=\emptyset$.
\end{lemma}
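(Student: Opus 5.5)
We prove Lemma~\ref{lem:IZ-nef} by showing that a nonempty $Z$ forces a line bundle quotient of negative degree on some curve, contradicting Lemma~\ref{lem:nef-quotient-curve}. Suppose $Z\neq\emptyset$. The map $\mathcal F\twoheadrightarrow\mathcal I_Z$ composed with $\mathcal I_Z\hookrightarrow\cO_X$ gives a morphism $\mathcal F\to\cO_X$ whose image is exactly $\mathcal I_Z$. Dualizing, we obtain a section $s\in H^0(X,\mathcal F^*)$ whose zero scheme is $Z$.

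Next we blow up $X$ along $Z$. Let $\sigma\colon\widetilde X\to X$ be the blow-up, with exceptional Cartier divisor $E$, so that $\sigma^{-1}\mathcal I_Z\cdot\cO_{\widetilde X}=\cO_{\widetilde X}(-E)$. Pulling back the surjection $\mathcal F\twoheadrightarrow\mathcal I_Z$ and composing with $\sigma^*\mathcal I_Z\twoheadrightarrow\sigma^{-1}\mathcal I_Z\cdot\cO_{\widetilde X}$ gives a surjection
$$
\sigma^*\mathcal F\twoheadrightarrow\cO_{\widetilde X}(-E).
$$
Since $\sigma^*\mathcal F$ is nef, its quotient line bundle $\cO_{\widetilde X}(-E)$ is nef. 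Because $Z\neq\emptyset$ and $Z\neq X$, the divisor $E$ is nonempty and $\sigma$ is birational, since $\mathcal I_Z\ne 0$ as $Z\subsetneq X$ and $X$ is integral.

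It remains to find a curve $C\subset\widetilde X$ with $-E\cdot C<0$. Choose a point $p\in E$ and a curve $C\subset\widetilde X$ through $p$ that is not contained in $E$; for instance, take the strict transform of a general curve in $X$ through $\sigma(p)$ meeting $Z$ properly. Then $E\cdot C>0$, because $E$ is an effective Cartier divisor meeting $C$ in a nonempty finite set. Passing to the normalization of $C$ to get a smooth curve, $\cO(-E)$ restricts to a quotient of the pulled-back nef bundle with negative degree. This contradicts Lemma~\ref{lem:nef-quotient-curve}, with nefness preserved under pullback by finite maps. Hence $Z=\emptyset$.

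The main obstacle is the last step: making sure such a curve $C$ exists and $E\cdot C>0$. An alternative that avoids the blow-up is to pick an irreducible component $Z_0$ of $Z$, a point $x\in Z_0$, and a smooth curve germ (after normalization) $C$ through $x$ not contained in $Z$. Then $\mathcal F|_C\to\mathcal I_Z\cdot\cO_C$, where the latter is the torsion-free part of the restriction, an invertible sheaf $\cO_C(-D)$ with $D>0$ effective supported where $C$ meets $Z$. This yields a quotient line bundle of negative degree on $C$, again contradicting Lemma~\ref{lem:nef-quotient-curve}. We would use this version if the blow-up bookkeeping becomes awkward.
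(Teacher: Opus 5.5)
Your blow-up argument is correct, and it takes a different route from the paper. The paper stays on $X$. It takes a smooth irreducible curve $C\subset X$ with $C\cap Z\neq\emptyset$ and $C\not\subset Z$, and restricts the surjection to $C$. It then checks directly that the image of $\mathcal I_Z\otimes\mathcal O_C\to\mathcal O_C$ is $\mathcal I_{Z\cap C/C}\simeq\mathcal O_C(-(Z\cap C))$. This gives a quotient line bundle of negative degree, contradicting Lemma~\ref{lem:nef-quotient-curve}. Your fallback paragraph is essentially this argument, with normalization replacing smoothness of $C$; that is fine as long as $C$ is a complete curve and not merely a germ.

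Your main argument instead principalizes $\mathcal I_Z$ first. Then $\mathcal O_{\widetilde X}(-E)$ is a nef quotient line bundle of $\sigma^*\mathcal F$, and the lemma reduces to the fact that a nonzero effective Cartier divisor on a projective variety cannot have nef negative.
\begin{itemize}
\item The gain is that you only need some irreducible curve meeting $E$ and not contained in $E$. Such a curve exists because $E$ is a nonempty proper closed subset of the integral projective variety $\widetilde X$. The paper instead needs a smooth curve in $X$ through a point of $Z$, whose existence it takes for granted (this requires a Bertini-type argument).
\item The cost is the blow-up machinery and working on a possibly singular $\widetilde X$.
\end{itemize}

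Two minor remarks:
\begin{itemize}
\item The strict transform of a curve through $\sigma(p)$ need not pass through $p$ itself, only through some point of $E$ over $\sigma(p)$. That is all you actually use.
\item Once $\mathcal O_{\widetilde X}(-E)$ is nef, $-E\cdot C\ge 0$ holds by definition. So the final detour through normalization and Lemma~\ref{lem:nef-quotient-curve} is unnecessary, and the dual section $s$ of $\mathcal F^*$ plays no role either.
\end{itemize}
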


\begin{proof}
Assume that $Z\neq\emptyset$. We can choose a smooth irreducible curve $C\subset X$ such that $C\not\subset Z$ and $C\cap Z\neq\emptyset$. Restricting the surjection $\mathcal F\twoheadrightarrow \mathcal I_Z$ to $C$, we obtain a surjection $\mathcal F|_C\twoheadrightarrow \mathcal I_Z\otimes\mathcal O_C$. Tensoring the
defining exact sequence
$$
0\to \mathcal I_Z\to \mathcal O_X\to \mathcal O_Z\to0
$$
with $\mathcal O_C$, we obtain an exact sequence
$$
\mathcal I_Z\otimes\mathcal O_C\longrightarrow\mathcal O_C\longrightarrow\mathcal O_Z\otimes\mathcal O_C\longrightarrow 0.
$$
Since $\mathcal O_Z\otimes\mathcal O_C\simeq \mathcal O_{Z\cap C}$, this becomes
$$
\mathcal I_Z\otimes\mathcal O_C\longrightarrow\mathcal O_C\longrightarrow\mathcal O_{Z\cap C}\longrightarrow 0.
$$
Hence the image of 
$$
\mathcal I_Z\otimes\mathcal O_C\longrightarrow \mathcal O_C
$$
is $\ker\bigl(\mathcal O_C\to \mathcal O_{Z\cap C}\bigr)=\mathcal I_{Z\cap C/C}$. Therefore there is a natural surjection $\mathcal I_Z\otimes\mathcal O_C\twoheadrightarrow \mathcal I_{Z\cap C/C}$. Combining these two surjections, we obtain a quotient $\mathcal F|_C\twoheadrightarrow \mathcal I_{Z\cap C/C}$.

Since $C$ is a smooth curve and $C\not\subset Z$, the scheme theoretic intersection $Z\cap C$ is a zero-dimensional closed subscheme of $C$. Moreover, by our choice of $C$, it is non-empty. Thus $\mathcal I_{Z\cap C/C}\simeq \mathcal O_C(-(Z\cap C))$ is a line bundle with $\deg \mathcal I_{Z\cap C/C}=-\deg(Z\cap C)<0$. 

On the other hand, $\mathcal F|_C$ is nef, because nefness is preserved
under restriction. Since $\mathcal I_{Z\cap C/C}$ is a quotient line
bundle of $\mathcal F|_C$, Lemma~\ref{lem:nef-quotient-curve} implies
that $\deg \mathcal I_{Z\cap C/C}\ge0$, which is a contradiction. Therefore $Z=\emptyset$.
\end{proof}

In this paper, we will use the following two vanishing theorems.

\begin{theorem}\cite[Theorem~1]{LP75}\label{thm:le}
Let $X$ be a smooth projective variety and $\cE$ be an ample vector bundle of rank $r$ on $X$. Then $H^i(X, \cE\tensor K_X)=0$ for $i \ge r$.
\end{theorem}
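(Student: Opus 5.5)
This statement is the Le Potier vanishing theorem, which the paper cites from \cite[Theorem~1]{LP75} and does not reprove. If a justification were required, I would reduce it to Kodaira--Nakano type vanishing on the projectivization. Set $Y:=\P(\cE)$ with projection $\pi\colon Y\to X$, of relative dimension $r-1$, and tautological divisor $\xi$. Since $\cE$ is ample, $\cO_Y(\xi)$ is an ample line bundle on $Y$. Moreover $\pi_*\cO_Y(\xi)=\cE$, and $R^j\pi_*\cO_Y(\xi)=0$ for $j>0$.

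The first step is to identify $\cE\tensor K_X$ as a direct image of a suitable twisted sheaf of differential forms on $Y$. The relative Euler sequence
$$
0\to \Omega_{Y/X}\to \pi^*\cE(-1)\to \cO_Y\to 0
$$
together with the sequence
$$
0\to\pi^*\Omega_X\to\Omega_Y\to\Omega_{Y/X}\to0
$$
gives a filtration of $\Omega_Y^{p}$ with graded pieces $\pi^*\Omega_X^{p-q}\tensor\Omega^q_{Y/X}$. For $p=n+r-1=\dim Y$ this is just $K_Y=\pi^*(K_X\tensor\det\cE)\tensor\cO_Y(-r)$. That is the wrong object, so I would follow Le Potier's actual route. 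One considers $\Omega_Y^{p}\tensor\cO_Y(\xi)$ with $p=n$, or more precisely the piece $\pi^*K_X\tensor\Omega^{0}_{Y/X}\tensor\cO_Y(\xi)$. Its direct image is $K_X\tensor\cE$ with vanishing higher direct images.

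The second step is to check the other graded pieces. For these, I would use Bott-type vanishing of $H^j\bigl(\P^{r-1},\Omega^q(1)\bigr)$ on the fibers: $\pi_*\bigl(\Omega^q_{Y/X}(\xi)\bigr)=0$ for $q\ge 1$, and all higher direct images vanish. Hence, by the Leray spectral sequence and the filtration, $H^i(Y,\Omega_Y^{n}\tensor\cO_Y(\xi))\simeq H^i(X,\cE\tensor K_X)$.

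The third step applies Nakano vanishing to the ample line bundle $\cO_Y(\xi)$: $H^i(Y,\Omega^p_Y\tensor L)=0$ for $p+i>\dim Y=n+r-1$. With $p=n$ this gives vanishing for $i>r-1$, i.e.\ $i\ge r$, as claimed.

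The main obstacle is the second step: the filtration of $\Omega^n_Y$ involves all pieces $\pi^*\Omega_X^{n-q}\tensor\Omega^q_{Y/X}$. One must verify carefully that the pieces with $q\ge1$, twisted by $\xi$, have vanishing cohomology on the fibers $\P^{r-1}$. This is exactly Bott's formula, since $\Omega^q_{\P^{r-1}}(1)$ has no cohomology for $1\le q\le r-1$. In the paper I would simply cite \cite{LP75}, as the author does.
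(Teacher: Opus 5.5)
Your proposal is correct and matches the paper, which gives no argument and simply cites \cite[Theorem~1]{LP75}, as you propose to do. Your optional sketch is also sound: after twisting by $\xi$, the graded pieces $\pi^*\Omega_X^{n-q}\tensor\Omega^q_{Y/X}$ with $q\ge 1$ have vanishing $R\pi_*$ by Bott's formula on the fibers $\P^{r-1}$, so $H^i(X,\cE\tensor K_X)\simeq H^i(Y,\Omega^n_Y\tensor\cO_Y(\xi))$, and Akizuki--Nakano vanishing for the ample line bundle $\cO_Y(\xi)$ kills this for $i\ge r$ (over an arbitrary algebraically closed field of characteristic zero, one applies Nakano vanishing via the Lefschetz principle).
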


We will use the following special case of Schneider's vanishing theorem, as recalled in \cite[Theorem~1.6]{LN18}.

\begin{theorem}\cite[Theorem~1.6]{LN18}\label{thm:schneider}
Let $X$ be an $n$-dimensional smooth projective variety, $\cF$ be a vector bundle on $X$ and $\cL$ be a line bundle on $X$. If $\cF\tensor \cL$ is nef and big, then $H^q\bigl(X,K_X\tensor \cF\tensor \det\cF\tensor \cL\bigr)=0$ for $q>0$.
\end{theorem}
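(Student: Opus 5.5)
The plan is to reduce the statement to the Kawamata--Viehweg vanishing theorem on the projective bundle $p\colon Y:=\P(\cF)\to X$. Let $\xi_\cF$ be the tautological divisor of $\cO_{\P(\cF)}(1)$ and $r=\operatorname{rk}\cF$. The relative canonical bundle formula gives
$$
K_Y=-r\xi_\cF+p^*(K_X+\det\cF).
$$
Hence
$$
K_Y+\bigl((r+1)\xi_\cF+p^*\cL\bigr)=\xi_\cF+p^*\bigl(K_X+\det\cF+\cL\bigr).
$$
Since $R^ip_*\cO_Y(\xi_\cF)=0$ for $i>0$ and $p_*\cO_Y(\xi_\cF)=\cF$, the Leray spectral sequence and the projection formula identify
$$
H^q\bigl(Y,\cO_Y(K_Y+D)\bigr)\simeq H^q\bigl(X,K_X\tensor\cF\tensor\det\cF\tensor\cL\bigr),\qquad D:=(r+1)\xi_\cF+p^*\cL.
$$
So the theorem follows from Kawamata--Viehweg once $D$ is shown to be nef and big on $Y$.

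The key step is therefore to prove that $D$ is nef and big. The hypothesis that $\cF\tensor\cL$ is nef and big means exactly that $\xi_\cF+p^*\cL$ is nef and big on $Y\simeq\P(\cF\tensor\cL)$. I would write
$$
D=r\,\xi_\cF+\bigl(\xi_\cF+p^*\cL\bigr).
$$
When $\xi_\cF$ is nef, $D$ is a sum of nef divisors, so it is nef. Bigness then follows because $D^{\dim Y}\ge(\xi_\cF+p^*\cL)^{\dim Y}>0$: expand the top power and note that every mixed term is a product of nef classes, hence non-negative.

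I expect this positivity step to be the main obstacle, and it is genuinely delicate. Without some nefness of $\cF$ itself, the decomposition above says nothing. Indeed, the statement read completely literally appears to need such an extra hypothesis. For $r=1$, $X=\P^n$, $\cF=\cO(-1)$ and $\cL=\cO(2)$, the bundle $\cF\tensor\cL=\cO(1)$ is nef and big, but $K_X\tensor\cF\tensor\det\cF\tensor\cL=K_X$ has $H^n\neq0$.

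So I would first check the precise hypotheses in \cite[Theorem~1.6]{LN18} (nefness of $\cF$, or $\cL$ nef). Under such a hypothesis the argument above goes through verbatim. In the applications in this paper, the bundle $\cF$ will be nef (for instance a twist $\cE(t)$ with $\xi+tH$ nef by Lemma~\ref{lem:D-nef-big}) and $\cL$ a non-negative twist, so only this case is needed. If instead one assumes only that $\cF$ is nef and big (the case $\cL=\cO_X$), one takes $D=(r+1)\xi_\cF$, which is nef and big, and the argument is the same.
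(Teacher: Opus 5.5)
Your reading is correct: the statement as quoted is false, and your counterexample works. On $\P^n$ one has $K_{\P^n}\tensor\cO(-1)\tensor\cO(-1)\tensor\cO(2)\simeq K_{\P^n}$, and $H^n(K_{\P^n})\neq 0$. The same failure occurs in rank $2$ with $\cF=\cO_{\P^n}(-1)^{\oplus 2}$ and $\cL=\cO_{\P^n}(2)$. The paper gives only the citation, no proof, and no argument can establish the statement as written. Your Kawamata--Viehweg argument on $\P(\cF)$ is the standard one and is correct when $\xi_\cF$ is nef.

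The natural repair, however, needs no extra hypothesis on $\cF$. Take $D:=(r+1)(\xi_\cF+p^*\cL)$, which is nef and big by assumption. Then $K_Y+D=\xi_\cF+p^*(K_X+\det\cF+(r+1)\cL)$, and you obtain $H^q(X,K_X\tensor\cF\tensor\det\cF\tensor\cL^{\tensor(r+1)})=0$ for $q>0$. This is Griffiths-type vanishing for the nef and big bundle $\cF\tensor\cL$, and presumably the intended form of the cited result.

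Two of your side remarks are wrong.

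First, assuming $\cL$ nef does not rescue the statement. In your own counterexample $\cL=\cO(2)$ is even ample. The decomposition $D=r\xi_\cF+(\xi_\cF+p^*\cL)$ helps only when $\xi_\cF$ itself is nef.

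Second, the paper's only application, Proposition~\ref{prop:c1-0-22}, is not in the nef case. There $\cF=\cE$ has $c_1=0$ and $c_2=(2,2)$. By Proposition~\ref{prop:APWB}, a nef rank $2$ bundle with $c_1=0$ has $0\le a,b\le c_1^2=0$, so $\cE$ is not nef. The correct vanishing gives only $H^q(Q^4,\cE(2))=0$ there, not the claimed $H^q(Q^4,\cE(-2))=0$. In fact the latter can never hold. Since $\cE^*\simeq\cE$, Serre duality gives $h^i(\cE(-2))=h^{4-i}(\cE(-2))$. Hence $1=\chi(\cE(-2))=2h^0-2h^1+h^2$, which forces $h^2(\cE(-2))$ to be odd, hence nonzero.

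So your claim that only the nef case is needed fails. The paper's exclusion of $c_2=(2,2)$ requires a different argument.
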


\subsection{General facts on quadric hypersurfaces}

Finally, we recall the definition of linear-uniform bundles on $Q^n$, Fritzsche's classification results and a general fact about lines on $Q^4$.

\begin{definition}\cite[Definition~2.1]{Fri83}\label{lin}
Let $\mathcal E$ be a rank $2$ vector bundle on $Q^n$. We say that $\mathcal E$ is \emph{linear-uniform of type $\alpha$} if there exists an integer $\alpha\ge 0$ such that for every line $\ell\subset Q^n$, one has $\mathcal E|_\ell\simeq\mathcal O_\ell\oplus \mathcal O_\ell(-\alpha)$. 
\end{definition}

\begin{proposition}\cite[Theorem~3.1~(4), Lemma~3.4]{Fri83}\label{prop:Fritzsche}
Let $\mathcal E$ be a rank $2$ linear-uniform vector bundle on $Q^4$ of type $\alpha$. Then the following hold.
\begin{enumerate}
\item If $\alpha=1$, then $\cE$ is either a split bundle or, up to twisting with a line bundle, one of the two spinor bundles on $Q^4$.
\item If $\alpha\ge 2$, then $\mathcal E$ is a split bundle.
\end{enumerate}
\end{proposition}

\begin{definition}
A \emph{plane} on $Q^4$ is a linear subspace $\Pi\simeq\mathbb P^2$ with $\Pi\subset Q^4$. For a plane \(\Pi\subset Q^4\), we write $H_\Pi:=H_{Q^4}|_\Pi$, which is the hyperplane class on \(\Pi\simeq\mathbb P^2\). In particular, $A^2(\Pi)=\mathbb ZH_\Pi^2$. 

The planes on $Q^4$ form two families, whose classes in $A^2(Q^4)$ are denoted by $\alpha$ and $\beta$, respectively. A plane $\Pi\subset Q^4$ is called an \emph{$\alpha$-plane} (resp.\ a \emph{$\beta$-plane}) if $[\Pi]=\alpha$ (resp.\ $[\Pi]=\beta$) in $A^2(Q^4)$.
\end{definition}

\begin{lemma}\cite[Example~22.7 and Proposition~22.8]{Har92}\label{Hari}
Let $Q^4\subset \mathbb P^5$ be a smooth $4$-dimensional quadric. Then every line $\ell\subset Q^4$ is contained in a unique $\alpha$-plane and in a unique $\beta$-plane.
\end{lemma}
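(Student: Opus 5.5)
The statement immediately above is Lemma~\ref{Hari}: every line $\ell\subset Q^4$ lies in a unique $\alpha$-plane and a unique $\beta$-plane. My plan is to argue directly from the geometry of the quadric by projecting from the line.

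\emph{Reduction to a conic.} Let $\ell\subset Q^4\subset\P^5$ be a line. The planes of $\P^5$ containing $\ell$ are parametrized by $\P(k^6/V_\ell)\simeq\P^3$, where $V_\ell\subset k^6$ is the $2$-dimensional subspace underlying $\ell$. Such a plane $\langle \ell,p\rangle$ lies in $Q^4$ exactly when it lies in the intersection of the tangent hyperplanes $T_xQ^4$ for $x\in\ell$. That intersection is $\P(V_\ell^{\perp})\simeq\P^3$, with respect to the nondegenerate symmetric form $q$ defining $Q^4$.

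I first check that $V_\ell\subset V_\ell^\perp$, which holds because $\ell$ is isotropic. The form $q$ then descends to a nondegenerate quadratic form on $V_\ell^\perp/V_\ell$, which has dimension $2$; nondegeneracy follows from $(V_\ell^\perp)^\perp=V_\ell$. A plane $\langle\ell,p\rangle$ lies in $Q^4$ if and only if $p\in V_\ell^\perp$ and the image of $p$ is isotropic for the induced form.

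A nondegenerate binary quadratic form over an algebraically closed field has exactly two isotropic lines. Hence exactly two planes of $Q^4$ contain $\ell$.

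\emph{Identifying the families.} It remains to show that these two planes $\Pi_1,\Pi_2$ lie in different families. Their intersection is exactly $\ell$, since they are distinct planes both containing $\ell$. I then use the standard fact from \cite[Section~0]{AS92} and \cite[Proposition~22.8]{Har92}: two maximal isotropic subspaces belong to the same family if and only if the codimension of their intersection inside each of them is even.

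Here the codimension of $V_\ell$ in each $3$-dimensional space is $1$, which is odd, so $\Pi_1$ and $\Pi_2$ lie in opposite families. The same conclusion also follows from the Chow ring in Remark~\ref{rem:chow-ring-Q4}. Two planes of the same family meet properly with $\alpha^2=1$, so they meet in a point generically, and in general their intersections have even-codimension parity. Planes of opposite families, with $\alpha\beta=0$, are either disjoint or meet in a line. This parity statement is the only genuinely nontrivial input.

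Consequently one of $\Pi_1,\Pi_2$ is the unique $\alpha$-plane through $\ell$ and the other is the unique $\beta$-plane through $\ell$.

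The main obstacle is justifying the parity criterion for the two families rather than merely citing it. If a self-contained argument is needed, I would identify $Q^4$ with the Grassmannian $G(2,4)$ via the Klein correspondence. Under it, $\alpha$-planes are $\{L\ni p\}$ and $\beta$-planes are $\{L\subset h\}$. A line in $G(2,4)$ is a pencil $\{L: p\in L\subset h\}$ for a unique flag $p\in h$, which exhibits exactly one plane of each type through it.
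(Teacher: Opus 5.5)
Your argument is correct. It differs from the paper only in that the paper gives no argument at all: Lemma~\ref{Hari} is simply quoted from \cite{Har92}.

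Your linear-algebra step gives a clean, self-contained proof that exactly two planes of $Q^4$ contain $\ell$. The induced form on $V_\ell^\perp/V_\ell$ is a nondegenerate binary form, and its two isotropic lines correspond bijectively to the $3$-dimensional totally isotropic subspaces containing $V_\ell$.

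The second half, that the two planes lie in different families, still rests on the parity criterion for maximal isotropic subspaces. That criterion is exactly the kind of statement the paper is citing from \cite{Har92}. So for the ``one of each family'' part, your main line is no more self-contained than the citation. Your Klein-correspondence fallback does settle everything at once, given the standard descriptions of lines and planes in $G(2,4)$. It is arguably the most transparent proof of the lemma.

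There are two small points to fix.

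First, you claim that $\langle\ell,p\rangle\subset Q^4$ exactly when it lies in $\bigcap_{x\in\ell}T_xQ^4$. That condition is only necessary, since $\P(V_\ell^\perp)\simeq\P^3$ is not contained in $Q^4$. The criterion you state next ($p\in V_\ell^\perp$ and the image of $p$ isotropic) is the correct one, so just drop the first claim.

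Second, the Chow-ring aside is not an argument as written. The relations $\alpha^2=1$ and $\alpha\beta=0$ say nothing directly about planes that meet improperly, and two planes sharing $\ell$ do meet improperly. The aside can be made rigorous, and then it replaces the parity criterion using only the facts in Remark~\ref{rem:chow-ring-Q4}. By adjunction, $c_1(N_{\Pi/Q^4})=4H_\Pi-3H_\Pi=H_\Pi$. Hence, by Fulton's excess intersection formula, if $\Pi_1\cap\Pi_2=\ell$ then
$$
\Pi_1\cdot\Pi_2=\deg\bigl(N_{\Pi_2/Q^4}|_\ell\bigr)-\deg N_{\ell/\Pi_1}=1-1=0.
$$
Every plane has class $\alpha$ or $\beta$, and $\alpha^2=\beta^2=1$. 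Therefore the two planes through $\ell$ must have different classes, which is exactly what you need.
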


\section{Numerical restrictions on $c_2(\cE)$}

We first prove the numerical inequalities obtained from Proposition~\ref{prop:APWB}.

\begin{proposition}\label{prop:numerical-c1-0}
Let $\mathcal E$ be a rank $2$ weak Fano bundle on $Q^4$ with $c_1=0$ and $c_2=(a,b)$. Then $-4\le a,b\le 12$ and $a+b\le 8$. Moreover,
\begin{equation}\label{eq:c1-0-top}
a^2+b^2-40(a+b)+160>0.
\end{equation}
\end{proposition}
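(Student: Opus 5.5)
The plan is to apply Proposition~\ref{prop:APWB} to the twisted bundle $\cE(2)$ and then translate its Chern classes back to those of $\cE$.

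\textbf{Step 1: the twist.} Since $c_1=0$, Lemma~\ref{lem:D-nef-big} says that $\xi+2H$ is nef and big. This divisor is the tautological divisor of $\P(\cE(2))\simeq\P(\cE)$, so $\cE':=\cE(2)$ is a nef and big rank $2$ bundle. Its Chern classes are
$$
c_1(\cE')=4H_{Q^4},\qquad c_2(\cE')=c_2(\cE)+4H_{Q^4}^2=(a+4,\,b+4),
$$
where we use $H_{Q^4}^2=\alpha+\beta$ from Remark~\ref{rem:chow-ring-Q4}.

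\textbf{Step 2: the bounds on $a$ and $b$.} Proposition~\ref{prop:APWB} applied to $\cE'$, with $c_1^2=16$, gives $a+4\ge0$, $b+4\ge0$, $16\ge a+4$ and $16\ge b+4$. Hence $-4\le a,b\le 12$.

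\textbf{Step 3: the top-degree inequality.} The last inequality of Proposition~\ref{prop:APWB} reads
$$
(a+4)^2+(b+4)^2-48(a+b+8)+512\ge0 .
$$
Expanding, it becomes $a^2+b^2-40(a+b)+160\ge0$. To make this strict I would identify the left side with a top self-intersection. For a rank $2$ bundle one has $\xi'^5=\deg s_4(\cE')=\deg\bigl(c_1'^4-3c_1'^2c_2'+c_2'^2\bigr)$. Using Remark~\ref{rem:chow-ring-Q4}:
\begin{itemize}
\item $c_1'^4=256H_{Q^4}^4=512$;
\item $c_1'^2c_2'=16(\alpha+\beta)\bigl((a+4)\alpha+(b+4)\beta\bigr)=16(a+b+8)$;
\item $c_2'^2=(a+4)^2+(b+4)^2$.
\end{itemize}
So $(\xi+2H)^5$ equals exactly the APW quantity. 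Bigness of the nef divisor $\xi+2H$ means $(\xi+2H)^5>0$, which gives \eqref{eq:c1-0-top}.

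\textbf{Step 4: the bound $a+b\le8$.} I would use nefness on a lower-degree class: $\xi'^4\cdot H\ge0$, since both $\xi'$ and $H$ are nef. By the projection formula, $\xi'^4\cdot H=\deg\bigl(s_3(\cE')\cdot H_{Q^4}\bigr)$, and for rank $2$ one has $s_3=c_1^3-2c_1c_2$. This gives
$$
c_1'^3\cdot H_{Q^4}=64H_{Q^4}^4=128,\qquad c_1'c_2'\cdot H_{Q^4}=4H_{Q^4}^2\cdot c_2'=4(a+b+8).
$$
Hence $128-8(a+b+8)\ge0$, that is, $a+b\le8$.

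\textbf{Expected obstacle.} The only delicate point is bookkeeping. One must keep the sign conventions for Segre classes consistent with Grothendieck's convention, so that $\xi^{5}$ corresponds to $c_1^4-3c_1^2c_2+c_2^2$. One must also check that this agrees with the polynomial $a^2+b^2-3c_1^2(a+b)+2c_1^4$ in Proposition~\ref{prop:APWB}, where the term $2c_1^4$ accounts for $H_{Q^4}^4=2$.
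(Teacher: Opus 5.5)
Your proposal is correct and follows essentially the same route as the paper. You apply Proposition~\ref{prop:APWB} to the nef bundle $\cE(2)$ to get $-4\le a,b\le 12$, and you use $(\xi+2H)^4\cdot H=64-8(a+b)\ge 0$ to get $a+b\le 8$; the paper phrases this step as nefness of $s_3(\cE(2))$. You then use bigness, $(\xi+2H)^5>0$, for the strict inequality; your remark that $(\xi+2H)^5$ equals the last quantity in Proposition~\ref{prop:APWB} is a consistency check the paper leaves implicit.
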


\begin{proof}
By Lemma~\ref{lem:D-nef-big}, $\xi+2H$ is nef and big. Hence $\cE(2)$ is nef and big. We have $c_1(\cE(2))=4H_{Q^4}$ and $c_2(\cE(2))=c_2(\cE)+4H_{Q^4}^2=(a+4)\alpha+(b+4)\beta$. Proposition~\ref{prop:APWB} gives $a+4\ge 0$ and $b+4\ge 0$. Hence $a,b\ge -4$.

Next, we use the positivity of the third Segre class of $\mathcal E(2)$. Since $s_3(\mathcal E(2))=c_1(\mathcal E(2))^3-2c_1(\mathcal E(2))c_2(\mathcal E(2))$, intersecting this equality with $H_{Q^4}$, we obtain
$$
s_3(\mathcal E(2))\cdot H_{Q^4}=2\cdot 4\{16-(a+b+8)\}.
$$
Since $\mathcal E(2)$ is nef, its Segre classes are nef cycles. Indeed, let $\pi\colon\mathbb P(\mathcal E(2))\to Q^4$ be the natural projection, and set $\zeta:=c_1\bigl(\mathcal O_{\mathbb P(\mathcal E(2))}(1)\bigr)$.
Then $\zeta$ is nef and
\[
s_i(\mathcal E(2))=\pi_*(\zeta^{i+1}).
\] 
Hence $s_i(\mathcal E(2))$ has non-negative intersection with every effective $i$-cycle and is therefore nef. Then $s_3(\mathcal E(2))\cdot H_{Q^4}\ge 0$. It follows that $a+b+8\le 16$. Thus $a+b\le 8$. Moreover, Proposition~\ref{prop:APWB} gives $16\ge a+4$ and $16\ge b+4$. Therefore $a\le 12$ and $b\le 12$.

Finally, since $\xi+2H$ is big, $(\xi+2H)^5>0$. For $c_1=0$, one computes
\begin{align*}
 (\xi+2H)^5 
 &= c_2(\cE)^2-40H^2c_2(\cE)+80H^4 \\
 &= a^2+b^2-40(a+b)+160.
\end{align*}
This gives \eqref{eq:c1-0-top}.
\end{proof}

\begin{proposition}\label{prop:numerical-c1-minus}
Let $\mathcal E$ be a rank $2$ weak Fano bundle on $Q^4$ with $c_1=-1$ and $c_2=(a,b)$. Then $-3\le a,b\le 12$ and $a+b\le 8$. Moreover,
\begin{equation}\label{eq:c1-minus-top}
8a^2+8b^2-324(a+b)+1441>0.
\end{equation}
\end{proposition}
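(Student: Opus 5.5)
The plan is to argue exactly as in Proposition~\ref{prop:numerical-c1-0}, now with the $\mathbb Q$-divisor $\xi+\frac52H$. By Lemma~\ref{lem:D-nef-big} with $c_1=-1$, this divisor is nef and big, so the formal twist $\cE(\tfrac52)$ is a nef and big $\mathbb Q$-twisted bundle. Its Chern classes are
$$
c_1\bigl(\cE(\tfrac52)\bigr)=4H_{Q^4},\qquad c_2\bigl(\cE(\tfrac52)\bigr)=c_2(\cE)+\tfrac52c_1(\cE)H_{Q^4}+\tfrac{25}{4}H_{Q^4}^2=\Bigl(a+\tfrac{15}{4}\Bigr)\alpha+\Bigl(b+\tfrac{15}{4}\Bigr)\beta .
$$

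\textbf{Step 1: extend Proposition~\ref{prop:APWB} to the $\mathbb Q$-twist.} The inequalities there are polynomial in the Chern data, and their proof uses only intersections of the nef class $\zeta=\xi+\frac52H$ with pushforwards of its powers, i.e.\ Segre classes. So they hold verbatim for $\mathbb Q$-twists. If one wants to avoid this, one can pass to a finite cover on which $\tfrac52H$ becomes integral, or redo the intersection computation directly with $\xi+\frac52H$. Checking that this extension is legitimate is the only point needing care. The cheapest route I would try first is to observe that each inequality is obtained as $\zeta^{k}\cdot(\text{effective or nef cycle})\ge0$, and this is insensitive to rational coefficients.

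\textbf{Step 2: bounds on $a$ and $b$.} Granting Step 1:
\begin{itemize}
\item $a+\frac{15}{4}\ge0$ and $b+\frac{15}{4}\ge0$, so the integers $a,b$ satisfy $a,b\ge-3$;
\item $16\ge a+\frac{15}{4}$, so $a\le12$, and likewise $b\le12$.
\end{itemize}
Next, nefness of $s_3(\cE(\tfrac52))=c_1^3-2c_1c_2$ gives, after intersecting with $H_{Q^4}$,
$$
128-8\Bigl(a+b+\tfrac{15}{2}\Bigr)\ge0 ,
$$
so $a+b\le\frac{17}{2}$, and hence $a+b\le8$.

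\textbf{Step 3: the strict inequality from bigness.} Bigness gives $(\xi+\frac52H)^5>0$. I will expand with $t=\frac52$ as $\sum_k\binom5k t^kH^k\xi^{5-k}$ and push forward, using $\pi_*\xi^{1+i}=s_i(\cE)$ with
$$
s_1=-H,\qquad s_2=H^2-c_2,\qquad s_3=-H^3+2Hc_2,\qquad s_4=H^4-3H^2c_2+c_2^2 .
$$
Put $s=a+b$. With $H^4=2$, $H^2c_2=s$ and $c_2^2=a^2+b^2$, the terms are:
\begin{itemize}
\item $k=0$: $a^2+b^2-3s+2$;
\item $k=1$: $5t(2s-2)$;
\item $k=2$: $10t^2(2-s)$;
\item $k=3$: $-20t^3$;
\item $k=4$: $10t^4$.
\end{itemize}
At $t=\frac52$ these sum to $a^2+b^2-\frac{81}{2}s+\frac{1441}{8}$. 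Multiplying by $8$ gives \eqref{eq:c1-minus-top}.
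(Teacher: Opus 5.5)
Your proposal is correct and follows the paper's proof essentially step by step: Proposition~\ref{prop:APWB} applied to the formal twist $\cE(5/2)$ with $c_2=(a+\frac{15}{4})\alpha+(b+\frac{15}{4})\beta$, nefness of $s_3$ to get $a+b\le 8$, and the Segre-class expansion of $(\xi+\frac52H)^5$, and all of your numbers agree with the paper's. One caveat on Step~1: the inequalities $a,b\ge 0$ in Proposition~\ref{prop:APWB} express positivity of $c_2$ on planes (Fulton--Lazarsfeld), which is not an intersection of powers of $\zeta$ with an effective cycle, so your ``cheapest route'' does not cover them; the justification is instead the finite-cover (Bloch--Gieseker) argument you mention, or the $\mathbb Q$-twisted positivity results that the paper cites from Lazarsfeld.
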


\begin{proof}
By Lemma~\ref{lem:D-nef-big}, $\xi+\frac52H$ is nef and big. We apply Proposition~\ref{prop:APWB} to the formal $\mathbb Q$-twist $\cE(5/2)$; see, for example, \cite[Sections~6.2 and~8.1]{Laz04b}. It has
$$
 c_1\left(\cE\left(\frac52\right)\right)=4H_{Q^4},
$$
and
$$
 c_2\left(\cE\left(\frac52\right)\right)=\left(a+\frac{15}{4}\right)\alpha+\left(b+\frac{15}{4}\right)\beta.
$$
Proposition~\ref{prop:APWB} gives $a+\frac{15}{4}\ge 0$ and $b+\frac{15}{4}\ge 0$. Since $a$ and $b$ are integers, this gives $a,b\ge -3$. Next, we use the positivity of the third Segre class of $\mathcal E(5/2)$. Since
$$
s_3\left(\mathcal E\left(\frac52\right)\right)=c_1\left(\mathcal E\left(\frac52\right)\right)^3-2c_1\left(\mathcal E\left(\frac52\right)\right)c_2\left(\mathcal E\left(\frac52\right)\right), 
$$
intersecting this equality with $H_{Q^4}$, we get
$$
s_3\left(\mathcal E\left(\frac52\right)\right)\cdot H_{Q^4}=2\cdot4\left\{16-\left(a+b+\frac{15}{2}\right)\right\}.
$$
Since $\xi+\frac52H$ is nef, the Segre classes of the corresponding formal $\mathbb Q$-twist are nef cycles. Hence
$$
s_3\left(\mathcal E\left(\frac52\right)\right)\cdot H_{Q^4}\ge0.
$$
It follows that
$$
a+b+\frac{15}{2}\le16.
$$
Since $a+b$ is an integer, we obtain
$$
a+b\le8.
$$
Moreover, Proposition~\ref{prop:APWB} gives $16\ge a+\frac{15}{4}$ and $16\ge b+\frac{15}{4}$. Therefore $a\le 12$ and $b\le 12$.

Since $\xi+\frac52H$ is big, $(\xi+\frac52H)^5>0$. A direct Segre class computation gives
$$
 \left(\xi+\frac52H\right)^5=a^2+b^2-\frac{81}{2}(a+b)+\frac{1441}{8}.
$$
Multiplying by $8$ yields \eqref{eq:c1-minus-top}.
\end{proof}

Next, using the Riemann--Roch theorem, we restrict the values of $c_2=(a,b)$.

\begin{lemma}\cite[Section~8]{APW94}\label{lem:RR-c1-0}
Let $\cE$ be a rank $2$ vector bundle on $Q^4$ with $c_2=(a,b)$. If $c_1=0$, then
$$
\chi(Q^4, \cE)=\frac{a^2+b^2-23(a+b)+24}{12} \quad \text{and} \quad\chi(Q^4, \cE(-2))=\frac{a(a+1)+b(b+1)}{12}.
$$
In particular, $a+b\equiv 0\pmod 2$ and $a(a+1)+b(b+1)\equiv 0\pmod {12}$.
\end{lemma}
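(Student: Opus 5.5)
The plan is to apply Hirzebruch--Riemann--Roch on $Q^4$, using the intersection numbers of Remark~\ref{rem:chow-ring-Q4}, and then to read off the congruences from integrality of Euler characteristics. Write $h:=H_{Q^4}$.

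\emph{Todd class of $Q^4$.} From the normal bundle sequence $0\to T_{Q^4}\to T_{\P^5}|_{Q^4}\to\cO_{Q^4}(2)\to0$ and the Euler sequence we get
$$c(T_{Q^4})=\frac{(1+h)^6}{1+2h}.$$
Expanding, $c_1=4h$ and $c_2=(15-12+4)h^2=7h^2$. Hence
$$\mathrm{td}(Q^4)=1+2h+\tfrac{23}{12}h^2+\tfrac{7}{6}h^3+\mathrm{td}_4,$$
using $\mathrm{td}_2=(c_1^2+c_2)/12$ and $\mathrm{td}_3=c_1c_2/24$. The top term satisfies $\int\mathrm{td}_4=\chi(\cO_{Q^4})=1$, so $\mathrm{td}_4=\tfrac12h^4$ because $h^4=2$.

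\emph{Chern character and Euler characteristics.} For $c_1=0$ the Newton formulas give $\mathrm{ch}_2=-c_2$, $\mathrm{ch}_3=0$ and $\mathrm{ch}_4=c_2^2/12$. Thus
$$\mathrm{ch}(\cE(t))=\bigl(2-c_2+\tfrac{1}{12}c_2^2\bigr)e^{th}.$$
We use $c_2\cdot h^2=a+b$ and $c_2^2=a^2+b^2$ from Remark~\ref{rem:chow-ring-Q4}. The part $2e^{th}\mathrm{td}$ contributes $2\chi(\cO_{Q^4}(t))$. The part $-c_2e^{th}\mathrm{td}$ only sees the degree-$2$ coefficient $\tfrac{23}{12}+2t+\tfrac{t^2}{2}$ of $e^{th}\mathrm{td}$. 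The last part contributes $(a^2+b^2)/12$. This gives
$$\chi(\cE(t))=2\chi(\cO(t))-\Bigl(\tfrac{23}{12}+2t+\tfrac{t^2}{2}\Bigr)(a+b)+\frac{a^2+b^2}{12}.$$
We have $\chi(\cO)=1$ and $\chi(\cO(-1))=\chi(\cO(-2))=0$ by Kodaira vanishing and Serre duality, since $K_{Q^4}=-4h$. Substituting $t=0$ and $t=-2$, where the degree-$2$ coefficients are $\tfrac{23}{12}$ and $-\tfrac1{12}$, yields the two stated formulas. For $t=-1$ the coefficient is $\tfrac{5}{12}$, which gives $\chi(\cE(-1))=(a^2+b^2-5(a+b))/12$.

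\emph{Congruences.} Integrality of $\chi(\cE(-2))$ gives $a(a+1)+b(b+1)\equiv0\pmod{12}$. For the parity statement, $\chi(\cE(-2))$ alone does not suffice; for instance $(a,b)=(3,0)$ satisfies it. Instead I would use
$$\chi(\cE(-1))-\chi(\cE(-2))=-\frac{a+b}{2}\in\Z,$$
so $a+b$ is even.

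The only delicate steps are the coefficient $c_2(T_{Q^4})=7h^2$ and getting the parity from the right pair of twists. Everything else is routine bookkeeping.
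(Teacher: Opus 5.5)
Your proposal is correct and follows the paper's proof. It uses the same Hirzebruch--Riemann--Roch computation with the same Todd class and Chern character, which you additionally derive from $c(T_{Q^4})=(1+H_{Q^4})^6/(1+2H_{Q^4})$, and the same integrality of $\chi(Q^4,\cE(-2))$ for the congruence modulo $12$. For parity, the paper applies Riemann--Roch to $\cE|_{Q^3}(-1)$ on a smooth hyperplane section, whereas you use $\chi(\cE(-1))-\chi(\cE(-2))=-\frac{a+b}{2}$ on $Q^4$. By the restriction sequence $0\to\cE(-2)\to\cE(-1)\to\cE|_{Q^3}(-1)\to0$ these are the same number, so your route just avoids needing the Todd class of $Q^3$.
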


\begin{proof}
By the Riemann--Roch theorem, we obtain the formulas as in \cite[Section~8]{APW94}. For the sake of completeness, we indicate the calculation. The Todd class of $Q^4$ is
$$
 \operatorname{td}(Q^4)=1+2H_{Q^4}+\frac{23}{12}H_{Q^4}^2+\frac76H_{Q^4}^3+\frac12H_{Q^4}^4.
$$
For $c_1=0$,
$$
 \operatorname{ch}(\cE)=2-c_2(\cE)+\frac{c_2(\cE)^2}{12}.
$$
Using Remark~\ref{rem:chow-ring-Q4}, we obtain the stated formulas. 

It remains to prove the two congruences. Let $i:Q^3\hookrightarrow Q^4$ be a smooth hyperplane section, set $\cF:=i^*\mathcal E$ and let $h:=i^*H_{Q^4}$. Let $\ell\in A^2(Q^3)$ denote the class of a line. Then $c_1(\cF)=0$ and $c_2(\cF)=(a+b)\ell$. We apply the Riemann--Roch theorem to $\cF(-1)$. We have
$$
\operatorname{td}(Q^3)=1+\frac32h+\frac{13}{12}h^2+\frac12h^3.
$$
Since $\cF$ has rank $2$ and $c_1(\cF)=0$, we have
$$
\operatorname{ch}(\cF)=2-c_2(\cF)=2-(a+b)\ell.
$$
Consequently,
$$
\operatorname{ch}(\cF(-1))=2-2h+\bigl(h^2-(a+b)\ell\bigr)-\frac13h^3+(a+b)h\ell.
$$
Thus the Riemann--Roch theorem gives
$$
\chi(Q^3,\cF(-1))=-\frac{a+b}{2}.
$$
Since $\chi(Q^3,\cF(-1))$ is an integer, it follows that $a+b\equiv0\pmod 2$. Since $\chi(Q^4, \mathcal E(-2))=\frac{a(a+1)+b(b+1)}{12}$ is an integer, $a(a+1)+b(b+1)\equiv0\pmod{12}$.
\end{proof}

\begin{lemma}\cite[Section~8]{APW94}\label{lem:RR-c1-minus}
Let $\cE$ be a rank $2$ vector bundle on $Q^4$ with $c_2=(a,b)$. If $c_1=-1$, then
$$
\chi(Q^4, \cE)=\frac{a^2+b^2-13(a+b)+12}{12}.
$$
In particular, $a^2+b^2-13(a+b)\equiv 0\pmod {12}$.
\end{lemma}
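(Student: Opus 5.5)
The plan is to prove the Euler characteristic formula by a direct Hirzebruch--Riemann--Roch computation on $Q^4$, as in the proof of Lemma~\ref{lem:RR-c1-0}, and then read off the congruence from integrality.

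\textbf{Step 1: the Chern character.} For rank $2$ with $c_1(\cE)=-H_{Q^4}$, write $c_2:=c_2(\cE)=a\alpha+b\beta$. The Newton formulas give
$$
\operatorname{ch}(\cE)=2-H_{Q^4}+\frac{H_{Q^4}^2-2c_2}{2}+\frac{-H_{Q^4}^3+3H_{Q^4}c_2}{6}+\frac{H_{Q^4}^4-4H_{Q^4}^2c_2+2c_2^2}{24}.
$$
I will use the Todd class already recorded in the proof of Lemma~\ref{lem:RR-c1-0},
$$
\operatorname{td}(Q^4)=1+2H_{Q^4}+\tfrac{23}{12}H_{Q^4}^2+\tfrac76H_{Q^4}^3+\tfrac12H_{Q^4}^4.
$$

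\textbf{Step 2: intersection numbers.} By Remark~\ref{rem:chow-ring-Q4}, the only numbers needed are
$$
\deg H_{Q^4}^4=2,\qquad \deg(H_{Q^4}^2c_2)=a+b,\qquad \deg(c_2^2)=a^2+b^2.
$$
Here $H_{Q^4}^2c_2=(\alpha+\beta)(a\alpha+b\beta)$.

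\textbf{Step 3: collect the degree-$4$ terms.} Set $s:=a+b$ and $q:=a^2+b^2$. The degree-$4$ part of $\operatorname{ch}(\cE)\cdot\operatorname{td}(Q^4)$ has the following contributions:
\begin{itemize}
\item $\operatorname{ch}_4$ contributes $(2-4s+2q)/24$;
\item $\operatorname{ch}_3\cdot 2H_{Q^4}$ contributes $(-2+3s)/3$;
\item $\operatorname{ch}_2\cdot\frac{23}{12}H_{Q^4}^2$ contributes $23(1-s)/12$;
\item $\operatorname{ch}_1\cdot\frac76H_{Q^4}^3$ contributes $-7/3$;
\item $2\cdot\frac12H_{Q^4}^4$ contributes $2$.
\end{itemize}
Summing over the common denominator $12$ should give $(q-13s+12)/12$, which is the stated formula. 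This bookkeeping of fractions is the only place an error could creep in, so it is the step I would double-check.

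\textbf{Step 4: the congruence.} Since $\chi(Q^4,\cE)$ is an integer, $a^2+b^2-13(a+b)+12\equiv 0\pmod{12}$. Dropping the $12$ gives $a^2+b^2-13(a+b)\equiv 0\pmod{12}$. No auxiliary hyperplane-section argument should be needed here, unlike the parity statement in Lemma~\ref{lem:RR-c1-0}.
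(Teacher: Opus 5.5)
Your proposal is correct and is essentially the paper's proof: Hirzebruch--Riemann--Roch with the same Chern character and the same Todd class of $Q^4$, followed by integrality of $\chi(Q^4,\cE)$. The five degree-$4$ contributions you list are right and sum to $(2q-26s+24)/24=(q-13s+12)/12$; as a quick check, $\cO_{Q^4}\oplus\cO_{Q^4}(-1)$ gives $\chi=1$ and a spinor bundle gives $\chi=0$.
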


\begin{proof}
Again, by the Riemann--Roch theorem, we obtain the formula as in
\cite[Section~8]{APW94}. Indeed, we use
$$
 \operatorname{ch}(\cE)=2-H_{Q^4}+\frac{H_{Q^4}^2-2c_2(\cE)}{2}+\frac{-H_{Q^4}^3+3c_2(\cE)H_{Q^4}}{6}
 +\frac{H_{Q^4}^4-4c_2(\cE)H_{Q^4}^2+2c_2(\cE)^2}{24}
$$
together with the Todd class $\operatorname{td}(Q^4)$. Since \(\chi(Q^4, \cE)\) is an integer, the stated congruence follows.
\end{proof}

Summarizing the results of this section, we obtain the following list of possible values of $c_2=(a,b)$.

\begin{proposition}\label{prop:cases}
Let $\mathcal E$ be a normalized rank $2$ weak Fano bundle on $Q^4$ with $c_2=(a,b)$. Then the possible values of $(a,b)$ are as follows.

If $c_1=0$, then
\begin{align*}
(a,b)\in
\{&(-4,-4),(-1,-1), (-4,0),(0,-4),(-3,-3),(-4,8),(8,-4),\\
&(-3,5),(5,-3),(0,0), (-1,3),(3,-1), (-3,9),(9,-3), (2,2)\}.
\end{align*}

If $c_1=-1$, then
\begin{align*}
(a,b)\in
\{&(-3,-3),(-3,0),(0,-3), (-3,1), (1,-3), (-3,4),(4,-3),(-3,9),(9,-3),\\
&(-2,-2),(-2,3), (3,-2),(0,0),(0,4),(4,0), (-2,6),(6,-2), (-2,7), (7,-2),\\
&(1,0),(0,1),(1,1)\}.
\end{align*}
\end{proposition}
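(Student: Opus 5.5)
The plan is to prove Proposition~\ref{prop:cases} by a finite enumeration. It combines the numerical bounds of Propositions~\ref{prop:numerical-c1-0} and~\ref{prop:numerical-c1-minus} with the integrality conditions of Lemmas~\ref{lem:RR-c1-0} and~\ref{lem:RR-c1-minus}. No new geometric input is needed. Since every condition is symmetric under $(a,b)\mapsto(b,a)$, it suffices to treat $a\le b$ and then symmetrize.

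\emph{Case $c_1=0$.} The conditions are:
\begin{itemize}
\item $-4\le a,b\le 12$ and $a+b\le 8$;
\item $a+b$ is even;
\item $a(a+1)+b(b+1)\equiv 0\pmod{12}$;
\item the strict inequality \eqref{eq:c1-0-top}.
\end{itemize}
First tabulate $a(a+1)\bmod 12$ for $-4\le a\le 12$. It takes the value $0$ exactly for $a\in\{-4,-1,0,3,8,11,12\}$ and the value $6$ exactly for $a\in\{-3,2,5,6,9\}$. The remaining residues are $2$ and $8$, and no two of them sum to $0 \pmod{12}$, so such $a$ never occur. Hence $(a,b)$ must lie in $S_0\times S_0$ or in $S_6\times S_6$, where $S_0$ and $S_6$ are the two sets above.

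Next, impose $a+b\le 8$ and the parity condition. Parity is automatic within $S_6$, and within $S_0$ it forces $a\equiv b \pmod 2$. Then discard the pairs violating \eqref{eq:c1-0-top}. For instance, $(0,8)$ gives $64-320+160<0$, and $(3,3)$ gives $18-240+160<0$. Presumably $(0,4)$, $(-1,7)$ and $(-4,12)$ are also killed by the top inequality. Checking the handful of survivors should give exactly the fifteen listed pairs.

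\emph{Case $c_1=-1$.} The conditions are:
\begin{itemize}
\item $-3\le a,b\le 12$ and $a+b\le 8$;
\item $a^2+b^2-13(a+b)\equiv 0\pmod{12}$, i.e.\ $f(a)+f(b)\equiv 0\pmod{12}$ with $f(t)=t^2-13t$;
\item the inequality \eqref{eq:c1-minus-top}.
\end{itemize}
Again tabulate $f(t)\bmod 12$ for $-3\le t\le 12$. Then list the pairs of residues summing to $0$ and intersect with the triangle $a+b\le 8$. Finally discard those violating $8a^2+8b^2-324(a+b)+1441>0$. This inequality cuts off the pairs with $a,b$ both moderately positive, for example $(4,4)$ and $(1,7)$, and leaves the twenty-two listed pairs.

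The only real obstacle is bookkeeping. One must make sure no pair is missed in the residue tables and that each borderline case of the quadratic inequalities is evaluated correctly, especially near $a+b=8$, where the inequality is close to equality. I would organize the check by the value of $a+b$. For fixed $s=a+b$, the inequality reads $a^2+b^2>40s-160$ (resp.\ $8(a^2+b^2)>324s-1441$), and $a^2+b^2$ is minimized at $a=b$. So for each $s$ one obtains a threshold on $|a-b|$, which makes the elimination transparent.
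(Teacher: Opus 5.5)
Your approach is the same as the paper's: a finite enumeration using the bounds, the Riemann--Roch congruences and the top-degree inequalities. Your residue tables and the reduction to $S_0\times S_0$ or $S_6\times S_6$ are also correct. However, the one shortcut you take in the $c_1=0$ case is false, and it changes the output. You claim parity is automatic within $S_6=\{-3,2,5,6,9\}$. It is not: $-3,5,9$ are odd and $2,6$ are even. If you skip the parity test on $S_6\times S_6$, the pairs $(-3,2)$ and $(-3,6)$ and their transposes survive every other condition. Indeed $-4\le a,b\le 12$ and $a+b\le 8$ hold. Also $a(a+1)+b(b+1)=12$, resp.\ $48$, is $\equiv 0\pmod{12}$. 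Finally \eqref{eq:c1-0-top} holds, with values $213$ and $85$. Only $a+b\equiv 0\pmod 2$ removes them, so your procedure as written yields nineteen pairs, not fifteen. You must impose $a\equiv b\pmod 2$ on $S_6\times S_6$ as well.

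With that fix the count comes out right. For $a\le b$, the candidates from $S_0$ are $(-4,-4),(-4,0),(-4,8),(-4,12),(-1,-1),(-1,3),(0,0),(0,8),(3,3)$. From $S_6$ they are $(-3,-3),(-3,5),(-3,9),(2,2),(2,6)$. The inequality \eqref{eq:c1-0-top} removes $(0,8)$, $(3,3)$ and $(2,6)$, with values $-96$, $-62$ and $-120$. It also removes $(-4,12)$, where the value is exactly $0$, so strictness is essential there. Your guesses about $(0,4)$ and $(-1,7)$ are misplaced. Neither is a candidate, since $4$ and $7$ have $a(a+1)\equiv 8\pmod{12}$. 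In fact $(0,4)$ satisfies \eqref{eq:c1-0-top}, with value $16$. For $c_1=-1$ your plan works as stated. The candidates with $a\le b$ come from $\{-3,0,1,4,9,12\}$ and $\{-2,3,6,7,10\}$. The inequality removes exactly $(1,4)$, $(4,4)$, $(-2,10)$ and $(3,3)$, leaving the twenty-two listed pairs. Here too $(1,7)$ is excluded by the congruence, not by the inequality.
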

\begin{proof}
For $c_1=0$, the assertion follows from
Proposition~\ref{prop:numerical-c1-0},
inequality~\eqref{eq:c1-0-top}, and
Lemma~\ref{lem:RR-c1-0}.
For $c_1=-1$, the assertion follows from
Proposition~\ref{prop:numerical-c1-minus},
inequality~\eqref{eq:c1-minus-top}, and
Lemma~\ref{lem:RR-c1-minus}.
\end{proof}

\section{Key lemmas}
In this section, we prove lemmas that will be used in the following sections.

\subsection{The Koszul sequence associated with a section}

\begin{lemma}\label{lem:koszul-section}
Let $X$ be a smooth projective variety, let $\cE$ be a rank $2$ vector bundle on $X$, and let $0\neq s\in H^0(X,\cE)$. Let $s^*\colon \cE^*\longrightarrow\mathcal O_X$ be the homomorphism dual to $s$, and let $Z:=Z(s)$ be the closed subscheme defined by the ideal sheaf $\mathcal I_Z:=\operatorname{Im}(s^*)$. Assume that $Z$ is either empty or of pure codimension $2$ in $X$. Then there is an exact sequence
$$
0\longrightarrow\mathcal O_X\xrightarrow{s}\cE\longrightarrow\mathcal I_Z\otimes\det \cE\longrightarrow0.
$$
\end{lemma}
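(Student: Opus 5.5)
The plan is to use the Koszul complex of the section $s$ and exploit the codimension assumption through depth, so that the complex becomes exact. Since $\cE$ has rank $2$, the Koszul complex attached to $s^*\colon\cE^*\to\mathcal O_X$ is
$$
0\longrightarrow \textstyle\bigwedge^2\cE^*\longrightarrow \cE^*\xrightarrow{s^*}\mathcal O_X.
$$
Here the first map sends a local section $\varphi$ to the contraction $\iota_s\varphi$, that is, $\varphi(s\wedge\cdot)$. The image of the last map is $\mathcal I_Z$ by definition, so the complex is
$$
0\to\det\cE^{-1}\to\cE^*\to\mathcal I_Z\to0,
$$
and the work is to show this is exact.

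\textbf{Case $Z=\emptyset$.} Then $s$ is nowhere vanishing, so locally it is part of a frame $s,e$ of $\cE$. With dual frame $s^\vee,e^\vee$, the complex is the split sequence $0\to\mathcal O\cdot(s^\vee\wedge e^\vee)\to\mathcal O s^\vee\oplus\mathcal O e^\vee\to\mathcal O\to0$. This is exact, and we have $\mathcal I_Z=\mathcal O_X$.

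\textbf{Case $Z$ of pure codimension $2$.} Work locally, with $\cE\simeq\mathcal O^2$ and $s=(f,g)$. Then $\mathcal I_Z=(f,g)$ and the complex is the Koszul complex
$$
0\to\mathcal O\xrightarrow{(-g,f)}\mathcal O^2\xrightarrow{(f,g)}\mathcal O.
$$
Because $X$ is smooth, the local rings are regular, hence Cohen--Macaulay. In a Cohen--Macaulay local ring, $\operatorname{ht}(f,g)=2$ implies that $f,g$ is a regular sequence. The height condition is exactly the pure codimension $2$ assumption at points of $Z$. Away from $Z$ we are in the previous case. Koszul exactness for a regular sequence then gives
$$
0\to\det\cE^*\to\cE^*\to\mathcal I_Z\to0
$$
exact. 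The local identification is compatible with the global maps, so this holds globally.

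\textbf{Dualizing.} Tensor this sequence by $\det\cE$ and use $\cE^*\otimes\det\cE\simeq\cE$ for rank $2$, via $\cE\simeq\bigwedge^1\cE\simeq\cE^*\otimes\bigwedge^2\cE$. This gives
$$
0\to\mathcal O_X\to\cE\to\mathcal I_Z\otimes\det\cE\to0.
$$
It remains to check that the first map is $s$. Under $\cE^*\otimes\det\cE\simeq\cE$, the element $\varphi\otimes(v\wedge w)$ corresponds to $\varphi(v)w-\varphi(w)v$. The map $\det\cE^*\otimes\det\cE=\mathcal O\to\cE$ sends $1$ to $\pm s$, which I would verify in the local frame. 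Replacing $s$ by $-s$ if necessary, which changes neither the image nor $Z$, we obtain the stated map $s$.

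\textbf{Main obstacle.} The only substantive point is the regular-sequence claim: in a Cohen--Macaulay local ring, depth equals height for ideals. I expect this step to carry the argument, and I will cite it rather than prove it. The sign and identification checks are routine.
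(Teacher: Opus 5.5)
Your proposal is correct and follows essentially the same route as the paper. The paper also writes down the Koszul complex of $s^*$, handles $Z=\emptyset$ by a stalkwise computation, and for pure codimension $2$ cites \cite[Exercise~17.20]{Eis95} for exactness, which your Cohen--Macaulay/regular-sequence argument simply unpacks; it then tensors with $\det\cE$ and uses $\cE^*\otimes\det\cE\simeq\cE$ as you do. Your explicit handling of the possible sign in that last identification is harmless, since $s$ and $-s$ have the same image, and it is slightly more careful than the paper's bare assertion that the first map is precisely $s$.
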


\begin{proof}
Set $\mathcal O_Z:=\mathcal O_X/\mathcal I_Z$. We consider the complex 
$$
0\longrightarrow\bigwedge\nolimits^2 \cE^*\longrightarrow\cE^*\xrightarrow{s^*}\mathcal O_X\longrightarrow\mathcal O_Z\longrightarrow0
$$
which is the Koszul complex associated with $s^*$ as in \cite[Exercise~17.20]{Eis95}. We show that this complex is exact. First, suppose that $Z=\emptyset$. Then $s$ is nowhere vanishing. Considering the stalk at an arbitrary point $x\in X$, the section $s$ is represented by $\langle f,g\rangle\in\mathcal O_{X,x}^{\oplus2}$, where one of $f$ and $g$ is a unit. Hence the stalk of the Koszul complex at $x$ is
$$
0\longrightarrow\mathcal O_{X,x}\xrightarrow{\binom{-g}{f}}\mathcal O_{X,x}^{\oplus2}\xrightarrow{(f\ \ g)}\mathcal O_{X,x}\longrightarrow0
$$
which is exact. Since exactness can be checked on stalks, the Koszul complex is exact in this case.

Next, suppose that $Z\neq\emptyset$. Since $\operatorname{codim}_X Z=2$ and $X$ is smooth, the Koszul complex
$$
0\longrightarrow\bigwedge\nolimits^2 \cE^*\longrightarrow\cE^*\xrightarrow{s^*}\mathcal O_X\longrightarrow\mathcal O_Z\longrightarrow0
$$
is a locally free resolution of $\mathcal O_Z$ by \cite[Exercise~17.20]{Eis95}. Consequently, it is exact.

Thus the Koszul complex is exact in both cases. Since $\mathcal I_Z=\ker\bigl(\mathcal O_X\rightarrow\mathcal O_Z\bigr)=\operatorname{Im}(s^*)$, we obtain
$$
0\longrightarrow\bigwedge\nolimits^2 \cE^*\longrightarrow\cE^*\longrightarrow\mathcal I_Z\longrightarrow0.
$$
Tensoring this sequence by $\det\cE$ and using $\bigwedge\nolimits^2\cE^*\simeq(\det\cE)^{-1}$, we obtain
$$
0\longrightarrow\mathcal O_X\longrightarrow\cE^*\otimes\det \cE\longrightarrow\mathcal I_Z\otimes\det \cE\longrightarrow0.
$$

For a rank $2$ vector bundle, there is a natural isomorphism $\cE^*\otimes\det \cE\simeq \cE$. Under this isomorphism, the first homomorphism in the last exact sequence is precisely the section $s\colon\mathcal O_X\longrightarrow \cE$. Therefore,
$$
0\longrightarrow\mathcal O_X\xrightarrow{s}\cE\longrightarrow\mathcal I_Z\otimes\det \cE\longrightarrow0
$$
is exact.
\end{proof}

\subsection{Nef vector bundles on $\P^2$}

Throughout this subsection, we identify $A^2(\P^2)\simeq \mathbb Z$ via the degree map, and accordingly identify the second Chern class of a vector bundle on \(\P^2\) with its degree.
\begin{lemma}\label{lem:P2-c1-0-negative}
Let $\cF$ be a rank $2$ vector bundle on $\mathbb P^2$ with $c_1=0$. Assume that $\cF(2)$ is nef. Then the following hold.
\begin{enumerate}
\item If $c_2=-1$, then $\cF\simeq \mathcal O_{\mathbb P^2}(1)\oplus\mathcal O_{\mathbb P^2}(-1)$. 
\item If $c_2=-3$, then there is no such $\cF$.
\end{enumerate}
\end{lemma}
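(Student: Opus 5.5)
The plan is to show that the negativity of $c_2$ forces $\cF$ to be unstable, to write $\cF$ as an extension via a maximal destabilizing section, and then to use nefness of $\cF(2)$ to bound the twist and kill the zero locus. Throughout, $\cF$ has rank $2$ on $\P^2$ with $c_1=0$ and $c_2<0$.

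\emph{Step 1: $\cF$ is not semistable.} By Proposition~\ref{prop:bogo} with $r=2$ and $c_1=0$, an $H_{\P^2}$-semistable $\cF$ would satisfy $c_2\ge 0$. Hence $\cF$ is not semistable. By Proposition~\ref{prop:stable-section}~(2) this gives $H^0(\P^2,\cF(-1))\neq 0$. Since $h^0(\cF(-k))=0$ for $k\gg0$, we may choose the maximal integer $k\ge1$ with $H^0(\P^2,\cF(-k))\ne0$. A nonzero section $s$ of $\cF(-k)$ then has zero locus $Z$ that is either empty or of pure codimension $2$ (i.e.\ zero-dimensional): a divisorial component $D$ would give a section of $\cF(-k-\deg D)$, contradicting maximality.

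\emph{Step 2: the extension.} Lemma~\ref{lem:koszul-section} applied to $\cF(-k)$, with $\det\cF(-k)=\cO(-2k)$, and then twisted by $k$, gives
\[
0\longrightarrow\cO_{\P^2}(k)\longrightarrow\cF\longrightarrow\cI_Z(-k)\longrightarrow0 .
\]
Comparing Chern classes yields $c_2=-k^2+\deg Z$.

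\emph{Step 3: using nefness of $\cF(2)$.} Twisting by $2$ gives a surjection $\cF(2)\twoheadrightarrow\cI_Z(2-k)$. Restricting to a general line $L$ disjoint from the finite set $Z$ produces a quotient line bundle $\cO_L(2-k)$ of the nef bundle $\cF(2)|_L$. Lemma~\ref{lem:nef-quotient-curve} then forces $k\le 2$. If $k=2$, the surjection reads $\cF(2)\twoheadrightarrow\cI_Z$, so Lemma~\ref{lem:IZ-nef} gives $Z=\emptyset$. We now run through the two cases.
\begin{itemize}
\item For $c_2=-1$: the case $k=1$ gives $\deg Z=0$, so $Z=\emptyset$. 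The case $k=2$ would need $\deg Z=3$, contradicting $Z=\emptyset$.
\item For $c_2=-3$: the case $k=1$ would need $\deg Z=-2<0$, which is impossible. The case $k=2$ would need $\deg Z=1$, again contradicting $Z=\emptyset$. Hence no such $\cF$ exists, proving (2).
\end{itemize}

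\emph{Step 4: splitting in case (1).} For $c_2=-1$ we are left with an extension
\[
0\to\cO(1)\to\cF\to\cO(-1)\to0 .
\]
It splits because $\operatorname{Ext}^1(\cO(-1),\cO(1))=H^1(\P^2,\cO(2))=0$, so $\cF\simeq\cO(1)\oplus\cO(-1)$.

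I expect the only delicate point to be Step 1's justification that $Z$ has the right codimension, which is handled by the maximality of $k$; everything else is a direct application of the lemmas above.
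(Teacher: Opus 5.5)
Your proposal is correct and follows essentially the same route as the paper: Bogomolov instability gives $H^0(\P^2,\cF(-1))\neq0$, a maximal twist $k$ yields the Koszul extension $0\to\cO_{\P^2}(k)\to\cF\to\cI_Z(-k)\to0$ with $c_2=-k^2+\deg Z$, nefness of $\cF(2)$ on a line missing $Z$ forces $k\le2$, Lemma~\ref{lem:IZ-nef} excludes $k=2$, and the $c_2=-1$ extension splits because $H^1(\P^2,\cO_{\P^2}(2))=0$. The only cosmetic difference is that you bound $k$ directly from the quotient $\cO_L(2-k)$ of $\cF(2)|_L$, whereas the paper first shows $\cF|_\ell\simeq\cO_\ell(k)\oplus\cO_\ell(-k)$.
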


\begin{proof}
Since $c_2<0$ in both cases, $\cF$ is not
$H_{\P^2}$-semistable by Proposition~\ref{prop:bogo}.
By Proposition~\ref{prop:stable-section}~(ii),
$$
H^0(\mathbb P^2,\cF(-1))\neq0.
$$
Hence there exists an integer $k>0$ such that
$$
H^0(\mathbb P^2,\cF(-k))\neq0.
$$
Let $k$ be maximal with this property, and choose a nonzero section
$$
0\neq s\in H^0(\mathbb P^2,\cF(-k)).
$$
Let $Z:=Z(s)$ be its scheme theoretic zero locus. The section $s$ induces a nonzero morphism
$$
\varphi\colon\mathcal O_{\mathbb P^2}\longrightarrow \cF(-k).
$$
Since $\varphi$ is nonzero, $\ker(\varphi)$ has rank zero and is therefore a torsion subsheaf of $\mathcal O_{\mathbb P^2}$. Since $\mathcal O_{\mathbb P^2}$ is torsion-free, we obtain $\ker(\varphi)=0$. Thus $\varphi$ is injective. 

We claim that $Z$ has no one-dimensional component. Indeed, if $s$ vanished along a nonzero effective divisor $D$ of degree $m>0$, then $s$ would determine a nonzero section of $\cF(-k-m)$, and hence
$$
H^0(\mathbb P^2,\cF(-k-m))\neq0,
$$
contrary to the maximality of $k$.

Consequently, $Z$ is either empty or a zero-dimensional subscheme. Lemma~\ref{lem:koszul-section} gives an exact sequence
$$
0\longrightarrow\mathcal O_{\mathbb P^2}\xrightarrow{s}\cF(-k)\longrightarrow\mathcal I_Z\otimes\det(\cF(-k))\longrightarrow0.
$$
Since $c_1=0$, one has
$$
\det(\cF(-k))\simeq\mathcal O_{\mathbb P^2}(-2k).
$$
After tensoring by $\mathcal O_{\mathbb P^2}(k)$, we obtain
$$
0\longrightarrow\mathcal O_{\mathbb P^2}(k)\longrightarrow\cF\longrightarrow\mathcal I_Z(-k)\longrightarrow0.
$$
Computing the second Chern class, we get
$$
c_2=-k^2+\deg Z.
$$

Since $Z$ is either empty or zero-dimensional, we can choose a line $\ell\subset \mathbb P^2$ such that $\ell\cap Z=\emptyset$. Restricting the exact sequence to $\ell$, we get
$$
0\to \mathcal O_\ell(k)\to \cF|_\ell\to \mathcal O_\ell(-k)\to0.
$$
Moreover, $\operatorname{Ext}^1_\ell(\mathcal O_\ell(-k),\mathcal O_\ell(k))=H^1(\ell,\mathcal O_\ell(2k))=0$. Thus
$$
\cF|_\ell\simeq \mathcal O_\ell(k)\oplus\mathcal O_\ell(-k).
$$
Since $\cF(2)$ is nef, we have $2-k\ge0$. Thus $k\le2$. 

If $c_2=-1$, then $-1=-k^2+\deg Z$, so $\deg Z=k^2-1$. Since $k\le2$, we have $k=1$ or $k=2$. If $k=2$, then $\deg Z=3$. Tensoring the exact sequence by $\mathcal O_{\mathbb P^2}(2)$, we get a surjection $\cF(2)\twoheadrightarrow \cI_Z$. Since $Z\neq\emptyset$, this contradicts Lemma~\ref{lem:IZ-nef}. Hence $k=1$. Then $\deg Z=0$, so $Z=\emptyset$. Thus we have
$$
0\to \mathcal O_{\mathbb P^2}(1)\to \cF\to
\mathcal O_{\mathbb P^2}(-1)\to0.
$$
Since $\operatorname{Ext}^1(\mathcal O_{\mathbb P^2}(-1),\mathcal O_{\mathbb P^2}(1))=H^1(\mathbb P^2,\mathcal O_{\mathbb P^2}(2))=0$, the sequence splits. Hence $\cF\simeq \mathcal O_{\mathbb P^2}(1)\oplus\mathcal O_{\mathbb P^2}(-1)$. 

If $c_2=-3$, then $-3=-k^2+\deg Z$, so $\deg Z=k^2-3$. Since $\deg Z\ge0$ and $k\le2$, we must have $k=2$ and $\deg Z=1$. Tensoring the exact sequence by $\mathcal O_{\mathbb P^2}(2)$, we again obtain a surjection $\cF(2)\twoheadrightarrow \cI_Z$. Since $Z\neq\emptyset$, this contradicts Lemma~\ref{lem:IZ-nef}. Thus the case $c_2=-3$ does not occur.
\end{proof}

\begin{lemma}\label{lem:P2-c1-minus-negative}
Let $\cF$ be a rank $2$ vector bundle on $\mathbb P^2$ with $c_1=-1$. Assume that the formal $\mathbb Q$-twist $\cF(5/2)$ is nef. Then the following hold.
\begin{enumerate}
\item If $c_2=0$, then $\cF\simeq \mathcal O_{\mathbb P^2}\oplus\mathcal O_{\mathbb P^2}(-1)$. 
\item If $c_2=-2$, then $\cF\simeq \mathcal O_{\mathbb P^2}(1)\oplus\mathcal O_{\mathbb P^2}(-2)$. 
\end{enumerate}
\end{lemma}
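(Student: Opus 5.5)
We mimic the proof of Lemma~\ref{lem:P2-c1-0-negative}. First we check that $\cF$ has a nonzero section. For $c_1=-1$ the bundle $\cF$ is normalized, so by Proposition~\ref{prop:stable-section}~(i) it suffices to show that $\cF$ is not $H_{\P^2}$-stable. In both cases $c_2\le 0<\frac14=\frac{r-1}{2r}c_1^2$, so Proposition~\ref{prop:bogo} shows that $\cF$ is not even semistable. Hence $H^0(\P^2,\cF)\neq0$. Let $k\ge0$ be maximal with $H^0(\P^2,\cF(-k))\ne0$; such a $k$ exists since $\cF(-k)$ has no sections for $k\gg0$. Take $0\ne s\in H^0(\P^2,\cF(-k))$.

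As before, maximality of $k$ shows that $Z=Z(s)$ has no divisorial part, so $Z$ is empty or zero-dimensional. Since $\det\cF(-k)=\cO(-1-2k)$, Lemma~\ref{lem:koszul-section} gives, after twisting by $\cO(k)$,
$$
0\to\cO_{\P^2}(k)\to\cF\to\cI_Z(-1-k)\to0,
\qquad c_2=-k(k+1)+\deg Z.
$$
Now restrict to a line $\ell$ with $\ell\cap Z=\emptyset$. The restricted sequence splits because $H^1(\ell,\cO_\ell(2k+1))=0$, so $\cF|_\ell\simeq\cO_\ell(k)\oplus\cO_\ell(-1-k)$. Nefness of $\cF(5/2)$ forces $-1-k+\frac52\ge0$, that is, $k\le1$.

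\emph{Case $c_2=0$.} Here $\deg Z=k(k+1)$.
\begin{itemize}
\item If $k=1$, then $\deg Z=2$ and $Z\ne\emptyset$. Twisting the sequence by $5/2$ gives a surjection of the nef $\mathbb Q$-twisted bundle $\cF(5/2)$ onto $\cI_Z(1/2)$. We cannot cite Lemma~\ref{lem:IZ-nef} verbatim because of the twist. Instead we rerun its proof on a curve $C$ meeting $Z$: a quotient line bundle of $\cF|_C$ has degree at most $(-2+\frac12)\deg C-\deg(Z\cap C)$ after the twist. Simpler still, take $C=\ell'$ a line through a point of $Z$. Then $\cF|_{\ell'}$ surjects onto $\cO_{\ell'}(-2-m)$ with $m\ge1$, and $-2-m+\frac52\ge0$ fails. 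This is the contradiction.
\item Hence $k=0$ and $Z=\emptyset$. The extension $0\to\cO\to\cF\to\cO(-1)\to0$ splits since $H^1(\P^2,\cO(1))=0$, giving $\cF\simeq\cO_{\P^2}\oplus\cO_{\P^2}(-1)$.
\end{itemize}

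\emph{Case $c_2=-2$.} Here $\deg Z=k(k+1)-2\ge0$, which forces $k\ge1$. Combined with $k\le1$ this gives $k=1$ and $Z=\emptyset$. The sequence $0\to\cO(1)\to\cF\to\cO(-2)\to0$ splits because $H^1(\P^2,\cO(3))=0$, so $\cF\simeq\cO_{\P^2}(1)\oplus\cO_{\P^2}(-2)$.

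The only delicate point is the $\mathbb Q$-twist in the case $k=1$, $c_2=0$. It is resolved by restricting to a line $\ell'$ meeting $Z$ in a nonempty zero-dimensional subscheme of length $m\ge1$. The restricted quotient is $\cI_{Z\cap\ell'}(-2)=\cO_{\ell'}(-2-m)$, and nefness of $\cF(5/2)$ on $\ell'$ requires every quotient line bundle of $\cF|_{\ell'}$ to have degree at least $-\frac52$. Since $-2-m\le-3<-\frac52$, this is impossible.
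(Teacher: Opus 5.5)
Your proof is correct and follows the paper's argument essentially step for step. Both use non-semistability via Bogomolov to get a section, then the maximal twist $k$ with the Koszul sequence and a line avoiding $Z$ to force $k\le1$. In the case $c_2=0$, $k=1$ the paper likewise restricts to a line $\ell$ meeting $Z$, obtaining a quotient of degree $-2-\deg(Z\cap\ell)<-\frac52$. The only blemish is your discarded aside about a general curve $C$: after the twist the quotient has degree $\frac12\deg C-\deg(Z\cap C)$, not $(-2+\frac12)\deg C-\deg(Z\cap C)$, and this need not be negative for curves of higher degree. Your switch to a line is exactly what makes the argument work.
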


\begin{proof}
Since $c_1(\mathcal F)^2>4c_2(\mathcal F)$ in both cases, $\mathcal F$ is not $H_{\P^2}$-semistable by Proposition~\ref{prop:bogo}. In particular, $\mathcal F$ is not $H_{\P^2}$-stable. By Proposition~\ref{prop:stable-section}~(i),
$$
H^0(\mathbb P^2,\mathcal F)\neq0.
$$
Hence there exists an integer $k\geq0$ such that
$$
H^0(\mathbb P^2,\mathcal F(-k))\neq0.
$$
Let $k$ be maximal with this property, and choose a nonzero section
$$
0\neq s\in H^0(\mathbb P^2,\mathcal F(-k)).
$$
Let $Z:=Z(s)$ be its scheme theoretic zero locus. The section $s$ induces a nonzero morphism
$$
\varphi\colon\mathcal O_{\mathbb P^2}\longrightarrow\mathcal F(-k).
$$
Since $\varphi$ is nonzero, $\ker(\varphi)$ has rank zero and is therefore a torsion subsheaf of $\mathcal O_{\mathbb P^2}$. Since $\mathcal O_{\mathbb P^2}$ is torsion-free, we obtain $\ker(\varphi)=0$. Thus $\varphi$ is injective.

We claim that $Z$ has no one-dimensional component. Indeed, if $s$ vanished along a nonzero effective divisor $D$ of degree $m>0$, then $s$ would determine a nonzero section of $\mathcal F(-k-m)$, and hence
$$
H^0(\mathbb P^2,\mathcal F(-k-m))\neq0,
$$
contrary to the maximality of $k$.

Consequently, $Z$ is either empty or a zero-dimensional subscheme. Lemma~\ref{lem:koszul-section} gives an exact sequence
$$
0\longrightarrow\mathcal O_{\mathbb P^2}\xrightarrow{s}\mathcal F(-k)\longrightarrow\mathcal I_Z\otimes\det(\mathcal F(-k))\longrightarrow0.
$$
Since $c_1=-1$, one has
$$
\det(\mathcal F(-k))\simeq\mathcal O_{\mathbb P^2}(-1-2k).
$$
After tensoring by $\mathcal O_{\mathbb P^2}(k)$, we obtain
$$
0\longrightarrow\mathcal O_{\mathbb P^2}(k)\longrightarrow\mathcal F\longrightarrow\mathcal I_Z(-1-k)\longrightarrow0.
$$
Computing the second Chern class, we get
$$
c_2=-k(k+1)+\deg Z.
$$

Since $Z$ is either empty or zero-dimensional, we can choose a line $\ell\subset\mathbb P^2$ such that $\ell\cap Z=\emptyset$. Restricting the exact sequence to $\ell$, we get
$$
0\to \mathcal O_\ell(k)\to \cF|_\ell\to\mathcal O_\ell(-1-k)\to0.
$$
Moreover, $\operatorname{Ext}^1_\ell(\mathcal O_\ell(-1-k),\mathcal O_\ell(k))=H^1(\ell,\mathcal O_\ell(2k+1))=0$. Thus $\cF|_\ell\simeq\mathcal O_\ell(k)\oplus \mathcal O_\ell(-1-k)$. Since $\cF(5/2)$ is nef, we have $\frac32-k\ge0$. Since $k$ is an integer, we obtain $k\le1$. 

If $c_2(\cF)=0$, then $0=-k(k+1)+\deg Z$, so $\deg Z=k(k+1)$. If $k=1$, then $\deg Z=2$. Choose a line $\ell\subset\mathbb P^2$ such that $\ell\cap Z\neq\emptyset$. Restricting the quotient $\mathcal F\twoheadrightarrow\mathcal I_Z(-2)$ to $\ell$, and arguing as in the proof of Lemma~\ref{lem:IZ-nef}, we obtain a quotient line bundle $\mathcal F|_\ell\twoheadrightarrow\mathcal I_{Z\cap\ell/\ell}(-2)$. 

By the nefness of the formal $\mathbb Q$-twist $\mathcal F(5/2)$, we must have $\deg\bigl(\mathcal I_{Z\cap\ell/\ell}(-2)\bigr)+\frac52\geq0$. On the other hand,
$$
\begin{aligned}
\deg\bigl(\mathcal I_{Z\cap\ell/\ell}(-2)\bigr)+\frac52
&=-\deg(Z\cap\ell)-2+\frac52\\
&=\frac12-\deg(Z\cap\ell)<0,
\end{aligned}
$$
which is a contradiction. Hence $k=0$. Then $\deg Z=0$, so $Z=\emptyset$. Thus we obtain
$$
0\to \mathcal O_{\mathbb P^2}\to \cF\to\mathcal O_{\mathbb P^2}(-1)\to0.
$$
Since $\operatorname{Ext}^1(\mathcal O_{\mathbb P^2}(-1),\mathcal O_{\mathbb P^2})=H^1(\mathbb P^2,\mathcal O_{\mathbb P^2}(1))=0$, the sequence splits. Hence $\cF\simeq \mathcal O_{\mathbb P^2}\oplus\mathcal O_{\mathbb P^2}(-1)$.

If $c_2=-2$, then $-2=-k(k+1)+\deg Z$, so $\deg Z=k(k+1)-2$. Since $\deg Z\ge0$ and $k\le1$, we must have $k=1$ and $\deg Z=0$. Thus $Z=\emptyset$ and 
$$
0\to \mathcal O_{\mathbb P^2}(1)\to \cF\to\mathcal O_{\mathbb P^2}(-2)\to0.
$$
Since $\operatorname{Ext}^1(\mathcal O_{\mathbb P^2}(-2),\mathcal O_{\mathbb P^2}(1))=H^1(\mathbb P^2,\mathcal O_{\mathbb P^2}(3))=0$, the sequence splits. Therefore $\cF\simeq \mathcal O_{\mathbb P^2}(1)\oplus \mathcal O_{\mathbb P^2}(-2)$. 
\end{proof}

\section{The case $c_1=0$}
\begin{setting}\label{setting:c1-zero}
Throughout this section, let $\cE$ be a rank $2$ weak Fano bundle on $Q^4$ such that $c_1(\cE)=0$ and $c_2(\cE)=a\alpha+b\beta$. Moreover, we set
$$
D_0:=\xi+2H
$$
on $\mathbb P(\cE)$. By Lemma~\ref{lem:D-nef-big}, $D_0$ is nef and big.
\end{setting}

\subsection{Calculations of intersection numbers}

\begin{lemma}\label{lem:intersections-c1-0}
If $c_1=0$, then
\begin{itemize}
 \item[(i)] $\xi D_0^4=a^2+b^2-24(a+b)+32$,
 \item[(ii)] $HD_0^4=64-8(a+b)$.
\end{itemize}
In particular, $(\xi-2H)D_0^4=a^2+b^2-8(a+b)-96$.
\end{lemma}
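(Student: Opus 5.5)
The plan is a direct computation on $\mathbb P(\cE)$, using the Grothendieck relation for the tautological class and the push-forward formula for Segre classes fixed in the Notations. Since $\cE$ has rank $2$ and $c_1=0$, we have
$$
\xi^2-c_1(\cE)\,\xi+c_2(\cE)=0, \qquad\text{hence}\qquad \xi^2=-\pi^*c_2(\cE).
$$
Equivalently, I will use $s_i(\cE)=\pi_*(\xi^{1+i})$ together with $\pi_*(\xi^0)=0$. With $c_1=0$ the Segre classes are
$$
s_0=1,\quad s_1=0,\quad s_2=-c_2(\cE),\quad s_3=0,\quad s_4=c_2(\cE)^2 .
$$
By the projection formula, $\xi^{1+i}H^{4-i}=\deg\bigl(s_i(\cE)\cdot H_{Q^4}^{4-i}\bigr)$ and $H^5=0$.

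Using Remark~\ref{rem:chow-ring-Q4} ($H_{Q^4}^2=\alpha+\beta$, $\alpha^2=\beta^2=1$, $\alpha\beta=0$, $H_{Q^4}^4=2$), this gives
$$
\xi^5=a^2+b^2,\quad \xi^4H=0,\quad \xi^3H^2=-(a+b),\quad \xi^2H^3=0,\quad \xi H^4=2,\quad H^5=0 .
$$

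Next I expand $D_0^4=(\xi+2H)^4$ binomially, with coefficients $1,8,24,32,16$. Multiplying by $\xi$ gives
$$
\xi D_0^4=\xi^5+8\xi^4H+24\xi^3H^2+32\xi^2H^3+16\xi H^4=a^2+b^2-24(a+b)+32,
$$
which is (i). Multiplying by $H$ gives
$$
HD_0^4=\xi^4H+8\xi^3H^2+24\xi^2H^3+32\xi H^4+16H^5=-8(a+b)+64,
$$
which is (ii).

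The final formula follows by linearity:
$$
(\xi-2H)D_0^4=\xi D_0^4-2HD_0^4=a^2+b^2-8(a+b)-96 .
$$

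There is no real obstacle here. The only points needing care are the sign convention in the Grothendieck relation (Grothendieck's $\mathbb P(\cE)$, so $\xi^2=c_1\xi H-c_2$) and evaluating $c_2^2$ and $c_2H^2$ correctly in $A(Q^4)$. As a cross-check, summing the two results gives $D_0^5=\xi D_0^4+2HD_0^4=a^2+b^2-40(a+b)+160$, which agrees with the computation in Proposition~\ref{prop:numerical-c1-0}.
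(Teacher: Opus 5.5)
Your proof is correct and follows the same route as the paper: compute the Segre classes $s_0=1$, $s_1=s_3=0$, $s_2=-c_2(\cE)$, $s_4=c_2(\cE)^2$, expand $(\xi+2H)^4$ binomially, and push forward via the projection formula using $H_{Q^4}^2=\alpha+\beta$, $\alpha^2=\beta^2=1$, $\alpha\beta=0$. Beyond the paper, you also write out (ii) explicitly rather than saying it is ``obtained in the same way,'' and you cross-check the result against $D_0^5$ from Proposition~\ref{prop:numerical-c1-0}.
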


\begin{proof}
Recall that for a rank \(2\) vector bundle \(\mathcal E\), the Segre classes are defined by $s_i(\mathcal E)=\pi_*(\xi^{1+i})$. Since \(c_1=0\), we have
\[
\sum_{i\ge0}s_i(\mathcal E)t^i=\frac{1}{1+c_2(\mathcal E)t^2}.
\]
Thus we get $s_0(\mathcal E)=1$, $s_1(\mathcal E)=0$, $s_2(\mathcal E)=-c_2(\mathcal E)$, $s_3(\mathcal E)=0$, and $s_4(\mathcal E)=c_2(\mathcal E)^2$.

Since $D_0=\xi+2H$, we have $\xi D_0^4=\xi(\xi+2H)^4$. Expanding the fourth power gives
$$
(\xi+2H)^4=\xi^4+8H\xi^3+24H^2\xi^2+32H^3\xi+16H^4.
$$
Hence
$$
\xi D_0^4=\xi^5+8H\xi^4+24H^2\xi^3+32H^3\xi^2+16H^4\xi.
$$
Using the projection formula and the above identities for $\pi_*(\xi^i)$, we obtain
$$
\xi D_0^4=c_2(\mathcal E)^2+8H_{Q^4}\cdot 0+24H_{Q^4}^2(-c_2(\mathcal E))+32H_{Q^4}^3\cdot 0+16H_{Q^4}^4.
$$
Therefore
$$
\xi D_0^4=c_2(\mathcal E)^2-24H_{Q^4}^2c_2(\mathcal E)+16H_{Q^4}^4.
$$

Since $c_2(\mathcal E)=a\alpha+b\beta$, $\alpha^2=\beta^2=1$ and $\alpha\beta=0$, we have
$$
c_2(\mathcal E)^2=(a\alpha+b\beta)^2=a^2\alpha^2+2ab\alpha\beta+b^2\beta^2=a^2+b^2.
$$
Moreover, since $H_{Q^4}^2=\alpha+\beta$, we get
$$
H_{Q^4}^2c_2(\mathcal E)=(\alpha+\beta)(a\alpha+b\beta)=a\alpha^2+a\beta\alpha+b\alpha\beta+b\beta^2=a+b.
$$
Substituting these into the previous formula gives
$$
\xi D_0^4=a^2+b^2-24(a+b)+32.
$$
The other formulas are obtained in the same way.
\end{proof}

\subsection{Applications of the splitting criterion}

\begin{proposition}\label{prop:c1-0-negative}
Let $\mathcal E$ be a rank $2$ weak Fano bundle on $Q^4$ with $c_1=0$ and $c_2=(a,b)$. Then the following hold.
\begin{enumerate}
\item If $(a,b)=(-4,-4)$, then $\mathcal E\simeq \mathcal O_{Q^4}(2)\oplus \mathcal O_{Q^4}(-2)$.
\item If $(a,b)=(-1,-1)$, then $\mathcal E\simeq \mathcal O_{Q^4}(1)\oplus \mathcal O_{Q^4}(-1)$.
\item The cases $(a,b)=(-4,0)$, $(0,-4)$, $(-3,-3)$, $(-4,8)$, $(8,-4)$, $(-3,5)$ and $(5,-3)$ do not occur.
\end{enumerate}
\end{proposition}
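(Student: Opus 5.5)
The strategy is to apply the splitting criterion Proposition~\ref{prop:APW-splitting} to a suitable twist of $\cE$. In every listed case at least one of $a,b$ is negative, or $a=b$ with $a<0$. Since $c_1=0$ and $(a+b)\cdot$ is negative in several cases, Proposition~\ref{prop:bogo} (with $n=4$, $r=2$) shows that $\cE$ is not $H_{Q^4}$-semistable whenever $a+b<0$. The cases with $a+b<0$ are $(-4,-4),(-1,-1),(-4,0),(0,-4),(-3,-3)$. For these, Proposition~\ref{prop:stable-section}~(ii) gives $H^0(\cE(-1))\neq0$. Let $k\ge1$ be maximal with $H^0(\cE(-k))\ne0$; it exists since $\cE$ is a vector bundle. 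Then $\cE(-k)$ satisfies the hypotheses of Proposition~\ref{prop:APW-splitting}, because $c_2(\cE(-k))=(a+k^2,b+k^2)$. Rather than verifying signs, we argue directly. As in the proof of Lemma~\ref{lem:P2-c1-0-negative}, maximality of $k$ ensures the zero locus $Z$ of a section $s\in H^0(\cE(-k))$ is empty or of pure codimension $2$. Lemma~\ref{lem:koszul-section} then yields
\[
0\to\cO_{Q^4}(k)\to\cE\to\cI_Z(-k)\to0 .
\]
Restricting to a general line $\ell$ disjoint from $Z$ gives $\cE|_\ell\simeq\cO_\ell(k)\oplus\cO_\ell(-k)$. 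Nefness of $\cE(2)$ forces $k\le2$. Computing $c_2$, we get $[Z]=c_2+k^2(\alpha+\beta)$, which must be an effective (or zero) class.

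If $k=2$, twisting by $2$ gives a surjection $\cE(2)\twoheadrightarrow\cI_Z$, so $Z=\emptyset$ by Lemma~\ref{lem:IZ-nef}. Hence $c_2=(-4,-4)$, and the extension splits since $H^1(\cO_{Q^4}(4))=0$. This gives (i). If $k=1$, then $[Z]=(a+1,b+1)$, and effectiveness requires $a,b\ge-1$. Among the five cases this leaves only $(-1,-1)$, where $Z=\emptyset$ and the extension splits as $H^1(\cO(2))=0$, giving (ii). For $(-4,0),(0,-4),(-3,-3)$ this leaves $k=2$ with $[Z]=(0,4),(4,0),(1,1)\neq0$, contradicting Lemma~\ref{lem:IZ-nef}. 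For $(-4,-4)$ with $k=1$ the class $(-3,-3)$ is not effective, so $k=2$, consistent with (i).

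The remaining cases $(-4,8),(8,-4),(-3,5),(5,-3)$ have $a+b\ge 2$, so Bogomolov gives nothing, and this is the main obstacle. Here I plan to restrict to planes. By symmetry take $a<0$ and let $\Pi$ be a plane with $[\Pi]=\alpha$. Then $c_2(\cE|_\Pi)=c_2\cdot\alpha=a$, $c_1(\cE|_\Pi)=0$, and $\cE|_\Pi(2)$ is nef. For $a=-3$, Lemma~\ref{lem:P2-c1-0-negative}~(ii) gives an immediate contradiction. For $a=-4$ we need an analogue of that lemma. Running the same argument on $\P^2$ gives $\deg Z=k^2-4$ with $k\le2$, so $k=2$ and $Z=\emptyset$, whence $\cE|_\Pi\simeq\cO(2)\oplus\cO(-2)$ for every $\alpha$-plane. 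By Lemma~\ref{Hari} every line lies in an $\alpha$-plane, so $\cE$ is linear-uniform of type $4$ after twisting. Proposition~\ref{prop:Fritzsche}~(ii) then makes $\cE$ split, so $\cE\simeq\cO(2)\oplus\cO(-2)$, which has $c_2=(-4,-4)$, a contradiction. This proves (iii).
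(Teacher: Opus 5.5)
Your proof is correct, but it takes a different route from the paper's.

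\textbf{The paper's route.} It treats all nine cases uniformly:
\begin{enumerate}
\item Le Potier vanishing (Theorem~\ref{thm:le}) for the ample bundle $\cE(4)$, together with $\chi(Q^4,\cE)>0$ from Lemma~\ref{lem:RR-c1-0}, gives $H^0(Q^4,\cE)\neq0$.
\item The numbers $(\xi-2H)D_0^4$ and $(\xi-3H)D_0^4$, computed against the nef divisor $D_0=\xi+2H$, locate the largest twist of $\cE$ that has sections.
\item The splitting criterion Proposition~\ref{prop:APW-splitting} is then applied to $\cE$, $\cE(-1)$ or $\cE(-2)$, and split bundles are recognized by $c_2=(-m^2,-m^2)$.
\end{enumerate}

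\textbf{Your route for $a+b<0$.} You separate the cases by the sign of $a+b$. When $a+b<0$, Bogomolov's inequality replaces Le Potier vanishing and Riemann--Roch in producing sections. A direct Serre-correspondence argument then replaces both the intersection theory on $\P(\cE)$ and the APW criterion: the Koszul sequence for the maximal twist, a line missing $Z$ to get $k\le2$, the class $[Z]=(a+k^2,b+k^2)$, and Lemma~\ref{lem:IZ-nef} when $k=2$. This part uses only nefness of $\cE(2)$ and is essentially self-contained.

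\textbf{Your route for $a+b>0$.} For $(-4,8),(8,-4),(-3,5),(5,-3)$ you restrict to planes, which is what the paper does for $(-1,3)$ and $(-3,9)$, not here. This costs you a $c_2=-4$ analogue of Lemma~\ref{lem:P2-c1-0-negative} and an appeal to Fritzsche's classification (Proposition~\ref{prop:Fritzsche}). In the paper these four cases need no extra input: they fall to $\chi(Q^4,\cE)=1$ plus the same APW argument.

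\textbf{Points to make explicit.}
\begin{itemize}
\item You use that an effective class $p\alpha+q\beta$ in $A^2(Q^4)$ has $p,q\ge0$. This holds because $p=[Z]\cdot\alpha$ and $q=[Z]\cdot\beta$, and planes of each family move homogeneously, so a general one meets $Z$ in finitely many points.
\item You use that $[Z]=0$ forces $Z=\emptyset$. This holds since $p+q=[Z]\cdot H_{Q^4}^2>0$ whenever $Z$ is nonempty.
\item Your aside that $\cE(-k)$ ``satisfies the hypotheses'' of Proposition~\ref{prop:APW-splitting} is not justified a priori. For example, $(-4,0)$ with $k=2$ would give $c_2(\cE(-2))=(0,4)$. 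You discard this aside immediately, so it does no harm, but it should be deleted.
\end{itemize}
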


\begin{proof}
By Lemma~\ref{lem:D-nef-big}, $\mathcal E(3)$ is ample, and hence so is $\mathcal E(4)$. Since $K_{Q^4}\simeq\mathcal O_{Q^4}(-4)$, Theorem~\ref{thm:le} gives
$$
H^i(Q^4,\mathcal E)=H^i\bigl(Q^4,\mathcal E(4)\otimes K_{Q^4}\bigr)=0
$$
for $i\geq2$. For each case in the statement, Lemma~\ref{lem:RR-c1-0} gives $\chi(Q^4, \mathcal E)>0$. Therefore
$$
\chi(Q^4, \mathcal E)=\dim H^0(Q^4,\mathcal E)-\dim H^1(Q^4,\mathcal E)>0,
$$
and hence $H^0(Q^4,\mathcal E)\neq0$. 

First, suppose that $H^0(Q^4,\mathcal E(-2))\neq0$. Then $\xi-2H$ is effective. Since $D_0=\xi+2H$ is nef, we must have $(\xi-2H)D_0^4\geq0$. On the other hand, Lemma~\ref{lem:intersections-c1-0} gives
$$
(\xi-2H)D_0^4=a^2+b^2-8(a+b)-96.
$$
For the numerical cases in the statement, this number is
$$
\begin{cases}
  0
  &\text{if }(a,b)=(-4,-4),\\
  -48
  &\text{if }(a,b)=(-4,0),(0,-4),(-4,8),(8,-4),\\
  -30
  &\text{if }(a,b)=(-3,-3),\\
  -78
  &\text{if }(a,b)=(-3,5),(5,-3),(-1,-1).
\end{cases}
$$
Thus $(a,b)=(-4,-4)$. In this case, Lemma~\ref{lem:intersections-c1-0} also gives
$$
HD_0^4=64-8(a+b)=128.
$$
Consequently,
$$
(\xi-3H)D_0^4=(\xi-2H)D_0^4-HD_0^4=-128<0.
$$
It follows that $H^0(Q^4,\mathcal E(-3))=0$. Set $\cF:=\mathcal E(-2)$. Then $H^0(Q^4,\cF)\neq0$ and $H^0(Q^4,\cF(-1))=0$. Moreover, since $c_1=0$, we have $c_2(\cF)=c_2(\mathcal E)+4H_{Q^4}^2=0$. Therefore, Proposition~\ref{prop:APW-splitting} implies that $\cF$ is a split bundle. Since $c_1(\cF)=-4H_{Q^4}$ and $c_2(\cF)=0$, we obtain $\cF\simeq\mathcal O_{Q^4}\oplus\mathcal O_{Q^4}(-4)$. Hence $\mathcal E\simeq\mathcal O_{Q^4}(2)\oplus\mathcal O_{Q^4}(-2)$.

It remains to consider the case
$$
H^0(Q^4,\mathcal E(-2))=0.
$$
First, suppose that $H^0(Q^4,\mathcal E(-1))\neq0$, and set $\cF:=\mathcal E(-1)$. Then
$$
H^0(Q^4,\cF)\neq0,\qquad H^0(Q^4,\cF(-1))=H^0(Q^4,\mathcal E(-2))=0.
$$
Furthermore,
$$
c_2(\cF)=c_2(\mathcal E)+H_{Q^4}^2=(a+1)\alpha+(b+1)\beta.
$$
For each numerical case in the statement, one has
$$
  a+1<0,
  \qquad\text{or}\qquad
  b+1<0,
  \qquad\text{or}\qquad
  a+1=b+1=0.
$$
Thus, Proposition~\ref{prop:APW-splitting} implies that $\cF$ is a split bundle. Hence $\cE$ is a split bundle.

Finally, suppose that
$$
H^0(Q^4,\mathcal E(-1))=0.
$$
Since $H^0(Q^4,\mathcal E)\neq0$, and since at least one of $a$ and $b$ is negative in every case, Proposition~\ref{prop:APW-splitting} applies directly to $\mathcal E$. Hence $\mathcal E$ is a split bundle.

A rank $2$ split bundle with $c_1=0$ is of the form
$$
\mathcal E\simeq\mathcal O_{Q^4}(m)\oplus\mathcal O_{Q^4}(-m)
$$
for some $m\in\mathbb Z$. Therefore
$$
c_2(\mathcal E)=-m^2H_{Q^4}^2=-m^2\alpha-m^2\beta.
$$
Among the cases in the statement, this is possible only for
$$
  (a,b)=(-4,-4)
  \quad\text{or}\quad
  (a,b)=(-1,-1),
$$
corresponding respectively to $m=2$ and $m=1$. This proves all the assertions.
\end{proof}

\begin{proposition}\label{prop:c1-0-zero}
Let $\mathcal E$ be a rank $2$ weak Fano bundle on $Q^4$ with $c_1=0$ and $c_2=(0,0)$. Then $\mathcal E\simeq \mathcal O_{Q^4}\oplus \mathcal O_{Q^4}$.
\end{proposition}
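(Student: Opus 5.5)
Following the pattern of Proposition~\ref{prop:c1-0-negative}, I will first produce a section and then reduce to the splitting criterion Proposition~\ref{prop:APW-splitting}, whose hypothesis $a=b=0$ is satisfied here. By Lemma~\ref{lem:D-nef-big}, $\mathcal E(3)$, hence $\mathcal E(4)$, is ample, so Theorem~\ref{thm:le} gives $H^i(Q^4,\mathcal E)=H^i(Q^4,\mathcal E(4)\otimes K_{Q^4})=0$ for $i\ge 2$. Lemma~\ref{lem:RR-c1-0} with $(a,b)=(0,0)$ gives $\chi(Q^4,\mathcal E)=24/12=2>0$, so $H^0(Q^4,\mathcal E)\neq 0$.

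Next I will bound the twists admitting sections. Lemma~\ref{lem:intersections-c1-0} gives $(\xi-2H)D_0^4=-96<0$, so $H^0(Q^4,\mathcal E(-2))=0$ because $D_0$ is nef. If $H^0(Q^4,\mathcal E(-1))\neq0$, set $\mathcal F:=\mathcal E(-1)$. Then $H^0(Q^4,\mathcal F)\neq0$ and $H^0(Q^4,\mathcal F(-1))=0$. Moreover $c_2(\mathcal F)=c_2(\mathcal E)+H_{Q^4}^2=(1,1)$, so Proposition~\ref{prop:APW-splitting} does not apply directly. In this case I will instead use the intersection number: $(\xi-H)D_0^4=\xi D_0^4-HD_0^4=32-64=-32<0$. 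This contradicts the effectivity of $\xi-H$ together with the nefness of $D_0$. Hence $H^0(Q^4,\mathcal E(-1))=0$.

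We now have $H^0(Q^4,\mathcal E)\neq0$, $H^0(Q^4,\mathcal E(-1))=0$ and $a=b=0$, so Proposition~\ref{prop:APW-splitting} shows that $\mathcal E$ splits. A split bundle with $c_1=0$ has the form $\mathcal O_{Q^4}(m)\oplus\mathcal O_{Q^4}(-m)$ and satisfies $c_2=-m^2(\alpha+\beta)$. Since $c_2=0$, we get $m=0$, and therefore $\mathcal E\simeq\mathcal O_{Q^4}\oplus\mathcal O_{Q^4}$.

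The only step requiring care is the exclusion of sections of $\mathcal E(-1)$. This reduces to computing $HD_0^4=64$ and $\xi D_0^4=32$ from Lemma~\ref{lem:intersections-c1-0}, which is routine. I therefore expect no serious obstacle.
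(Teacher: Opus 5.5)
Your proof is correct and follows the paper's argument. Le Potier vanishing together with $\chi(Q^4,\mathcal E)=2$ gives a section, the intersection number $(\xi-H)D_0^4=-32<0$ rules out sections of $\mathcal E(-1)$, and Proposition~\ref{prop:APW-splitting} with $a=b=0$ finishes. Your preliminary check that $H^0(Q^4,\mathcal E(-2))=0$ and your detour through $\mathcal F=\mathcal E(-1)$ with $c_2(\mathcal F)=(1,1)$ are harmless but unnecessary, since the single computation $(\xi-H)D_0^4<0$ already settles that step.
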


\begin{proof}
By Lemma~\ref{lem:RR-c1-0}, we have $\chi(Q^4, \cE)=2$. Since $\cE(3)$ is ample, so is $\cE(4)$. Moreover, $K_{Q^4}\simeq\cO_{Q^4}(-4)$. Hence Theorem~\ref{thm:le} gives
$$
H^i(Q^4,\cE)=H^i\bigl(Q^4,\cE(4)\otimes K_{Q^4}\bigr)=0
$$
for $i\ge 2$. Thus $2=\chi(Q^4, \cE)=\dim H^0(Q^4,\cE)-\dim H^1(Q^4,\cE)$, and hence $H^0(Q^4,\cE)\ne0$. We next claim that $H^0(Q^4,\cE(-1))=0$. Indeed, otherwise $\xi-H$ would be effective on $\mathbb P(\cE)$. Since $D_0=\xi+2H$ is nef, we would have $(\xi-H)D_0^4\geq0$. On the other hand, Lemma~\ref{lem:intersections-c1-0} gives
$$
(\xi-H)D_0^4=a^2+b^2-16(a+b)-32=-32,
$$
because $(a,b)=(0,0)$. This is a contradiction.

Thus $H^0(Q^4,\cE)\ne0$ and $H^0(Q^4,\cE(-1))=0$. Since $c_2=(0,0)$, $\cE$ is a split bundle by Proposition~\ref{prop:APW-splitting}. Finally, a split bundle with $c_1=0$ is of the form
$$
\cE\simeq\cO_{Q^4}(m)\oplus\cO_{Q^4}(-m)
$$
for some $m\in\mathbb Z$. Since $c_2=(0,0)$, we have $m=0$. Therefore $\cE\simeq\cO_{Q^4}\oplus\cO_{Q^4}$.
\end{proof}

\subsection{Applications of Lemma~\ref{lem:P2-c1-0-negative}}

\begin{proposition}\label{prop:c1-0-minus-one-three}
There is no rank $2$ weak Fano bundle $\cE$ on $Q^4$ with $c_1=0$ and $c_2=(-1,3)$ or $(3,-1)$.
\end{proposition}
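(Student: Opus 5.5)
The plan is to restrict $\cE$ to planes of the family on which the second Chern class becomes negative, and then use Lemma~\ref{lem:P2-c1-0-negative} together with Fritzsche's classification. By symmetry between the two families of planes (interchanging $\alpha$ and $\beta$), it suffices to treat $c_2=(-1,3)$.

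First I record the restriction data. Let $\Pi\subset Q^4$ be an $\alpha$-plane and set $\cF:=\cE|_\Pi$. Then $c_1(\cF)=0$. Using Remark~\ref{rem:chow-ring-Q4}, the degree of $c_2(\cF)$ is $c_2(\cE)\cdot\alpha=a\alpha^2+b\alpha\beta=a=-1$. By Lemma~\ref{lem:D-nef-big}, $D_0=\xi+2H$ is nef, so $\cE(2)$ is nef. Nefness is preserved under restriction, so $\cF(2)=\cE(2)|_\Pi$ is nef. Lemma~\ref{lem:P2-c1-0-negative}~(i) then gives
$$
\cE|_\Pi\simeq\cO_{\P^2}(1)\oplus\cO_{\P^2}(-1)
$$
for every $\alpha$-plane $\Pi$.

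Next I pass from planes to lines. By Lemma~\ref{Hari}, every line $\ell\subset Q^4$ lies in some $\alpha$-plane $\Pi$. Since a line of $Q^4$ inside $\Pi\simeq\P^2$ is a line of $\P^2$, restricting the previous splitting gives
$$
\cE|_\ell\simeq\cO_\ell(1)\oplus\cO_\ell(-1).
$$
Hence $\cE(-1)|_\ell\simeq\cO_\ell\oplus\cO_\ell(-2)$ for every line $\ell$. This says exactly that $\cE(-1)$ is linear-uniform of type $2$ in the sense of Definition~\ref{lin}.

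Finally, Proposition~\ref{prop:Fritzsche}~(2) shows that $\cE(-1)$ is split, so $\cE$ is split. A split rank $2$ bundle with $c_1=0$ has the form $\cO_{Q^4}(m)\oplus\cO_{Q^4}(-m)$. Its second Chern class is $-m^2\alpha-m^2\beta$, so $a=b$. This contradicts $(a,b)=(-1,3)$. The case $(3,-1)$ is identical, using $\beta$-planes, on which the degree of $c_2$ is $b=-1$.

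There is no serious obstacle. The only points needing care are that the restriction to \emph{every} $\alpha$-plane is controlled, not just a general one, and that Lemma~\ref{Hari} covers \emph{every} line. Both hold here, since Lemma~\ref{lem:P2-c1-0-negative} applies to each plane individually.
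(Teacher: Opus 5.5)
Your proposal is correct and follows the paper's proof essentially step for step. You restrict to $\alpha$-planes and apply Lemma~\ref{lem:P2-c1-0-negative}~(i) to get $\cO_\Pi(1)\oplus\cO_\Pi(-1)$ on every $\alpha$-plane, use Lemma~\ref{Hari} to see that $\cE(-1)$ is linear-uniform of type $2$, and conclude with Proposition~\ref{prop:Fritzsche}~(ii). Your explicit remark that a split bundle with $c_1=0$ has $a=b$ simply spells out the final contradiction that the paper leaves implicit.
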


\begin{proof}
We treat the case $c_2=(-1,3)$. The other case is symmetric. Let $\Pi\simeq \P^2$ be an $\alpha$-plane. Then
$$
 c_1(\cE|_{\Pi})=0,
 \qquad
 c_2(\cE|_{\Pi})=-H_{\Pi}^2.
$$
Moreover, $\cE(2)$ is nef. Hence $(\cE|_{\Pi})(2)$ is nef. By Lemma~\ref{lem:P2-c1-0-negative}~(i),
$$
 \cE|_{\Pi}\simeq \cO_{\Pi}(1)\oplus \cO_{\Pi}(-1)
$$
for every $\alpha$-plane $\Pi$. By Lemma~\ref{Hari}, every line \(\ell\subset Q^4\) is contained in a unique \(\alpha\)-plane. Hence
$$
\cE|_\ell\simeq \mathcal O_\ell(1)\oplus \mathcal O_\ell(-1).
$$
Equivalently,
$$
\cE(-1)|_\ell\simeq \mathcal O_\ell\oplus \mathcal O_\ell(-2).
$$
Thus $\cE(-1)$ is linear-uniform of type $2$ in the sense of Definition~\ref{lin}. By Proposition~\ref{prop:Fritzsche}~(ii), $\cE(-1)$ is a split bundle. Hence $\cE$ is a split bundle, contradicting $c_2=(-1,3)$.
\end{proof}

\begin{proposition}\label{prop:c1-0-minus-3-9}
There is no rank $2$ weak Fano bundle $\cE$ on $Q^4$ with $c_1=0$ and $c_2=(-3,9)$ or $(9,-3)$.
\end{proposition}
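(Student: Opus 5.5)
We argue exactly as in Proposition~\ref{prop:c1-0-minus-one-three}, now using part~(ii) of Lemma~\ref{lem:P2-c1-0-negative} in place of part~(i). By symmetry between the two families of planes it is enough to treat $c_2=(-3,9)$. Suppose such an $\cE$ exists, and let $\Pi\simeq\P^2$ be an $\alpha$-plane on $Q^4$.

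First we compute the Chern classes of $\cE|_\Pi$. Since $[\Pi]=\alpha$ and $\deg(\alpha^2)=1$, $\deg(\alpha\beta)=0$, we get
$$
c_1(\cE|_\Pi)=0,\qquad c_2(\cE|_\Pi)=\deg\bigl((-3\alpha+9\beta)\cdot\alpha\bigr)H_\Pi^2=-3H_\Pi^2.
$$
Next, by Lemma~\ref{lem:D-nef-big} the divisor $\xi+2H$ is nef, so $\cE(2)$ is nef. Nefness is preserved under restriction, hence $(\cE|_\Pi)(2)$ is nef.

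Lemma~\ref{lem:P2-c1-0-negative}~(ii) now says that no rank $2$ bundle on $\P^2$ with $c_1=0$, $c_2=-3$ and $(\cdot)(2)$ nef exists. This contradicts the existence of $\cE|_\Pi$, so the case $(-3,9)$ does not occur.

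For $(9,-3)$ we repeat the argument with a $\beta$-plane, which gives $c_2(\cE|_\Pi)=-3H_\Pi^2$ again.

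The main point to check is the plane-restriction computation. It relies on the convention in Remark~\ref{rem:chow-ring-Q4} that $\alpha\cdot\alpha=1$ and $\alpha\cdot\beta=0$, so that restricting to an $\alpha$-plane picks out the coefficient $a$. Beyond this the argument is immediate, and it needs no splitting criterion or Fritzsche's classification.
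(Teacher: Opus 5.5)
Your proposal is correct and follows the paper's proof: restrict to an $\alpha$-plane $\Pi$, get $c_1(\cE|_\Pi)=0$ and $c_2(\cE|_\Pi)=-3H_\Pi^2$ with $(\cE|_\Pi)(2)$ nef, and contradict Lemma~\ref{lem:P2-c1-0-negative}~(ii). The only difference is that you handle $(9,-3)$ explicitly with a $\beta$-plane, whereas the paper just calls that case symmetric.
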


\begin{proof}
We treat the case $c_2=(-3,9)$. The other case is symmetric. Let $\Pi\simeq \P^2$ be an $\alpha$-plane. Then
$$
 c_1(\cE|_{\Pi})=0,
 \qquad
 c_2(\cE|_{\Pi})=-3H_{\Pi}^2.
$$
Moreover, $\cE(2)$ is nef. Hence $(\cE|_{\Pi})(2)$ is nef. By Lemma~\ref{lem:P2-c1-0-negative}~(ii), no such vector bundle $\cE|_{\Pi}$ exists. Therefore $c_2=(-3,9)$ cannot occur.
\end{proof}

\subsection{Applications of Theorem~\ref{thm:schneider}}

\begin{proposition}\label{prop:c1-0-22}
There is no rank $2$ weak Fano bundle $\cE$ on $Q^4$ with $c_1=0$ and $c_2=(2,2)$.
\end{proposition}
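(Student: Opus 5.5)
The plan is to apply Theorem~\ref{thm:schneider} to $\cE$ twisted by $\cO_{Q^4}(2)$, which forces a nonzero section of $\cE(-2)$, and then to rule that section out using the intersection numbers of Lemma~\ref{lem:intersections-c1-0}.

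\textbf{Step 1: vanishing.} Take $\cF:=\cE$ and $\cL:=\cO_{Q^4}(2)$ in Theorem~\ref{thm:schneider}. By Lemma~\ref{lem:D-nef-big}, $D_0=\xi+2H$ is nef and big, so $\cF\otimes\cL=\cE(2)$ is nef and big. Since $c_1=0$, we have $\det\cE\simeq\cO_{Q^4}$, and $K_{Q^4}\simeq\cO_{Q^4}(-4)$. Hence
\[
K_{Q^4}\otimes\cF\otimes\det\cF\otimes\cL\simeq\cE(-2),
\]
and Theorem~\ref{thm:schneider} gives $H^q(Q^4,\cE(-2))=0$ for all $q>0$.

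\textbf{Step 2: a section of $\cE(-2)$.} By Lemma~\ref{lem:RR-c1-0} with $(a,b)=(2,2)$,
\[
\chi(Q^4,\cE(-2))=\frac{2\cdot 3+2\cdot 3}{12}=1.
\]
Combined with Step~1, this gives $h^0(Q^4,\cE(-2))=1$. In particular $H^0(Q^4,\cE(-2))\neq 0$, so the divisor $\xi-2H$ on $\P(\cE)$ is effective.

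\textbf{Step 3: contradiction.} Since $D_0$ is nef and $\xi-2H$ is effective, we must have $(\xi-2H)D_0^4\ge 0$. However, Lemma~\ref{lem:intersections-c1-0} gives
\[
(\xi-2H)D_0^4=a^2+b^2-8(a+b)-96=4+4-32-96=-120<0.
\]
This contradiction shows that $c_2=(2,2)$ does not occur.

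\textbf{Main obstacle.} The only delicate point is matching the twist in Theorem~\ref{thm:schneider}, namely confirming that $K_{Q^4}\otimes\det\cE\otimes\cO(2)$ equals $\cO(-2)$. This relies on $c_1=0$; the rest is arithmetic already set up earlier in the paper.
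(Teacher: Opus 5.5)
Your proposal follows the paper's own proof step for step. The twist bookkeeping you flag as the main obstacle is fine. The real problem is that Step~1 is not a valid inference, and the paper's argument has the same gap.

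Step~1 uses only $c_1=0$ and the nefness and bigness of $\cE(2)$. Both hold for the weak Fano bundle $\cO_{Q^4}(2)\oplus\cO_{Q^4}(-2)$ of Corollary~\ref{cor:c1-0-final}. For that bundle the identical reasoning would give $H^4(Q^4,\cO_{Q^4}\oplus\cO_{Q^4}(-4))=0$, which is false because $H^4(Q^4,K_{Q^4})\simeq k$.

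So Theorem~\ref{thm:schneider} cannot be correct in the form quoted, i.e.\ with conclusion $H^q(K_X\tensor\cF\tensor\det\cF\tensor\cL)=0$ under the sole hypothesis that $\cF\tensor\cL$ is nef and big. It already fails for line bundles: with $\cF=\cO_{\P^1}(-5)$ and $\cL=\cO_{\P^1}(6)$, the bundle $\cF\tensor\cL=\cO_{\P^1}(1)$ is ample, but $H^1(\P^1,\cO_{\P^1}(-6))\neq0$. What you actually need is $H^q(Q^4,K_{Q^4}\tensor V)=0$ for all $q>0$ with $V=\cE(2)$ merely nef and big, and no such general vanishing holds.

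Step~1 in fact carries the whole proof. Since $\det\cE\simeq\cO_{Q^4}$, we have $\cE^*\simeq\cE$, and Serre duality gives $h^q(\cE(-2))=h^{4-q}(\cE(-2))$. The claimed vanishing would therefore force $h^0(\cE(-2))=h^4(\cE(-2))=0$, hence $\chi(Q^4,\cE(-2))=0\neq1$, with no intersection theory at all.

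What your Steps~2--3 do prove unconditionally is the reverse. Since $(\xi-2H)D_0^4=-120<0$, you get $H^0(Q^4,\cE(-2))=0$, hence $H^4(Q^4,\cE(-2))=0$. Then $1=\chi(Q^4,\cE(-2))=h^2-2h^1$ forces $h^2(Q^4,\cE(-2))\geq1$.

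So the missing ingredient is a proof that $H^2(Q^4,K_{Q^4}\tensor\cE(2))=0$, or some other way to exclude $c_2=(2,2)$. If $\cE(2)$ were ample, as in the Fano setting of \cite{APW94}, Theorem~\ref{thm:le} with $r=2$ would give exactly this and the case would be excluded. For a weak Fano bundle, $\cE(2)$ is only nef and big. Le Potier-type vanishing fails for such bundles in general; the bundle $\cO_{Q^4}(4)\oplus\cO_{Q^4}$ above already shows this in degree $4$. A genuinely new argument is therefore needed for this case.
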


\begin{proof}
Assume that such a vector bundle exists. Since $c_1=0$, we have $D_0=\xi+2H$ and $\cE(2)$ is nef and big. We apply Theorem~\ref{thm:schneider} with
$$
 X=Q^4,
 \qquad
 \cF=\cE,
 \qquad
 \cL=\cO_{Q^4}(2).
$$
Then $\cF\tensor \cL=\cE(2)$ is nef and big, and $\det\cE\simeq \cO_{Q^4}$. Hence
$$
 H^q(Q^4,K_{Q^4}\tensor \cE\tensor \cO_{Q^4}(2))=0
$$
for $q>0$. Since $K_{Q^4}\simeq\cO_{Q^4}(-4)$, this says $H^q(Q^4,\cE(-2))=0$ for $q>0$.

By Lemma~\ref{lem:RR-c1-0}, $\chi(Q^4, \cE(-2))=\frac{2\cdot 3+2\cdot 3}{12}=1$. Therefore $\dim H^0(Q^4,\cE(-2))=1$, so $H^0(Q^4,\cE(-2))\ne 0$. Hence $\xi-2H$ is effective. Since $D_0=\xi+2H$ is nef, we must have $(\xi-2H)D_0^4\ge 0$. However, by Lemma~\ref{lem:intersections-c1-0},
$$
 (\xi-2H)D_0^4=a^2+b^2-8(a+b)-96.
$$
Substituting $(a,b)=(2,2)$ gives
$$
 (\xi-2H)D_0^4=8-32-96=-120<0.
$$
This contradicts the nefness of $D_0$.
\end{proof}

Summarizing the results of this section, we obtain the following corollary.

\begin{corollary}\label{cor:c1-0-final}
Let $\mathcal E$ be a normalized rank $2$ weak Fano bundle on $Q^4$ with $c_1=0$. Then $\mathcal E$ is a split bundle. More precisely, $\mathcal E$ is $\mathcal O_{Q^4}\oplus \mathcal O_{Q^4}$, $\mathcal O_{Q^4}(1)\oplus \mathcal O_{Q^4}(-1)$ or $\mathcal O_{Q^4}(2)\oplus \mathcal O_{Q^4}(-2)$.
\end{corollary}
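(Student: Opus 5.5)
The corollary is a bookkeeping statement: it assembles the case analysis of this section against the numerical list of Proposition~\ref{prop:cases}. Let $\mathcal E$ be a normalized rank $2$ weak Fano bundle with $c_1=0$ and $c_2=(a,b)$. By Proposition~\ref{prop:cases}, $(a,b)$ lies in the fifteen-element list
\[
(-4,-4),\,(-1,-1),\,(-4,0),\,(0,-4),\,(-3,-3),\,(-4,8),\,(8,-4),\,(-3,5),\,(5,-3),\,(0,0),\,(-1,3),\,(3,-1),\,(-3,9),\,(9,-3),\,(2,2).
\]
I will check that each entry has been handled by one of Propositions~\ref{prop:c1-0-negative}, \ref{prop:c1-0-zero}, \ref{prop:c1-0-minus-one-three}, \ref{prop:c1-0-minus-3-9} and~\ref{prop:c1-0-22}.

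The assignment is as follows.
\begin{enumerate}
\item The nine entries with a negative coordinate other than $(-1,3),(3,-1),(-3,9),(9,-3)$ are treated by Proposition~\ref{prop:c1-0-negative}. It gives $\mathcal E\simeq\mathcal O_{Q^4}(2)\oplus\mathcal O_{Q^4}(-2)$ for $(-4,-4)$ and $\mathcal E\simeq\mathcal O_{Q^4}(1)\oplus\mathcal O_{Q^4}(-1)$ for $(-1,-1)$. It rules out $(-4,0)$, $(0,-4)$, $(-3,-3)$, $(-4,8)$, $(8,-4)$, $(-3,5)$ and $(5,-3)$.
\item The entry $(0,0)$ gives $\mathcal E\simeq\mathcal O_{Q^4}\oplus\mathcal O_{Q^4}$ by Proposition~\ref{prop:c1-0-zero}.
\item The entries $(-1,3),(3,-1)$ are excluded by Proposition~\ref{prop:c1-0-minus-one-three}.
\item The entries $(-3,9),(9,-3)$ are excluded by Proposition~\ref{prop:c1-0-minus-3-9}.
\item The entry $(2,2)$ is excluded by Proposition~\ref{prop:c1-0-22}.
\end{enumerate}

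Since these five sets exhaust the list, every surviving case is split and is one of the three bundles in the statement. Conversely, each of $\mathcal O_{Q^4}(m)\oplus\mathcal O_{Q^4}(-m)$ for $m=0,1,2$ does occur. Indeed, $-K_{\mathbb P(\mathcal E)}=2(\xi+2H)$, and $\mathcal E(2)=\mathcal O_{Q^4}(2+m)\oplus\mathcal O_{Q^4}(2-m)$ is globally generated and big because $2+m>0$. So $\xi+2H$ is nef and big.

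The only real obstacle is verifying that the list in Proposition~\ref{prop:cases} is matched exactly, with no pair omitted. That is a direct comparison of the lists above.
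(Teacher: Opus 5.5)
Given the propositions of this section, your proposal is correct and is exactly the paper's argument: the corollary is stated there as a summary, matching the fifteen pairs of Proposition~\ref{prop:cases} against Propositions~\ref{prop:c1-0-negative}, \ref{prop:c1-0-zero}, \ref{prop:c1-0-minus-one-three}, \ref{prop:c1-0-minus-3-9} and~\ref{prop:c1-0-22}; your partition of the list is exact, and your converse check for $m=0,1,2$ is correct but not required by the statement. One caution, which applies equally to the paper: the case $(2,2)$ rests on Proposition~\ref{prop:c1-0-22}, and its key vanishing $H^q(Q^4,\mathcal E(-2))=0$ for all $q>0$ cannot hold for any rank $2$ bundle with $c_1=0$ and $c_2=(2,2)$ (because $\mathcal E\simeq\mathcal E^*$, Serre duality gives $h^4(\mathcal E(-2))=h^0(\mathcal E(-2))$, so the vanishing would force $\chi(Q^4,\mathcal E(-2))=0$ rather than $1$; indeed Theorem~\ref{thm:schneider} as quoted already fails for $\cF=\mathcal O_{\P^1}(-1)^{\oplus 2}$, $\cL=\mathcal O_{\P^1}(2)$), so that exclusion needs a different argument.
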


\section{The case $c_1=-1$}

\begin{setting}\label{setting:c1-minus-one}
Throughout this section, let $\cE$ be a rank $2$ weak Fano bundle on $Q^4$ such that $c_1=-1$ and $c_2=(a,b)$. Moreover, we set
$$
D_{-1}:=\xi+\frac{5}{2}H
$$
on $\mathbb P(\cE)$. By Lemma~\ref{lem:D-nef-big}, $D_{-1}$ is nef and big.
\end{setting}

\subsection{Calculations of intersection numbers}
\begin{lemma}\label{lem:intersections-c1-minus}
If $c_1=-1$, then
\begin{itemize}
 \item[(i)] $\xi D_{-1}^4=a^2+b^2-\frac{41}{2}(a+b)+\frac{81}{8}$.
 \item[(ii)] $HD_{-1}^4=68-8(a+b)$.
\end{itemize}
In particular, $(\xi-H)D_{-1}^4=a^2+b^2-\frac{25}{2}(a+b)-\frac{463}{8}$.
\end{lemma}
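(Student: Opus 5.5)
The plan is to reproduce the Segre class computation of Lemma~\ref{lem:intersections-c1-0}, now with $c_1(\cE)=-H_{Q^4}$, expanding powers of $D_{-1}=\xi+\frac52H$ and pushing forward along $\pi$. The one point needing care is the sign convention. With Grothendieck's convention $\P(\cE)=\operatorname{Proj}\operatorname{Sym}^\bullet\cE$, the tautological relation is $\xi^2-c_1(\cE)\xi+c_2(\cE)=0$ (pulled back to $\P(\cE)$). Hence
$$
\sum_{i\ge0}s_i(\cE)t^i=\frac{1}{1-c_1(\cE)t+c_2(\cE)t^2},
$$
which agrees with the formula used in the case $c_1=0$. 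Setting $c:=c_2(\cE)$ and writing $h:=H_{Q^4}$, I would first record
$$
s_0=1,\quad s_1=-h,\quad s_2=h^2-c,\quad s_3=-h^3+2hc,\quad s_4=h^4-3h^2c+c^2.
$$

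Next I would put $t=\frac52$ and expand
$$
(\xi+tH)^4=\xi^4+4tH\xi^3+6t^2H^2\xi^2+4t^3H^3\xi+t^4H^4.
$$
Multiplying by $H$ (respectively by $\xi$) and applying the projection formula with $\pi_*(\xi^{1+i})=s_i$ and $\pi_*(H^j)=0$ turns each term into a degree computation on $Q^4$. Remark~\ref{rem:chow-ring-Q4} then evaluates these degrees via $h^4=2$, $h^2c=a+b$, $c^2=a^2+b^2$ and $h\cdot hc=a+b$.

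For (ii), the resulting terms are
\begin{itemize}
\item $hs_3=-2+2(a+b)$;
\item $10h^2s_2=10\bigl(2-(a+b)\bigr)$;
\item $\frac{75}{2}h^3s_1=-75$;
\item $\frac{125}{2}h^4s_0=125$.
\end{itemize}
These sum to $68-8(a+b)$, as claimed.

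For (i), the terms are
$$
s_4+10hs_3+\frac{75}{2}h^2s_2+\frac{125}{2}h^3s_1+\frac{625}{16}h^4.
$$
Evaluating each degree gives $a^2+b^2-6(a+b)+2$, then $-20+20(a+b)$, then $75-\frac{75}{2}(a+b)$, then $-125$, and finally $\frac{625}{8}$. Collecting, the coefficient of $(a+b)$ is $-6+20-\frac{75}{2}=-\frac{47}{2}$.

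This does not match the stated $-\frac{41}{2}$, so this is the main step where I would double-check the bookkeeping. One check is the consistency identity $D_{-1}^5=\xi D_{-1}^4+\frac52HD_{-1}^4$. Here $D_{-1}^5$ is already known from Proposition~\ref{prop:numerical-c1-minus} to be $a^2+b^2-\frac{81}{2}(a+b)+\frac{1441}{8}$. Solving this identity for $\xi D_{-1}^4$, using (ii), gives
$$
a^2+b^2-\frac{41}{2}(a+b)+\frac{81}{8},
$$
which is exactly the formula in (i). So I would use this identity both as a cross-check and as the cleanest derivation of (i). I would then re-examine the term $10hs_3$ in the direct expansion, where a sign slip is most likely.

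The final assertion follows by linearity:
$$
(\xi-H)D_{-1}^4=\xi D_{-1}^4-HD_{-1}^4=a^2+b^2-\frac{41}{2}(a+b)+\frac{81}{8}-68+8(a+b)=a^2+b^2-\frac{25}{2}(a+b)-\frac{463}{8}.
$$
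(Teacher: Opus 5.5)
Your approach is the paper's: Segre classes from $1/(1+H_{Q^4}t+c_2t^2)$, expansion of $\xi D_{-1}^4$ and $HD_{-1}^4$, and the projection formula. Your Segre classes are right, and so are your computation of (ii) and the final linear combination. The discrepancy in (i) comes from one arithmetic slip, and it is not in the term you planned to re-examine. The term $10hs_3=-20+20(a+b)$ is correct. The error is the degree of $s_4=h^4-3h^2c+c^2$: since $h^2c=a+b$, this degree is $a^2+b^2-3(a+b)+2$, not $a^2+b^2-6(a+b)+2$. With this corrected, the coefficient of $a+b$ is $-3+20-\frac{75}{2}=-\frac{41}{2}$. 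The constant is $2-20+75-125+\frac{625}{8}=\frac{81}{8}$. This is exactly (i), so the direct expansion proves the statement.

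As written, though, the proposal leaves an unresolved contradiction. It then obtains (i) from $D_{-1}^5=\xi D_{-1}^4+\frac52HD_{-1}^4$, using the value of $D_{-1}^5$ from Proposition~\ref{prop:numerical-c1-minus}. That identity is a good consistency check, and the corrected computation satisfies it. It is not an independent derivation, however. The paper gets $D_{-1}^5$ by the same Segre-class expansion, with the same $s_4$. Had your value of $s_4$ been right, that value of $D_{-1}^5$ would be wrong as well. A mismatch between two routes only shows that some input is wrong, not which one. Fix the $s_4$ evaluation and let the direct expansion stand as the proof, keeping the $D_{-1}^5$ identity only as a sanity check.
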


\begin{proof}
Recall that for a rank $2$ vector bundle $\mathcal E$, the Segre classes
are defined by $s_i(\mathcal E)=\pi_*(\xi^{1+i})$. By the same argument as in Lemma~\ref{lem:intersections-c1-0}, we have
\[
\sum_{i\ge0}s_i(\mathcal E)t^i
=
\frac{1}{
1+H_{Q^4}t+c_2(\mathcal E)t^2
}.
\]
Thus we get $s_1(\cE)=-H_{Q^4}$, $s_2(\cE)=H_{Q^4}^2-c_2(\cE)$, $s_3(\cE)=-H_{Q^4}^3+2H_{Q^4}c_2(\cE)$ and $s_4(\cE)=H_{Q^4}^4-3H_{Q^4}^2c_2(\cE)+c_2(\cE)^2$. By the same argument as in Lemma~\ref{lem:intersections-c1-0}, expanding $\xi(\xi+\frac52H)^4$ and using the projection formula, we obtain
$$
 \xi D_{-1}^4=a^2+b^2-\frac{41}{2}(a+b)+\frac{81}{8}.
$$
The other formulas are obtained in the same way.
\end{proof}

\begin{lemma}\label{lem:c1-minus-vanishing}
Assume that $c_1=-1$. Then
$$
 H^0(Q^4,\cE(-2))=0.
$$
Moreover, if $\chi(Q^4, \cE)>0$, then $H^0(Q^4,\cE)\ne 0$.
\end{lemma}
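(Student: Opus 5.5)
The plan has two parts: the vanishing of $H^0(Q^4,\cE(-2))$ comes from an intersection-number computation on $\mathbb P(\cE)$, and the nonvanishing of $H^0(Q^4,\cE)$ comes from Le Potier vanishing, exactly as in Proposition~\ref{prop:c1-0-negative}.

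\emph{Vanishing of $H^0(Q^4,\cE(-2))$.} Suppose $H^0(Q^4,\cE(-2))\neq0$. Then $\xi-2H$ is effective on $\mathbb P(\cE)$. Since $D_{-1}=\xi+\frac52H$ is nef, this forces $(\xi-2H)D_{-1}^4\ge0$. By Lemma~\ref{lem:intersections-c1-minus} we have
\[
(\xi-2H)D_{-1}^4=(\xi-H)D_{-1}^4-HD_{-1}^4
=a^2+b^2-\tfrac{9}{2}(a+b)-\tfrac{1007}{8}.
\]
So it suffices to show this quantity is negative for every $(a,b)$ allowed by Proposition~\ref{prop:numerical-c1-minus}, namely $-3\le a,b\le 12$ with $a+b\le 8$.

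Rather than checking the list in Proposition~\ref{prop:cases}, I would bound the expression directly. Write $f(t)=t^2-\frac92 t$. Then $f$ is convex, so on $[-3,12]$ it is maximized at an endpoint: $f(-3)=22.5$ and $f(12)=90$.

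If $a=12$, then $b\le -4$, which is impossible since $b\ge -3$. Applying the same argument to any large value of $a$ forces $b$ to be small. The function $f(a)+f(b)$ is convex on the convex polygon $\{-3\le a,b,\ a+b\le 8\}$, so its maximum is attained at a vertex. The vertices are $(-3,-3)$, $(-3,11)$ and $(11,-3)$. At $(11,-3)$ the value is $f(11)+f(-3)=71.5+22.5=94$, and at $(-3,-3)$ it is $45$.

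Since $94<\frac{1007}{8}\approx125.9$, we get $(\xi-2H)D_{-1}^4<0$, a contradiction. Hence $H^0(Q^4,\cE(-2))=0$.

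\emph{Nonvanishing of $H^0(Q^4,\cE)$.} By Lemma~\ref{lem:D-nef-big}, $\cE(3)$ is ample, and therefore so is $\cE(4)$. Because $K_{Q^4}\simeq\cO_{Q^4}(-4)$, Theorem~\ref{thm:le} with $r=2$ gives
\[
H^i(Q^4,\cE)=H^i\bigl(Q^4,\cE(4)\otimes K_{Q^4}\bigr)=0\quad\text{for } i\ge2.
\]
Consequently
\[
\chi(Q^4,\cE)=h^0(Q^4,\cE)-h^1(Q^4,\cE)\le h^0(Q^4,\cE),
\]
so $\chi(Q^4,\cE)>0$ forces $H^0(Q^4,\cE)\neq0$.

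The only step that needs care is the arithmetic in the first part: getting the combination $(\xi-2H)D_{-1}^4$ right from Lemma~\ref{lem:intersections-c1-minus}, and confirming the bound over the allowed region. Should the convexity bound unexpectedly fail to give a negative value, the fallback is to check the finitely many cases of Proposition~\ref{prop:cases} one by one.
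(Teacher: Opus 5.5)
Your proof is correct. The nonvanishing part is exactly the paper's argument, but you prove $H^0(Q^4,\cE(-2))=0$ by a different route.

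The paper uses no intersection numbers and no bounds on $(a,b)$. Since $c_1=-1$, one has $\cE^*\simeq\cE(1)$. Serre duality then gives
\[
H^0(Q^4,\cE(-2))^*\simeq H^4(Q^4,\cE(-1))=H^4(Q^4,\cE(3)\otimes K_{Q^4}),
\]
and this vanishes by Theorem~\ref{thm:le} because $\cE(3)$ is ample.

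You instead test the effective divisor $\xi-2H$ against the nef divisor $D_{-1}$. Your formula
\[
(\xi-2H)D_{-1}^4=a^2+b^2-\frac92(a+b)-\frac{1007}{8}
\]
follows correctly from Lemma~\ref{lem:intersections-c1-minus}. Your convexity argument over the triangle with vertices $(-3,-3)$, $(-3,11)$, $(11,-3)$ correctly bounds it by $94-\frac{1007}{8}<0$.

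The two routes trade off as follows.
\begin{itemize}
\item The paper's argument is shorter and involves no computation. It holds for every $c_2$, since it only needs ampleness of $\cE(3)$, so it does not depend on Section~3 at all.
\item Your argument needs the a priori bounds of Proposition~\ref{prop:numerical-c1-minus} and the explicit intersection formulas. In return, it stays within the nef-intersection technique used throughout Sections~5 and~6.
\item Your argument also gives a bit more. Negativity against a nef class shows that $\xi-2H$ is not even pseudo-effective, so $H^0(Q^4,S^m\cE(-2m))=0$ for all $m\ge1$.
\end{itemize}
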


\begin{proof}
By Lemma~\ref{lem:D-nef-big}, $\cE(3)$ is ample. Since $K_{Q^4}\simeq\cO_{Q^4}(-4)$, Theorem~\ref{thm:le} gives
$$
 H^i(Q^4,\cE(-1))=H^i(Q^4,\cE(3)\tensor K_{Q^4})=0
$$
for $i\ge 2$. By Serre duality, we have
$$
 H^0(Q^4,\cE(-2))^*\simeq H^4(Q^4,(\cE(-2))^*\tensor K_{Q^4})=H^4(Q^4,\cE(-1))=0.
$$
Thus $H^0(Q^4,\cE(-2))=0$. Moreover, Theorem~\ref{thm:le} applied to $\cE(4)$ gives
$$
 H^i(Q^4,\cE)=H^i(Q^4,\cE(4)\tensor K_{Q^4})=0
$$
for $i\ge 2$. Thus $\chi(Q^4, \cE)=\dim H^0(Q^4, \cE)-\dim H^1(Q^4, \cE)$, so $\chi(Q^4, \cE)>0$ implies $H^0(Q^4,\cE)\ne 0$.
\end{proof}

\subsection{Applications of the splitting criterion}
\begin{proposition}\label{prop:c1-minus-a-plus-2-negative}
There is no rank $2$ weak Fano bundle $\mathcal E$ on $Q^4$ with $c_1=-1$ and $c_2=(-3,-3),\ (-3,0),\ (0,-3),\ (-3,1),\ (1,-3),\ (-3,4),\ (4,-3),\ (-3,9)$ or $(9,-3)$.
\end{proposition}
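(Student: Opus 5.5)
The plan is to force a nonzero section, apply the splitting criterion Proposition~\ref{prop:APW-splitting} to a suitable twist of $\cE$, and then check that no split bundle has the listed Chern classes. In every listed case one of $a,b$ equals $-3$; by the symmetry $\alpha\leftrightarrow\beta$ it is enough to treat $(-3,b)$ with $b\in\{-3,0,1,4,9\}$.

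\textbf{Step 1: $H^0(Q^4,\cE)\neq0$.} Compute $\chi(Q^4,\cE)$ with Lemma~\ref{lem:RR-c1-minus}. The values are:
\begin{itemize}
\item $9$ for $(-3,-3)$;
\item $5$ for $(-3,0)$;
\item $4$ for $(-3,1)$;
\item $2$ for $(-3,4)$;
\item $2$ for $(-3,9)$.
\end{itemize}
All are positive, so Lemma~\ref{lem:c1-minus-vanishing} gives $H^0(Q^4,\cE)\neq0$. The same lemma gives $H^0(Q^4,\cE(-2))=0$.

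\textbf{Step 2: $\cE$ splits.} There are two cases.

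\emph{Case $H^0(Q^4,\cE(-1))\neq0$.} Put $\cF:=\cE(-1)$. Then $H^0(Q^4,\cF)\neq0$ and $H^0(Q^4,\cF(-1))=H^0(Q^4,\cE(-2))=0$. Since $c_1=-1$,
$$
c_2(\cF)=c_2(\cE)+c_1(\cE)\cdot(-H_{Q^4})+H_{Q^4}^2=c_2(\cE)+2H_{Q^4}^2=(a+2)\alpha+(b+2)\beta.
$$
Here $a+2=-1<0$, so Proposition~\ref{prop:APW-splitting} shows that $\cF$, and hence $\cE$, is split.

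\emph{Case $H^0(Q^4,\cE(-1))=0$.} Then $H^0(Q^4,\cE)\neq0$ and $a=-3<0$. Proposition~\ref{prop:APW-splitting} applies directly to $\cE$, so $\cE$ is split.

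\textbf{Step 3: no split bundle fits.} A split bundle with $c_1=-1$ has the form $\cO_{Q^4}(m)\oplus\cO_{Q^4}(-1-m)$, so
$$
c_2=-m(m+1)H_{Q^4}^2=\bigl(-m(m+1),-m(m+1)\bigr).
$$
This forces $a=b$, which rules out every listed case except $(-3,-3)$. That case would require $m(m+1)=3$, which has no integer solution. So none of the listed cases occurs.

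I do not expect a real obstacle. The only points needing care are the twisting formula for $c_2(\cE(-1))$ when $c_1=-1$, and the positivity of $\chi$ in each case, which is what ensures the nonzero section that Proposition~\ref{prop:APW-splitting} needs.
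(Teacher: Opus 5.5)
Your proposal is correct and follows the paper's proof essentially step for step. Both arguments get $H^0(Q^4,\cE)\neq0$ and $H^0(Q^4,\cE(-2))=0$ from Lemma~\ref{lem:RR-c1-minus} and Lemma~\ref{lem:c1-minus-vanishing}. Both then apply Proposition~\ref{prop:APW-splitting} either to $\cE(-1)$, whose second Chern class is $(a+2,b+2)$, or directly to $\cE$. Both finish by noting that split bundles have $c_2=(-m(m+1),-m(m+1))$. Your explicit $\chi$ values and the symmetry reduction to $a=-3$ are accurate and simply make the paper's ``in all cases'' checks concrete.
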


\begin{proof}
In all cases, Lemma~\ref{lem:RR-c1-minus} gives $\chi(Q^4, \cE)>0$. Hence, by Lemma~\ref{lem:c1-minus-vanishing}, $H^0(Q^4,\cE)\ne 0$ and $H^0(Q^4,\cE(-2))=0$. 

If $H^0(Q^4,\cE(-1))\ne 0$, set $\cF=\cE(-1)$. Then $H^0(Q^4,\cF)\ne 0$ and $H^0(Q^4,\cF(-1))=0$. Furthermore, $c_2(\cF)=c_2(\cE(-1))=c_2(\cE)+2H_{Q^4}^2=(a+2)\alpha+(b+2)\beta$. For the cases under consideration, $a+2<0$ or $b+2<0$. Proposition~\ref{prop:APW-splitting} implies that $\cF$ is a split bundle. Hence $\cE$ is a split bundle.

If $H^0(Q^4,\cE(-1))=0$, then Proposition~\ref{prop:APW-splitting} applies directly to $\cE$, since $a<0$ or $b<0$. Thus $\cE$ is a split bundle.

A normalized split bundle with $c_1=-1$ is of the form
\[
\cO_{Q^4}(m)\oplus \cO_{Q^4}(-1-m)
\]
for some $m\in\mathbb Z$. Therefore
\[
c_2(\cE)=-m(m+1)H_{Q^4}^2=-m(m+1)\alpha-m(m+1)\beta.
\]
The listed cases are not of this form. Therefore they do not occur.
\end{proof}

\begin{proposition}\label{prop:c1-minus-minus2}
Let $\mathcal E$ be a rank $2$ weak Fano bundle on $Q^4$. If $c_1=-1$ and $c_2=(-2,-2)$, then $\mathcal E\simeq\mathcal O_{Q^4}(1)\oplus \mathcal O_{Q^4}(-2)$.
\end{proposition}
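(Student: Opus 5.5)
We follow the pattern of Proposition~\ref{prop:c1-minus-a-plus-2-negative}: first produce a section of $\cE$ or of a negative twist, then apply the splitting criterion Proposition~\ref{prop:APW-splitting}, and finally read off the summands from the Chern classes.

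\emph{Step 1: a section of $\cE$.} By Lemma~\ref{lem:RR-c1-minus} with $(a,b)=(-2,-2)$,
$$
\chi(Q^4,\cE)=\frac{4+4+52+12}{12}=6>0.
$$
Hence Lemma~\ref{lem:c1-minus-vanishing} gives $H^0(Q^4,\cE)\neq0$ and $H^0(Q^4,\cE(-2))=0$.

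\emph{Step 2: splitting.} We split into two cases according to $H^0(Q^4,\cE(-1))$.
\begin{itemize}
\item[(a)] Suppose $H^0(Q^4,\cE(-1))\ne0$, and set $\cF:=\cE(-1)$. Then $H^0(Q^4,\cF)\neq0$ and $H^0(Q^4,\cF(-1))=0$. Moreover
$$
c_2(\cF)=c_2(\cE)+2H_{Q^4}^2=(0,0),
$$
so $a+2=b+2=0$. Proposition~\ref{prop:APW-splitting} then shows that $\cF$, and hence $\cE$, is split.
\item[(b)] Suppose $H^0(Q^4,\cE(-1))=0$. Since $a=-2<0$, Proposition~\ref{prop:APW-splitting} applies directly to $\cE$, so $\cE$ is split.
\end{itemize}

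\emph{Step 3: identification.} A split normalized bundle with $c_1=-1$ has the form $\cO_{Q^4}(m)\oplus\cO_{Q^4}(-1-m)$, so $c_2=-m(m+1)(\alpha+\beta)$. The condition $m(m+1)=2$ gives $m=1$ or $m=-2$, and both choices yield the same bundle $\cO_{Q^4}(1)\oplus\cO_{Q^4}(-2)$.

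\emph{Consistency check.} This bundle is indeed weak Fano. Its twist $\cE(5/2)=\cO(7/2)\oplus\cO(1/2)$ is nef, and
$$
D_{-1}^5=8(4+4)-324(-4)+1441>0
$$
by \eqref{eq:c1-minus-top}, so $D_{-1}$ is also big. This is not needed for the stated implication.

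The only step requiring any care is the case distinction in Step~2, namely checking that the hypotheses of Proposition~\ref{prop:APW-splitting} hold in each branch, particularly the borderline $a+2=b+2=0$ condition in case (a). Once that is verified, the rest is routine.
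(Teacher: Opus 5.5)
Your proof is correct and follows the paper's argument step for step: $\chi(Q^4,\cE)=6$ gives a section, and the dichotomy on $H^0(Q^4,\cE(-1))$ feeds Proposition~\ref{prop:APW-splitting} (with $c_2(\cE(-1))=(0,0)$ in one branch and negative components in the other), after which the Chern classes force $\cO_{Q^4}(1)\oplus\cO_{Q^4}(-2)$. One harmless slip: in your optional consistency check, the displayed expression equals $8D_{-1}^5$, not $D_{-1}^5$.
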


\begin{proof}
By Lemma~\ref{lem:RR-c1-minus}, $\chi(Q^4, \cE)=6>0$. Hence $H^0(Q^4,\cE)\ne 0$ by Lemma~\ref{lem:c1-minus-vanishing}. We also have $H^0(Q^4,\cE(-2))=0$.

If $H^0(Q^4,\cE(-1))\ne 0$, then $\cF:=\cE(-1)$ satisfies
$$
 H^0(\cF)\ne 0,
 \qquad
 H^0(\cF(-1))=0,
$$
and
$$
 c_2(\cF)=c_2(\cE)+2H_{Q^4}^2=0.
$$
By Proposition~\ref{prop:APW-splitting}, $\cF$ is a split bundle. Hence $\cE$ is a split bundle.

If $H^0(Q^4,\cE(-1))=0$, then Proposition~\ref{prop:APW-splitting} applies directly to $\cE$, since both components of $c_2$ are negative. Hence $\cE$ is a split bundle. The only normalized split bundle with $c_1=-1$ and $c_2=(-2,-2)$ is $\cO_{Q^4}(1)\oplus \cO_{Q^4}(-2)$.
\end{proof}

\subsection{The cases $(-2,3)$, $(3,-2)$ and $(0,0)$}

\begin{proposition}\label{prop:c1-minus-small}
Let $\mathcal E$ be a rank $2$ weak Fano bundle on $Q^4$ with $c_1=-1$. Then there is no such $\mathcal E$ with $c_2=(-2,3)$ or $(3,-2)$. Moreover, if $c_2=(0,0)$, then $\mathcal E\simeq \mathcal O_{Q^4}\oplus \mathcal O_{Q^4}(-1)$.
\end{proposition}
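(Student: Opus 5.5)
The plan is to handle all three cases by restricting to planes and then invoking Fritzsche's classification, in the same way as Proposition~\ref{prop:c1-0-minus-one-three}, with Lemma~\ref{lem:P2-c1-minus-negative} as the input on $\P^2$.

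\emph{The cases $(-2,3)$ and $(3,-2)$.} By symmetry it suffices to treat $c_2=(-2,3)$. Let $\Pi$ be an $\alpha$-plane. Then $c_1(\cE|_\Pi)=-H_\Pi$ and $c_2(\cE|_\Pi)=-2H_\Pi^2$, since $\alpha\cdot\alpha=1$ and $\beta\cdot\alpha=0$. The formal twist $\cE(5/2)$ is nef by Lemma~\ref{lem:D-nef-big}, so its restriction $(\cE|_\Pi)(5/2)$ is nef as well. Lemma~\ref{lem:P2-c1-minus-negative}~(ii) then gives $\cE|_\Pi\simeq\cO_\Pi(1)\oplus\cO_\Pi(-2)$ for every $\alpha$-plane $\Pi$. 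By Lemma~\ref{Hari}, every line $\ell\subset Q^4$ lies in some $\alpha$-plane, so $\cE|_\ell\simeq\cO_\ell(1)\oplus\cO_\ell(-2)$ for all lines. Hence $\cE(-1)$ is linear-uniform of type $3$. Proposition~\ref{prop:Fritzsche}~(ii) then says $\cE$ splits. But a split bundle with $c_1=-1$ has $c_2=-m(m+1)(\alpha+\beta)$, which has equal components, so this contradicts $c_2=(-2,3)$.

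\emph{The case $(0,0)$.} Restricting to any plane $\Pi$, of either family, gives $c_1=-H_\Pi$ and $c_2=0$, with $(\cE|_\Pi)(5/2)$ nef. Lemma~\ref{lem:P2-c1-minus-negative}~(i) yields $\cE|_\Pi\simeq\cO_\Pi\oplus\cO_\Pi(-1)$. Therefore $\cE|_\ell\simeq\cO_\ell\oplus\cO_\ell(-1)$ for every line $\ell$, so $\cE$ is linear-uniform of type $1$. By Proposition~\ref{prop:Fritzsche}~(i), $\cE$ is either split or a twist of a spinor bundle.

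It remains to rule out the spinor case, and I expect this to be the one point needing care. A spinor bundle $S$ on $Q^4$ normalized with $c_1=-1$ has $c_2$ equal to $\alpha$ or $\beta$, that is $(1,0)$ or $(0,1)$. Any twist $S(t)$ with $c_1(S(t))=-1$ must have $t=0$, so a twist of a spinor bundle cannot have $c_2=(0,0)$. Thus $\cE$ is split. Among split bundles $\cO(m)\oplus\cO(-1-m)$, the condition $c_2=0$ forces $m=0$ or $m=-1$, and both give $\cO_{Q^4}\oplus\cO_{Q^4}(-1)$.

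An alternative route for $(0,0)$ avoids the spinor computation. Lemma~\ref{lem:RR-c1-minus} gives $\chi(\cE)=1>0$, so Lemma~\ref{lem:c1-minus-vanishing} yields $H^0(\cE)\neq0$ and $H^0(\cE(-2))=0$. If $H^0(\cE(-1))=0$, Proposition~\ref{prop:APW-splitting} applies directly because $a=b=0$. If $H^0(\cE(-1))\neq0$, one rules this out via $(\xi-H)D_{-1}^4=-463/8<0$ from Lemma~\ref{lem:intersections-c1-minus}, which contradicts the nefness of $D_{-1}$. I would present this argument, which is self-contained, and keep the Fritzsche argument for the $(\pm)$ cases.
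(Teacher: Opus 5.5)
Your proof is correct. For $(-2,3)$ and $(3,-2)$ it takes a different route from the paper; for $(0,0)$, the argument you chose to present is the paper's own.

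\textbf{The paper's route.} The paper handles all three cases with one argument:
\begin{enumerate}
\item Lemma~\ref{lem:RR-c1-minus} gives $\chi(Q^4,\cE)=1$, so $H^0(Q^4,\cE)\neq0$ by Lemma~\ref{lem:c1-minus-vanishing}.
\item The value of $(\xi-H)D_{-1}^4$ is $-459/8$ for $(\pm2,\mp3)$ and $-463/8$ for $(0,0)$. This negativity forces $H^0(Q^4,\cE(-1))=0$.
\item Proposition~\ref{prop:APW-splitting} then applies, since $a<0$, $b<0$, or $a=b=0$.
\end{enumerate}

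\textbf{Your route for $(\pm2,\mp3)$.} You restrict to $\alpha$-planes (resp.\ $\beta$-planes). Lemma~\ref{lem:P2-c1-minus-negative}~(ii) gives $\cO_\Pi(1)\oplus\cO_\Pi(-2)$ on each such plane. Fritzsche's classification then makes $\cE(-1)$ linear-uniform of type $3$, hence split. This is exactly the argument the paper uses later, in Proposition~\ref{prop:c1-minus-additional}, for $(-2,6)$ and $(-2,7)$.

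\textbf{Comparison.}
\begin{itemize}
\item Your argument sees only the negative component $a=-2$, since $c_2(\cE|_\Pi)=a$, and never uses $b$. So it rules out every $(-2,b)$ with $b\neq-2$ at once.
\item It needs neither Riemann--Roch nor Le Potier vanishing, but it relies on Fritzsche's classification of linear-uniform bundles.
\item The paper's route stays within the APW toolkit: Riemann--Roch, vanishing, intersection numbers on $\P(\cE)$, and the splitting criterion. It treats the three cases uniformly, but depends on the specific facts $\chi(\cE)=1>0$ and the sign of $(\xi-H)D_{-1}^4$, checked case by case.
\end{itemize}

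Your alternative Fritzsche-based treatment of $(0,0)$ is also valid; it mirrors Proposition~\ref{prop:c1-minus-04}. You correctly note that a spinor twist with $c_1=-1$ must be untwisted, so its $c_2$ is $(1,0)$ or $(0,1)$.
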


\begin{proof}
For \((-2,3)\), \((3,-2)\), and \((0,0)\), Lemma~\ref{lem:RR-c1-minus} gives \(\chi(Q^4, \cE)=1\). Hence $H^0(Q^4,\cE)\neq0$ by Lemma~\ref{lem:c1-minus-vanishing}. We claim that $H^0(Q^4,\cE(-1))=0$. Indeed, if $H^0(Q^4,\cE(-1))\ne 0$, then $\xi-H$ is effective. Since $D_{-1}=\xi+\frac{5}{2}H$ is nef, we would have
$$
 (\xi-H)D_{-1}^4\ge 0.
$$
But Lemma~\ref{lem:intersections-c1-minus} gives $(\xi-H)D_{-1}^4=a^2+b^2-\frac{25}{2}(a+b)-\frac{463}{8}$. For $(-2,3)$ and $(3,-2)$, this equals
$$
 4+9-\frac{25}{2}-\frac{463}{8}=-\frac{459}{8}<0.
$$
For $(0,0)$, it equals
$$
 -\frac{463}{8}<0.
$$
This is a contradiction. Now Proposition~\ref{prop:APW-splitting} applies to $\cE$. The cases $(-2,3)$ and $(3,-2)$ force splitting, but no normalized split bundle with $c_1=-1$ has either of these second Chern classes. Hence they do not occur. In the case $(0,0)$, the split bundle is $\cO_{Q^4}\oplus \cO_{Q^4}(-1)$. 
\end{proof}

\subsection{Applications of Lemma~\ref{lem:P2-c1-minus-negative}}

\begin{proposition}\label{prop:c1-minus-04}
There is no rank $2$ weak Fano bundle $\mathcal E$ on $Q^4$ with $c_1=-1$ and $c_2=(0,4)$ or $(4,0)$.
\end{proposition}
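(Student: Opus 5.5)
We treat the case $c_2=(0,4)$; the case $(4,0)$ follows by exchanging the roles of $\alpha$- and $\beta$-planes. The plan mirrors Proposition~\ref{prop:c1-0-minus-one-three}. First restrict $\cE$ to an $\alpha$-plane $\Pi\simeq\P^2$. Since $[\Pi]=\alpha$, we have $c_2(\cE)\cdot\alpha=0\cdot\alpha^2+4\,\alpha\beta=0$. Hence
$$
c_1(\cE|_\Pi)=-H_\Pi,\qquad c_2(\cE|_\Pi)=0.
$$
Since $D_{-1}=\xi+\frac52H$ is nef, its restriction to $\P(\cE|_\Pi)$ is nef, so the formal $\mathbb Q$-twist $(\cE|_\Pi)(5/2)$ is nef. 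Lemma~\ref{lem:P2-c1-minus-negative}~(i) then gives
$$
\cE|_\Pi\simeq\cO_\Pi\oplus\cO_\Pi(-1)
$$
for every $\alpha$-plane $\Pi$.

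Next, by Lemma~\ref{Hari}, every line $\ell\subset Q^4$ lies in some $\alpha$-plane $\Pi$. It follows that $\cE|_\ell\simeq\cO_\ell\oplus\cO_\ell(-1)$ for every line $\ell$. Thus $\cE$ is linear-uniform of type $1$ in the sense of Definition~\ref{lin}.

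Proposition~\ref{prop:Fritzsche}~(i) now says that $\cE$ is either split or a twist of a spinor bundle. We rule out both possibilities by comparing Chern classes.
\begin{itemize}
\item A normalized split bundle with $c_1=-1$ has the form $\cO_{Q^4}(m)\oplus\cO_{Q^4}(-1-m)$. Its second Chern class is $c_2=-m(m+1)(\alpha+\beta)$, which has equal components. So $(0,4)$ is impossible.
\item A twist of a spinor bundle with $c_1=-1$ must be the spinor bundle itself, since twisting changes $c_1$ by an even multiple of $H_{Q^4}$. Its second Chern class is $\alpha$ or $\beta$, i.e.\ $(1,0)$ or $(0,1)$ (compare the remaining cases in Proposition~\ref{prop:cases}). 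This is not $(0,4)$.
\end{itemize}
Hence no such $\cE$ exists.

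The only step needing care is the Chern class bookkeeping for the spinor bundles. The normalized spinor bundles on $Q^4$ have $c_1=-1$ and $c_2$ equal to the class of a single plane, one for each family. Once this is recalled, the contradiction is immediate. Everything else is a direct application of lemmas already established in the paper.
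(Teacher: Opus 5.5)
Your proposal is correct and follows essentially the same route as the paper. Both arguments restrict to $\alpha$-planes, apply Lemma~\ref{lem:P2-c1-minus-negative}~(i), and use Lemma~\ref{Hari} to get linear-uniformity of type $1$; both then exclude the split and spinor outcomes of Proposition~\ref{prop:Fritzsche}~(i) by comparing second Chern classes.
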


\begin{proof}
We treat the case $c_2=(0,4)$. The other case is symmetric. Let $\Pi\simeq \P^2$ be an $\alpha$-plane. Then
$$
 c_1(\cE|_{\Pi})=-H_{\Pi},
 \qquad
 c_2(\cE|_{\Pi})=0.
$$
Since $D_{-1}=\xi+\frac52H$ is nef, the formal $\Q$-twist $(\cE|_{\Pi})(5/2)$ is nef. By Lemma~\ref{lem:P2-c1-minus-negative}~(i),
$$
 \cE|_{\Pi}\simeq \cO_{\Pi}\oplus \cO_{\Pi}(-1)
$$
for every $\alpha$-plane $\Pi$. Let $\ell\subset Q^4$ be a line. By Lemma~\ref{Hari}, there is a
unique $\alpha$-plane $\Pi$ containing $\ell$. Therefore
\[
\cE|_\ell
\simeq
\mathcal O_\ell\oplus\mathcal O_\ell(-1).
\]
Thus $\cE$ is linear-uniform of type $1$. By Proposition~\ref{prop:Fritzsche}~(i), \(\cE\) is either a split bundle or a twist of one of the two spinor bundles. Since $c_1=-1$ and the normalized spinor bundles have first Chern class $-1$, this twist is trivial. If $\cE$ is one of the spinor bundles, then $c_2=(1,0)$ or $(0,1)$, which contradicts $c_2=(0,4)$. If $\cE$ is a split bundle, then
$$
c_2=(-m(m+1),-m(m+1))
$$
for some \(m\in\mathbb Z\). This contradicts $c_2=(0,4)$.
\end{proof}

\begin{proposition}\label{prop:c1-minus-additional}
There is no rank $2$ weak Fano bundle $\mathcal E$ on $Q^4$ with $c_1=-1$ and $c_2=(-2,6)$, $(6,-2)$, $(-2,7)$ or $(7,-2)$. 
\end{proposition}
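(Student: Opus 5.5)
We restrict to planes, as in Proposition~\ref{prop:c1-minus-04}. Treat $c_2=(-2,6)$ and $(-2,7)$; the cases $(6,-2)$ and $(7,-2)$ follow by exchanging the two families of planes. Let $\Pi\simeq\P^2$ be an $\alpha$-plane. Since $\alpha^2=1$ and $\alpha\beta=0$, we have
$$
c_1(\cE|_\Pi)=-H_\Pi,\qquad c_2(\cE|_\Pi)=c_2(\cE)\cdot\alpha=-2H_\Pi^2 .
$$
The divisor $D_{-1}=\xi+\frac52H$ is nef, so the formal $\Q$-twist $(\cE|_\Pi)(5/2)$ is nef. Lemma~\ref{lem:P2-c1-minus-negative}~(ii) then gives
$$
\cE|_\Pi\simeq\cO_\Pi(1)\oplus\cO_\Pi(-2)
$$
for every $\alpha$-plane $\Pi$.

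Next let $\ell\subset Q^4$ be any line. By Lemma~\ref{Hari}, $\ell$ lies in a unique $\alpha$-plane $\Pi$. Restricting the splitting above gives $\cE|_\ell\simeq\cO_\ell(1)\oplus\cO_\ell(-2)$. Hence $\cE(-1)|_\ell\simeq\cO_\ell\oplus\cO_\ell(-3)$ for every line, so $\cE(-1)$ is linear-uniform of type $3$ in the sense of Definition~\ref{lin}. By Proposition~\ref{prop:Fritzsche}~(ii), $\cE(-1)$ is split, and therefore $\cE$ is split.

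Since $c_1=-1$, a split $\cE$ must be $\cO_{Q^4}(m)\oplus\cO_{Q^4}(-1-m)$. Its second Chern class is then $(-m(m+1),-m(m+1))$, whose two components are equal. This is incompatible with $(-2,6)$ and $(-2,7)$, so neither case occurs.

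The only step needing care is the restriction: we must check that $c_2(\cE)\cdot\alpha$ really picks out the coefficient $a=-2$, using the intersection table in Remark~\ref{rem:chow-ring-Q4}, and that nefness of the $\Q$-twist passes to $\Pi$. Both are immediate. Note that this argument never uses the $\beta$-component $b$, so the whole weight rests on Lemma~\ref{lem:P2-c1-minus-negative}~(ii).
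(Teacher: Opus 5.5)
Your proposal is correct and follows the paper's proof essentially step for step. You restrict to $\alpha$-planes, use Lemma~\ref{lem:P2-c1-minus-negative}~(ii) to get $\cE|_\Pi\simeq\cO_\Pi(1)\oplus\cO_\Pi(-2)$, use Lemma~\ref{Hari} to deduce that $\cE(-1)$ is linear-uniform of type $3$, apply Fritzsche to conclude that $\cE$ splits, and then contradict the unequal components of $c_2$.
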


\begin{proof}
We treat the cases $c_2=(-2,6)$ and $(-2,7)$. The cases $c_2=(6,-2)$ and $(7,-2)$ are treated in the same way.

Let $\Pi\subset Q^4$ be an $\alpha$-plane. Then $\Pi\simeq \mathbb P^2$ and 
$$
c_1(\mathcal E|_{\Pi})=-H_{\Pi},\qquad c_2(\mathcal E|_{\Pi})=-2H_{\Pi}^2.
$$
Moreover, since the formal \(\mathbb Q\)-twist $\mathcal E(5/2)$ is nef, its restriction $(\mathcal E|_{\Pi})(5/2)$ is nef. By Lemma~\ref{lem:P2-c1-minus-negative}~(ii), we obtain $\mathcal E|_{\Pi}\simeq\mathcal O_{\Pi}(1)\oplus\mathcal O_{\Pi}(-2)$ for every $\alpha$-plane $\Pi$.

By Lemma~\ref{Hari}, every line $\ell\subset Q^4$ is contained in a unique $\alpha$-plane. Hence, for every line $\ell\subset Q^4$, we have
$$
\mathcal E|_\ell\simeq \mathcal O_\ell(1)\oplus\mathcal O_\ell(-2).
$$
Therefore $\mathcal E(-1)|_\ell\simeq\mathcal O_\ell\oplus\mathcal O_\ell(-3)$. Thus $\mathcal E(-1)$ is linear-uniform of type $3$. By Proposition~\ref{prop:Fritzsche}~(ii), $\mathcal E(-1)$ is a split bundle. Hence $\mathcal E$ is also a split bundle. Since $c_1=-1$, a split bundle $\mathcal E$ is of the form
$$
\mathcal E\simeq
\mathcal O_{Q^4}(m)\oplus
\mathcal O_{Q^4}(-1-m)
$$
for some $m\in\mathbb Z$. Hence
$$
c_2(\mathcal E)=-m(m+1)H_{Q^4}^2=-m(m+1)\alpha-m(m+1)\beta.
$$
In either case, we obtain a contradiction. Hence these cases do not occur.
\end{proof}

\subsection{The remaining cases}

\begin{proposition}\label{prop:c1-minus-remaining}
Let $\mathcal E$ be a rank $2$ weak Fano bundle on $Q^4$ with $c_1=-1$. Assume that $\mathcal E$ is not a split bundle. Then $\cE$ is one of the following:
\begin{enumerate}
\item $c_2=(1,0)$, in which case $\mathcal E$ is the
spinor bundle with $c_2=(1,0)$;
\item $c_2=(0,1)$, in which case $\mathcal E$ is the
spinor bundle with $c_2=(0,1)$;
\item $c_2=(1,1)$, in which case $\mathcal E$ is one of the $H_{Q^4}$-stable bundles described in \cite[Example~2.2]{APW94}.
\end{enumerate}
\end{proposition}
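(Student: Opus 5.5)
The remaining numerical cases with $c_1=-1$ from Proposition~\ref{prop:cases}, not yet treated by Propositions~\ref{prop:c1-minus-a-plus-2-negative}--\ref{prop:c1-minus-additional}, are exactly $(1,0)$, $(0,1)$ and $(1,1)$. The plan is to handle these three cases one at a time, using the same tools as before: Lemma~\ref{lem:RR-c1-minus} to produce sections, Lemma~\ref{lem:c1-minus-vanishing}, the effectivity test against the nef divisor $D_{-1}$, and restriction to planes.

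\emph{Cases $(1,0)$ and $(0,1)$.} By symmetry it suffices to treat $c_2=(1,0)$. Lemma~\ref{lem:RR-c1-minus} gives $\chi(Q^4,\cE)=(1-13+12)/12=0$, so no section is forced. I will therefore argue by restriction. Let $\Pi$ be a $\beta$-plane. Since $\alpha\beta=0$, we get $c_2(\cE|_\Pi)=0$ and $c_1(\cE|_\Pi)=-H_\Pi$, and $(\cE|_\Pi)(5/2)$ is nef. Lemma~\ref{lem:P2-c1-minus-negative}~(i) then gives $\cE|_\Pi\simeq\cO_\Pi\oplus\cO_\Pi(-1)$. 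By Lemma~\ref{Hari}, every line lies in a $\beta$-plane, so $\cE|_\ell\simeq\cO_\ell\oplus\cO_\ell(-1)$ for every line $\ell$. Hence $\cE$ is linear-uniform of type $1$. Proposition~\ref{prop:Fritzsche}~(i) says $\cE$ is split or a twist of a spinor bundle. Split bundles with $c_1=-1$ have $c_2=(-m(m+1),-m(m+1))\neq(1,0)$, and the twist must be trivial because $c_1=-1$. So $\cE$ is the spinor bundle with $c_2=(1,0)$, and likewise for $(0,1)$.

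\emph{Case $(1,1)$.} Here $\chi(Q^4,\cE)=(2-26+12)/12=-1$. I will first show $H^0(Q^4,\cE)=0$, which gives $H_{Q^4}$-stability by Proposition~\ref{prop:stable-section}~(i). Suppose $H^0(Q^4,\cE)\neq0$. Then $\xi$ is effective, so $\xi D_{-1}^4\ge0$. But Lemma~\ref{lem:intersections-c1-minus}~(i) gives $\xi D_{-1}^4=2-41+81/8<0$, a contradiction. Thus $\cE$ is stable with $c_1=-1$ and $c_2=\alpha+\beta$. The final step is to identify $\cE$ with one of the bundles of \cite[Example~2.2]{APW94}. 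I expect to do this by quoting APW's description, or the classification of stable rank $2$ bundles on $Q^4$ with these Chern classes (every such bundle arises from the construction there). This identification is the main obstacle. If a direct citation is not available, I would take a section of $\cE(1)$ (Riemann--Roch plus Lemma~\ref{thm:le} should give $h^0(\cE(1))>0$), show its zero locus is a codimension-$2$ subscheme of class $c_2(\cE(1))=(1,1)+H^2=(2,2)$ with $\det=\cO(1)$, and compare this with the Serre construction in \cite[Example~2.2]{APW94}.

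Finally, no other $c_2$ survives. Every other entry of the $c_1=-1$ list in Proposition~\ref{prop:cases} either does not occur or is split, by the earlier propositions. Therefore a non-split $\cE$ must fall into one of the three cases above.
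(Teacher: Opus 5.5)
Your proof is correct. For $c_2=(1,1)$ it follows the paper's argument. The value $\xi D_{-1}^4=-\frac{231}{8}<0$ rules out $H^0(Q^4,\cE)\neq0$, Proposition~\ref{prop:stable-section}~(i) then gives $H_{Q^4}$-stability, and the identification is quoted from \cite[Example~2.2]{APW94}.

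For $(1,0)$ and $(0,1)$ you take a genuinely different route. The paper treats all three cases uniformly. The same effectivity test gives $\xi D_{-1}^4=-\frac{75}{8}<0$, hence $H^0(Q^4,\cE)=0$ and stability. It then appeals to \cite{APW94} for the fact that stable bundles with $c_1=-1$ and $c_2=(1,0)$ or $(0,1)$ are exactly the spinor bundles.

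You instead restrict to planes of the opposite family, where $c_2(\cE|_\Pi)=0$. Lemma~\ref{lem:P2-c1-minus-negative}~(i) and Lemma~\ref{Hari} then give linear-uniformity of type $1$, and Proposition~\ref{prop:Fritzsche}~(i) finishes the argument. This is exactly how the paper handles $(0,4)$ in Proposition~\ref{prop:c1-minus-04}.

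Comparing the two:
\begin{itemize}
\item Your version identifies the spinor bundles using only tools already set up in the paper. It needs neither a stability argument nor an external classification of stable bundles with these Chern classes.
\item The paper's version is shorter and uniform across the three cases.
\item Your plane-restriction trick cannot reach $(1,1)$. There every plane restriction has $c_2(\cE|_\Pi)=1$, which lies outside the range of Lemma~\ref{lem:P2-c1-minus-negative}. So in that case both proofs must rest on \cite{APW94}.
\end{itemize}

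One slip in your fallback for $(1,1)$: $c_2(\cE(1))=c_2(\cE)-H_{Q^4}^2+H_{Q^4}^2=(1,1)$, not $(2,2)$. This corrected class fits the zero locus being two disjoint planes, one from each family, with $\omega_Z\simeq\cO_Z(-3)$, as in the Serre construction of \cite[Example~2.2]{APW94}. Your claim $h^0(\cE(1))\ge1$ is right. Indeed $\chi(Q^4,\cE(1))=1$, and Theorem~\ref{thm:le} applied to $\cE(5)$ kills the higher cohomology.
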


\begin{proof}
By Proposition~\ref{prop:cases} and the preceding propositions, the only remaining cases are $(1,0)$, $(0,1)$, and $(1,1)$. If $H^0(Q^4,\cE)\ne 0$, then $\xi$ would be effective. Since $D_{-1}$ is nef, we have $\xi(\xi+\frac{5}{2}H)^4\ge 0$. But Lemma~\ref{lem:intersections-c1-minus} gives 
$$
\xi \left(\xi+\frac{5}{2}H\right)^4=-\frac{75}{8}
$$
for $c_2=(1,0)$ or $(0,1)$ and
$$
\xi \left(\xi+\frac{5}{2}H\right)^4=-\frac{231}{8}
$$
for $c_2=(1,1)$. Therefore $H^0(Q^4,\cE)=0$. By Proposition~\ref{prop:stable-section}~(i), $\cE$ is $H_{Q^4}$-stable. 

Now we use the classification results in
\cite[Examples~2.1 and~2.2]{APW94}. $H_{Q^4}$-stable bundles of rank $2$ with
$$
 c_1=-1,
 \qquad
 c_2=(1,0)\text{ or }(0,1)
$$
are precisely the two spinor bundles. $H_{Q^4}$-stable bundles of rank $2$ with
$$
 c_1=-1,
 \qquad
 c_2=(1,1)
$$
are precisely the bundles described in \cite[Example~2.2]{APW94}, constructed from two disjoint planes by Serre's construction. These bundles are shown to be Fano bundles in \cite{APW94} and hence are weak Fano bundles.
\end{proof}

Summarizing the results of this section, we obtain the following corollary.

\begin{corollary}\label{cor:c1-minus-final}
Let $\mathcal E$ be a normalized rank $2$ weak Fano bundle on $Q^4$ with $c_1=-1$. Then $\mathcal E$ is $\mathcal O_{Q^4}\oplus \mathcal O_{Q^4}(-1)$, $\mathcal O_{Q^4}(1)\oplus \mathcal O_{Q^4}(-2)$, one of the two spinor bundles on $Q^4$, or one of the $H_{Q^4}$-stable bundles with $c_1=-1$ and $c_2=(1,1)$ described in \cite[Example~2.2]{APW94}.
\end{corollary}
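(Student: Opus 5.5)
This corollary follows by assembling the results of this section with the list in Proposition~\ref{prop:cases}. I will take the list of possible values of $c_2=(a,b)$ for $c_1=-1$ from Proposition~\ref{prop:cases} and go through it case by case, assigning each pair to the proposition of this section that handles it.

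The assignment is as follows.
\begin{enumerate}
\item The nine pairs with a component equal to $-3$, namely $(-3,-3),(-3,0),(0,-3),(-3,1),(1,-3),(-3,4),(4,-3),(-3,9),(9,-3)$, are excluded by Proposition~\ref{prop:c1-minus-a-plus-2-negative}.
\item The pair $(-2,-2)$ yields $\cO_{Q^4}(1)\oplus\cO_{Q^4}(-2)$ by Proposition~\ref{prop:c1-minus-minus2}.
\item The pairs $(-2,3),(3,-2)$ are excluded, and $(0,0)$ yields $\cO_{Q^4}\oplus\cO_{Q^4}(-1)$, by Proposition~\ref{prop:c1-minus-small}.
\item The pairs $(0,4),(4,0)$ are excluded by Proposition~\ref{prop:c1-minus-04}.
\item The pairs $(-2,6),(6,-2),(-2,7),(7,-2)$ are excluded by Proposition~\ref{prop:c1-minus-additional}.
\end{enumerate}
This leaves $(1,0),(0,1),(1,1)$, which are treated by Proposition~\ref{prop:c1-minus-remaining}.

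For these last three pairs I split according to whether $\cE$ splits. If $\cE$ is not split, Proposition~\ref{prop:c1-minus-remaining} identifies it as a spinor bundle or as one of the stable bundles of \cite[Example~2.2]{APW94}. If $\cE$ is split, it has the form $\cO_{Q^4}(m)\oplus\cO_{Q^4}(-1-m)$, so $c_2=-m(m+1)(\alpha+\beta)$. Such a $c_2$ has equal components that are $\le 0$, so it cannot be $(1,0)$, $(0,1)$ or $(1,1)$; hence no split bundle occurs here. Collecting all the outcomes gives exactly the list in the statement.

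The only step that needs care is checking that every one of the $22$ pairs in Proposition~\ref{prop:cases} is covered by some proposition above. It is routine bookkeeping and presents no real obstacle.
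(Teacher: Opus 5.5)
Your proposal is correct and matches how the paper obtains this corollary: it is stated as a direct summary of Propositions~\ref{prop:c1-minus-a-plus-2-negative}--\ref{prop:c1-minus-remaining} applied to the $22$ pairs of Proposition~\ref{prop:cases}, and your case assignment is exactly right. Your separate check that no split bundle can have $c_2=(1,0),(0,1),(1,1)$ is a harmless addition, since the proof of Proposition~\ref{prop:c1-minus-remaining} already shows $H^0(Q^4,\cE)=0$ in these three cases, which gives stability and hence non-splitting.
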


\section{Proof of the main theorem}

\begin{theorem}\label{thm:main}
Let $\cE$ be a rank $2$ weak Fano bundle on a smooth quadric hypersurface $Q^4$. Let \(H_{Q^4}\) denote the hyperplane class on \(Q^4\). Then, up to twisting with a line bundle, $\cE$ is one of the following:
\begin{enumerate}
\item a direct sum of two line bundles;
\item one of the two spinor bundles on $Q^4$;
\item an $H_{Q^4}$-stable rank $2$ bundle with $c_1=-1$ and $c_2=(1,1)$, that is, one of the bundles described in \cite[Example~2.2]{APW94}.
\end{enumerate}
\end{theorem}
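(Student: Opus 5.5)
The plan is to reduce to the normalized case and then read off the answer from the two corollaries of Sections~5 and~6.

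\textbf{Step 1: normalization.} Since $\Pic(Q^4)\simeq\Z[H_{Q^4}]$, write $c_1(\cE)=tH_{Q^4}$. Twisting by $\cO_{Q^4}(m)$ replaces $t$ by $t+2m$, so for a suitable $m$ the bundle $\cE':=\cE(m)$ has $c_1\in\{0,-1\}$, i.e.\ it is normalized. There is a canonical isomorphism $\P(\cE')\simeq\P(\cE)$ over $Q^4$, and under it the tautological divisor changes to $\xi+mH$. The anticanonical divisor $-K_{\P(\cE)}$ is intrinsic to the variety $\P(\cE)$, so it does not change. Hence $\cE'$ is again a rank $2$ weak Fano bundle.

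\textbf{Step 2: case $c_1(\cE')=0$.} Corollary~\ref{cor:c1-0-final} applies and shows that $\cE'$ is one of
\[
\cO_{Q^4}\oplus\cO_{Q^4},\qquad \cO_{Q^4}(1)\oplus\cO_{Q^4}(-1),\qquad \cO_{Q^4}(2)\oplus\cO_{Q^4}(-2).
\]
In every case $\cE'$ is a sum of two line bundles, so $\cE$ is in case~(1).

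\textbf{Step 3: case $c_1(\cE')=-1$.} Corollary~\ref{cor:c1-minus-final} applies. It gives either a split bundle, which is case~(1); or a spinor bundle, which is case~(2); or an $H_{Q^4}$-stable bundle with $c_2=(1,1)$ as in \cite[Example~2.2]{APW94}, which is case~(3).

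\textbf{Where the work lies.} This step is purely formal; all the content sits in the two corollaries. Those corollaries rest on Proposition~\ref{prop:cases} together with the case-by-case exclusions of Sections~5 and~6. So the only points to check here are that the cases in Proposition~\ref{prop:cases} are exhausted by the propositions of those sections, and that the twist in Step~1 preserves the weak Fano property.
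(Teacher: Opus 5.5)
Your proposal is correct and is exactly the paper's proof: normalize by a twist (your remark that $\P(\cE)$ and $-K_{\P(\cE)}$ are unchanged supplies the justification the paper leaves implicit), then read the list off Corollaries~\ref{cor:c1-0-final} and~\ref{cor:c1-minus-final}, whose supporting propositions do exhaust all cases of Proposition~\ref{prop:cases}. One caveat concerns those black boxes rather than your assembly: Proposition~\ref{prop:c1-0-22} obtains $H^q(Q^4,\cE(-2))=0$ from Theorem~\ref{thm:schneider}, which is false as stated (applied to $\cF=\cO_{Q^4}(2)\oplus\cO_{Q^4}(-2)$ and $\cL=\cO_{Q^4}(2)$ it would give $H^4(Q^4,\cO_{Q^4}(-4))=0$), but the case $c_1=0$, $c_2=(2,2)$ is still impossible, because the alternating pairing $\cE(-2)\otimes\cE(-2)\to\det\cE\otimes\cO_{Q^4}(-4)\simeq K_{Q^4}$ turns Serre duality into a nondegenerate alternating form on $H^2(Q^4,\cE(-2))$, so $\chi(Q^4,\cE(-2))=2h^0-2h^1+h^2$ is even, whereas Lemma~\ref{lem:RR-c1-0} gives $\chi(Q^4,\cE(-2))=1$.
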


\begin{proof}
After tensoring by a line bundle, we may assume that $\cE$ is normalized. If $c_1=0$, then $\cE$ is a split bundle by Corollary~\ref{cor:c1-0-final}. If $c_1=-1$, then Corollary~\ref{cor:c1-minus-final} gives the split bundles, the two spinor bundles, or the $H_{Q^4}$-stable bundles with $c_1=-1$ and $c_2=(1,1)$ described in \cite[Example~2.2]{APW94}. This proves the theorem.
\end{proof}

\subsection*{Acknowledgements}
The author is very grateful to his supervisor, Professor Kiwamu Watanabe, for helpful comments and many valuable discussions for this paper. This work was supported by JST SPRING, Japan Grant Number JPMJSP2170.

\bibliographystyle{amsalpha}
\bibliography{biblio}

\end{document}